\DeclareMathOperator*{\argmin}{arg\,min}
\DeclareMathOperator*{\argmax}{arg\,max}
\def\R{{\mathbb{R}}}
\newcommand{\norm}[1]{{ \left\lVert#1\right\rVert }}
\newcommand{\E}{\mathbb{E}}
\newcommand{\ignore}[1]{}
\def\bold0{\mathbf{0}}
\newtheorem*{rep@theorem}{\rep@title}
\newcommand{\newreptheorem}[2]{%
\newenvironment{rep#1}[1]{%
 \def\rep@title{#2 \ref{##1}}%
 \begin{rep@theorem}}%
 {\end{rep@theorem}}}
\newtheorem{theorem}{Theorem}
\renewcommand{\text}[1]{{\mathrm{#1}}}
\renewcommand{\text}[1]{{\textnormal{#1}}}
\DeclareMathOperator{\ranges}{ranges}
\DeclareMathOperator{\range}{range}
\newtheorem{lemma}[theorem]{Lemma}
\newtheorem{corollary}[theorem]{Corollary}
\newtheorem*{theorem*}{Theorem}
\newtheorem*{lemma*}{Lemma}
\newtheorem*{corollary*}{Corollary}
\newtheorem*{claim*}{Claim}
\newtheorem*{fact*}{Fact}
\newtheorem*{remark*}{Remark}
\theoremstyle{definition}
\newtheorem{definition}[theorem]{Definition}
\theoremstyle{definition}
\def\1{\mathbf{1}}
\def\1{\mathbf{1}}
\title{Importance Sampling via Local Sensitivity}
\author{Anant Raj  \\
  MPI for Intelligent Systems,\\
  T\"ubingen, Germany.\\
  \texttt{anant.raj@tuebingen.mpg.de} 
\and Cameron Musco \thanks{Most of this work was done when Cameron Musco was at Microsoft Research.}\\
  UMass Amherst, \\
  Amherst, USA. \\
  \texttt{cmusco@cs.umass.edu } \\
  \and Lester Mackey \\
  Microsoft Research, \\
  New England, USA. \\
  \texttt{lmackey@stanford.edu} 
}
\begin{document}
\maketitle
\begin{abstract} 
Given a loss function $F:\mathcal{X} \rightarrow \R^+$ that can be written as the sum of losses over a large set of inputs $a_1,\ldots, a_n$,  it is often desirable to approximate $F$ by subsampling the input points. Strong theoretical guarantees require taking into account the importance of each point, measured by how much its individual loss contributes to $F(x)$. Maximizing this importance over all $x \in \mathcal{X}$ yields the \emph{sensitivity score} of $a_i$. Sampling with probabilities proportional to these scores gives strong guarantees, allowing one to approximately minimize of $F$ using just the subsampled points.

Unfortunately, sensitivity sampling is difficult to apply since (1) it is unclear how to efficiently compute the sensitivity scores and (2) the sample size required is often impractically large. To overcome both obstacles we introduce \emph{local sensitivity}, which measures data point importance in a ball around some center $x_0$. We show that the local sensitivity can be efficiently estimated using the \emph{leverage scores} of a quadratic approximation to $F$ and that the sample size required to approximate $F$ around $x_0$ can be bounded. We propose employing local sensitivity sampling in an iterative optimization method  and analyze its convergence when $F$ is smooth and convex.

\end{abstract}

\section{Introduction}

In this work we consider finite sum minimization problems of the following form.
\begin{definition}[Finite Sum Problem]\label{def:fs} Given data points $a_1,\ldots, a_n \in \R^d$, nonnegative functions $f_1,\ldots, f_n: \R \rightarrow \R^+$, and a nonnegative function $\gamma: \R^d \rightarrow \R^+$, minimize over $x \in \mathcal{X} \subseteq \R^d$
\begin{align}\label{eq:fs}
    F(x) :=\frac{1}{n} \sum_{i=1}^n f_i(a_i^T  x) + \gamma(x).
\end{align}
\end{definition}
Definition \ref{def:fs} captures a number of important problems, including penalized empirical risk minimization (ERM) for linear regression, generalized linear models, and support vector machines.
When $n$ is large, minimizing $F(x)$ can be expensive. In some cases, for example, it may be impossible to load the full dataset $a_1,\ldots,a_n$ into memory.

\subsection{Function Approximation via Data Subsampling}
To reduce the burden of solving a finite sum problem, one commonly minimizes an approximation to $F$ formed by independently subsampling data points $a_i$ (and hence summands $f_i(a_i^T  x)$) with some fixed probability weights.
More formally:

\begin{definition}[Subsampled Finite Sum Problem]\label{def:subSampled}
Consider the setting of Definition \ref{def:fs}. Given a target sample size $m$ and a probability distribution $P = \{p_1,\ldots,p_n\}$ over $[n] \triangleq \{1,\dots,n\}$, select $i_1,...,i_m$ i.i.d.\ from $P$ and minimize over $x \in \mathcal{X} \subseteq \R^d$
 
\begin{align}
    F^{(P,m)}(x) := \frac{1}{mn} \sum_{j=1}^m \frac{ f_{i_j}(a_{i_j}^T x)}{p_{i_j}} + \gamma(x).
\end{align}
  
\end{definition}
We can see that for any $x$, $\E[F^{(P,m)}(x)] = F(x)$. If the sampled function concentrates well around $F(x)$, then it can serve effectively as a surrogate for minimizing $F$. 
Most commonly, $P$ is set to the uniform distribution. Unfortunately,
if $F(x)$ is dominated by the values of a relatively few large $f_i(a_i^T x)$, unless $m$ is very large, uniform subsampling will miss these important  data points and $F^{(P,m)}(x)$ will often underestimate $F(x)$. 
This can happen, for example, when $a_1,...,a_n$ fall into clusters of non-uniform size. Data points in smaller clusters are important in selecting an optimal $x$ but are often underrepresented in a uniform sample.

\subsection{Importance Sampling via Sensitivity}
A remedy to the weakness of uniform subsampling is to apply importance sampling: preferentially sample  the functions $f_i(a_i^T x)$ that  contribute most significantly to $F(x)$. If, for example, we set $p_i \propto \frac{f_i(a_i^T x)}{\sum_{i=1}^n f_i(a_i^T x) + \gamma(x)}$  for each $i \in [n]$, then a standard concentration argument would imply that $(1-\epsilon) F(x) \le F^{(P,m)}(x) \le (1+\epsilon) F(x)$ with probability  at least $\1-\delta$ if $m = \Theta \left (\frac{\log(1/\delta)}{\epsilon^2}\right)$. However, typically the relative the importance of each point, $\frac{f_i(a_i^T x)}{\sum_{i=1}^n f_i(a_i^T x)+\gamma(x)}$, will depend on the choice of $x$. This motivates the definition of  \emph{sensitivity}  \citep{langberg2010universal}.

\begin{definition}[Sensitivity]\label{def:sensitivity}
For $a_1,\ldots, a_n \in \R^d$, the \emph{sensitivity} of point $a_i$ with respect to a finite sum function $F$ (Definition \ref{def:fs}) with domain $\mathcal{X} \subseteq \R^d$ is
\begin{align*}
\sigma_{F,\mathcal{X}}(a_i) = \sup_{x \in \mathcal{X}} \frac{f_i(a_i^Tx)}{\sum_{j=1}^n f_j(a_j^T x) + n\gamma(x)}.
\end{align*}

The \textit{total sensitivity} is defined as $\mathcal{G}_{F,\mathcal{X}} = \sum_{i = 1}^n \sigma_{F,\mathcal{X}}(a_i)$.
\end{definition}

A standard concentration argument yields the following approximation guarantee for sensitivity sampling.
\begin{lemma}\label{intro:lem}
Consider the setting of Definition \ref{def:fs}. For all $i \in [n]$, let $s_i \geq \sigma_{F,\mathcal{X}}(a_i)$, $S = \sum_{i =1}^n s_i$, and $P = \left \{\frac{s_1}{S},\ldots,\frac{s_n}{S} \right \}$.
There is a fixed constant $c$ such that, for any $\epsilon,\delta \in (0,1)$, any fixed $x \in \mathcal{X}$, and $m \ge \frac{c \cdot S \log(2/\delta)}{\epsilon^2}$,
\begin{align*}
   (1-\epsilon) F(x) \le F^{(P,m)}(x) \le (1+\epsilon) F(x)
\end{align*}
 with probability $\ge 1-\delta$.
\end{lemma}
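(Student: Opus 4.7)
The plan is to view $F^{(P,m)}(x)$ as an empirical average of i.i.d.\ random variables with mean $F(x)$ and apply a standard concentration inequality. For a fixed $x \in \mathcal{X}$, define for each $j \in [m]$ the random variable
\[
  Y_j \triangleq \frac{f_{i_j}(a_{i_j}^T x)}{n \cdot p_{i_j}},
  \qquad \text{so that} \qquad
  F^{(P,m)}(x) = \frac{1}{m}\sum_{j=1}^m Y_j + \gamma(x).
\]
A direct calculation with $p_i = s_i/S$ gives $\E[Y_j] = \frac{1}{n}\sum_{i=1}^n f_i(a_i^T x)$, so $\E[F^{(P,m)}(x)] = F(x)$. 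The goal reduces to showing $\Pr\bigl[|\tfrac{1}{m}\sum_j Y_j - (F(x)-\gamma(x))| \ge \epsilon F(x)\bigr] \le \delta$.

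Next I would use the sensitivity bound to control both the range and the variance of $Y_j$. Since $s_i \ge \sigma_{F,\mathcal{X}}(a_i) \ge \frac{f_i(a_i^T x)}{\sum_j f_j(a_j^T x) + n\gamma(x)} = \frac{f_i(a_i^T x)}{n F(x)}$ for every $i$, we have $\frac{f_i(a_i^T x)}{s_i} \le n F(x)$. Substituting into the definitions yields the deterministic bound
\[
  Y_j \;\le\; \max_i \frac{S \, f_i(a_i^T x)}{n\, s_i} \;\le\; S\cdot F(x),
\]
and for the variance,
\[
  \Var(Y_j) \;\le\; \E[Y_j^2] \;=\; \frac{1}{n^2}\sum_{i=1}^n \frac{S\, f_i(a_i^T x)^2}{s_i}
  \;\le\; \frac{S\, F(x)}{n}\sum_{i=1}^n \frac{f_i(a_i^T x)}{n} \;\le\; S\cdot F(x)^2.
\]

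Finally, I would apply Bernstein's inequality to the centered i.i.d.\ sum $\frac{1}{m}\sum_j (Y_j - \E[Y_j])$ using the bounds $|Y_j - \E[Y_j]| \le S F(x)$ and $\Var(Y_j) \le S F(x)^2$. Setting the deviation to $\epsilon F(x)$, the exponent in Bernstein's bound evaluates (for $\epsilon \le 1$) to at least $m\epsilon^2 / (cS)$ for an absolute constant $c > 0$, so choosing $m \ge \frac{c\, S \log(2/\delta)}{\epsilon^2}$ makes the failure probability at most $\delta$. This yields $|F^{(P,m)}(x) - F(x)| \le \epsilon F(x)$ with probability at least $1-\delta$, as required.

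The only mildly delicate step is the second one: noticing that the sensitivity bound translates into a variance bound that is linear in $S$ rather than quadratic. Without this, one only obtains the suboptimal Hoeffding-type bound $m = \Omega(S^2 \log(1/\delta)/\epsilon^2)$; using Bernstein with the sharpened variance estimate is what delivers the linear-in-$S$ sample size stated in the lemma.
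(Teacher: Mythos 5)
Your proof is correct and is exactly the ``standard concentration argument'' the paper invokes without writing out: importance-weighted i.i.d.\ terms with mean $F(x)-\gamma(x)$, a range bound $Y_j \le S\,F(x)$ and variance bound $\Var(Y_j)\le S\,F(x)^2$ both derived from the pointwise sensitivity inequality $s_i \ge f_i(a_i^T x)/(n F(x))$, and Bernstein's inequality with deviation $\epsilon F(x)$. (There is a stray extra factor of $1/n$ in your intermediate variance expression, but the stated bound $\Var(Y_j)\le S\,F(x)^2$ follows correctly either way, and your closing observation that Hoeffding alone would only give $m=\Omega(S^2\log(1/\delta)/\epsilon^2)$ is exactly the right point.)
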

That is, subsampling data points by their sensitivities approximately preserves the value of $F$ \emph{for any fixed $x \in \mathcal{X}$} with high probability. It can thus be argued that  $F$ can be approximately minimized by minimizing the sampled function $F^{(P,m)}$. We first define:
\begin{definition}[Range Space]\label{def:rs}
A range space is a pair $\mathcal{R} = (\mathcal{F},\ranges)$, where $\mathcal{F}$ is a set and $\ranges$ is a set of subsets of $\mathcal{F}$. The VC dimension $\Delta(\mathcal{R})$ is the size of the largest $G \subseteq \mathcal{F}$ such that $G$ is shattered by $\ranges$: i.e., $|\{G \cap R | R \in \ranges \}| = 2^{|G|}$.

Let $\mathcal{F}$ be a finite set of functions mapping $\R^d \rightarrow \R^+$. For every $x \in \R^d$ and $r \in \R^+$, let $\range_{\mathcal{F}}(x,r) = \{ f \in \mathcal{F} | f(x) \ge r \}$ and $\ranges(\mathcal{F}) = \{\range_{\mathcal{F}}(x,r) | x \in \R^d, r \in \R^+\}$. We say $R_\mathcal{F} = (\mathcal{F},\ranges(\mathcal{F}))$ is the range space induced by $\mathcal{F}$.
\end{definition}

With the notion of range space in place, we can recall the following general approximation theorem.

\begin{theorem}[Theorem 9 \citep{munteanu2018coresets}]\label{thm:sensitivitySampling}
Consider the setting of Definition \ref{def:fs}. For all $i \in [n]$, let $s_i \geq \sigma_{F,\mathcal{X}}(a_i)$, $S = \sum_{i =1}^n s_i$, and  $P = \left \{\frac{s_1}{S},\ldots,\frac{s_n}{S} \right \}$. 
For some finite $c$ and all $\epsilon,\delta \in (0,1/2)$, if
\begin{align*}
m \ge c \cdot \frac{S}{\epsilon^2} \left( \Delta \log S + \log \left(\frac{1}{\delta} \right) \right),
\end{align*}
then,  with probability  at least $1 - \delta$, 
\begin{align*}
   (1-\epsilon) F(x) \le F^{(P,m)}(x) \le (1+\epsilon) F(x), \forall x \in \mathcal{X}
\end{align*}
Here, $\Delta$ is an upper bound on the VC-dimension $\Delta(\mathcal{R}_\mathcal{F})$ where $\mathcal{F}$ is the set  $\left \{\frac{f_1(a_1^T x)}{mn\cdot p_1}, \ldots, \frac{f_n(a_n^T x)}{mn\cdot p_n} \right \}$  .
\end{theorem}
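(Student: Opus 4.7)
The approach follows the Feldman–Langberg sensitivity-sampling framework: first establish pointwise concentration using a variance bound that follows from the sensitivity inequality, then promote it to a uniform bound over $\mathcal{X}$ via a VC-based covering/chaining argument on the level-set range space $\mathcal{R}_{\mathcal{F}}$.

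\textbf{Step 1 (Set-up and key uniform bound).} Let $Y_j(x) := \frac{f_{i_j}(a_{i_j}^T x)}{n \, p_{i_j}}$, so that $F^{(P,m)}(x) = \frac{1}{m}\sum_{j=1}^m Y_j(x) + \gamma(x)$ and $\E[Y_j(x)] = \frac{1}{n}\sum_{i=1}^n f_i(a_i^T x) = F(x) - \gamma(x)$. The sensitivity bound $s_i \geq \sigma_{F,\mathcal{X}}(a_i)$ combined with $p_i = s_i/S$ gives the crucial pointwise inequality
\[
Y_j(x) \;=\; \frac{S \cdot f_{i_j}(a_{i_j}^T x)}{n \, s_{i_j}} \;\leq\; \frac{S}{n}\Big(\sum_{k} f_k(a_k^T x) + n\gamma(x)\Big) \;=\; S \cdot F(x).
\]
Consequently $\E[Y_j(x)^2] \leq S F(x) \cdot \E[Y_j(x)] \leq S F(x)^2$, so $Y_j(x)$ is bounded by $S\,F(x)$ and has variance at most $S\,F(x)^2$ at every $x \in \mathcal{X}$.

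\textbf{Step 2 (Pointwise concentration via Bernstein).} For a fixed $x$, applying Bernstein's inequality to the i.i.d.\ variables $Y_1(x),\ldots,Y_m(x)$ with the range and variance bounds from Step 1 shows that $|F^{(P,m)}(x) - F(x)| \leq \epsilon F(x)$ with probability at least $1-\delta$ whenever $m = \Omega(S \log(1/\delta)/\epsilon^2)$. This already yields the linear-in-$S$ sample-complexity dependence; a Hoeffding-only argument would give $S^2$, so the variance bound from sensitivity is essential here.

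\textbf{Step 3 (Uniform convergence via the VC range space).} To pass from a fixed $x$ to all $x \in \mathcal{X}$ simultaneously, consider the range space $\mathcal{R}_{\mathcal{F}}$ from Definition \ref{def:rs} induced by $\mathcal{F} = \{ f_i(a_i^T \cdot)/(mn p_i) \}_{i=1}^n$, with VC dimension at most $\Delta$. Apply a standard uniform $\epsilon$-approximation theorem for VC classes (Haussler; Li–Long–Srinivasan relative approximation): with $m \geq c \cdot S(\Delta \log S + \log(1/\delta))/\epsilon^2$ samples, with probability $1-\delta$ and uniformly over all $x \in \mathcal{X}$ and all thresholds $r \geq 0$, the empirical and true measures of the level set $\{i : f_i(a_i^T x)/(n p_i) \geq r\}$ differ by a sufficiently small amount. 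Integrating this uniform level-set approximation via the layer-cake identity $Y = \int_0^\infty \1\{Y \geq r\}\,dr$, and using the sensitivity bound to cap the range of integration at $S \cdot F(x)$, converts the set-approximation guarantee into the desired multiplicative bound $|F^{(P,m)}(x) - F(x)| \leq \epsilon F(x)$ for all $x \in \mathcal{X}$.

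\textbf{Main obstacle.} The delicate step is Step 3: matching the optimal linear-$S$ dependence in the uniform bound rather than paying $S^2$ from a naive Hoeffding/union-bound argument. This requires pairing the Bernstein-type variance argument of Step 2 with a \emph{relative} VC approximation (so level-set discrepancies scale with the square root of the measure), and the extra $\log S$ factor comes from the standard covering-number overhead for VC classes. Everything else—the reduction, the Bernstein bound, and the layer-cake integration—is essentially routine once the sensitivity inequality is exploited to simultaneously control the range and the variance of $Y_j(x)$.
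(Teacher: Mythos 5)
This theorem is imported verbatim from \citet{munteanu2018coresets} and the paper supplies no proof of its own, so the only meaningful comparison is with the standard sensitivity-sampling argument of that reference (going back to Feldman--Langberg). Your plan reconstructs exactly that argument --- the sensitivity inequality simultaneously caps the range ($Y_j(x)\le S\,F(x)$) and the variance, Bernstein gives the pointwise bound of Lemma \ref{intro:lem}, and a relative VC $\varepsilon$-approximation over the level-set range space combined with layer-cake integration upgrades it to a uniform bound with linear rather than quadratic dependence on $S$ --- and you correctly isolate the one genuinely delicate step (avoiding the naive $S^2$ via the Li--Long--Srinivasan-type relative approximation, which is the source of the $\Delta\log S$ term); the proposal is therefore sound as a proof plan, with Step 3 left as a sketch rather than a fully worked argument.
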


\citet{munteanu2018coresets} show that $\Delta = d+1$ suffices for logistic regression where $d$ is the dimension of the input points. If all $f_i$ are from the class of invertible functions, then a similar bound on $\Delta$ can be expected.

\subsubsection{Barriers  to the Sensitivity Sampling in Practice}
Theorem \ref{thm:sensitivitySampling} is quite powerful: it can be used to achieve sensitivity-sampling-based approximation algorithms with provable guarantees for a wide range of problems \citep{feldman2011unified, lucic2015strong,huggins2016coresets,munteanu2018coresets}.
However, there are two major barriers that have hindered more widespread practical adoption of sensitivity  sampling:

\begin{enumerate}

     \item \textbf{Computability:} It is difficult to compute or even approximate the sensitivity $\sigma_{F,\mathcal{X}}(a_i)$ since it is not clear how to take the supremum over all $x \in \mathcal{X}$ in the expression of Definition \ref{def:sensitivity}. Closed form expressions for the sensitivity are known only  in a few special cases, such as least squares regression (where the sensitivity is closely  related to the well-studied \emph{statistical leverage scores}).

  \item \textbf{Pessimistic Bounds:} The sensitivity score is a very `worst  case' importance metric, since it considers the supremum of $\frac{f_i(a_i^T x)}{\sum_{j=1}^n f_j(a_j^T x) + n \gamma(x)}$ over all $x \in \mathcal{X}$, including, e.g., $x$ that may be very far from the true minimizer of $F$. In many cases, it is possible to construct, for each $a_i$, some worst case $x$ that forces this ratio to be high. Thus, all sensitivities are large and the total sensitivity $\mathcal{G}_{F,\mathcal{X}}$ is large. The sample complexities in Lemma \ref{intro:lem} and Theorem \ref{thm:sensitivitySampling} depend on $S \ge \mathcal{G}_{F,\mathcal{X}}$ and so will be too large to be useful in practice. See Figure \ref{fig:largeSensitivity} for a simple example of when this issue can arise.
\end{enumerate}
\subsection{Our Approach: Local Sensitivity}

We propose to overcome the above barriers via a simple idea: \emph{local sensitivity.} Instead of sampling with the sensitivity over the full domain $\mathcal{X}$ as in Definition \ref{def:sensitivity}, we consider the sensitivity  over a small ball. Specifically, for some radius $r$ and center $y$ we let $B(r,y) = \{x \in \R^d:\ \norm{x-y} < r\}$ and consider $\sigma_{F,\mathcal{X} \cap B(r,y)}(a_i)$.
Sampling by this local sensitivity  will give us a function $F^{(P,m)}$ that \emph{approximates $F$ well on the entire ball $B(r,y)$}. Thus, we can approximately minimize $F$ on this ball. We can approximately minimize $F$ globally via an iterative scheme: at each step we set $x_i$ to the approximate optimum of $F$ over the ball $B(r_i,x_{i-1})$ (computed via local sensitivity sampling). This approach has two major advantages:

\noindent 1. We can often locally approximate each $F$ by  a simple function, for which we can compute the local sensitivities in closed form. This will yield an approximation to the true local sensitivities. Specifically, we will consider a local quadratic approximation to $F$, whose sensitivities are given by the \emph{leverage scores} of an appropriate matrix.

\noindent 2. By definition, the local sensitivity $\sigma_{F,\mathcal{X} \cap B(r,y)}$ is \emph{always} upper bounded by the global sensitivity $\sigma_{F,\mathcal{X}}$, and typically the sum of local sensitivities will be much smaller than the total sensitivity $\mathcal{G}_{F,\mathcal{X}}$. This allows us to take fewer samples to approximately  minimize $F$ locally over $B(r,y)$.
\begin{figure}
\centering
\includegraphics[width=.52\textwidth]{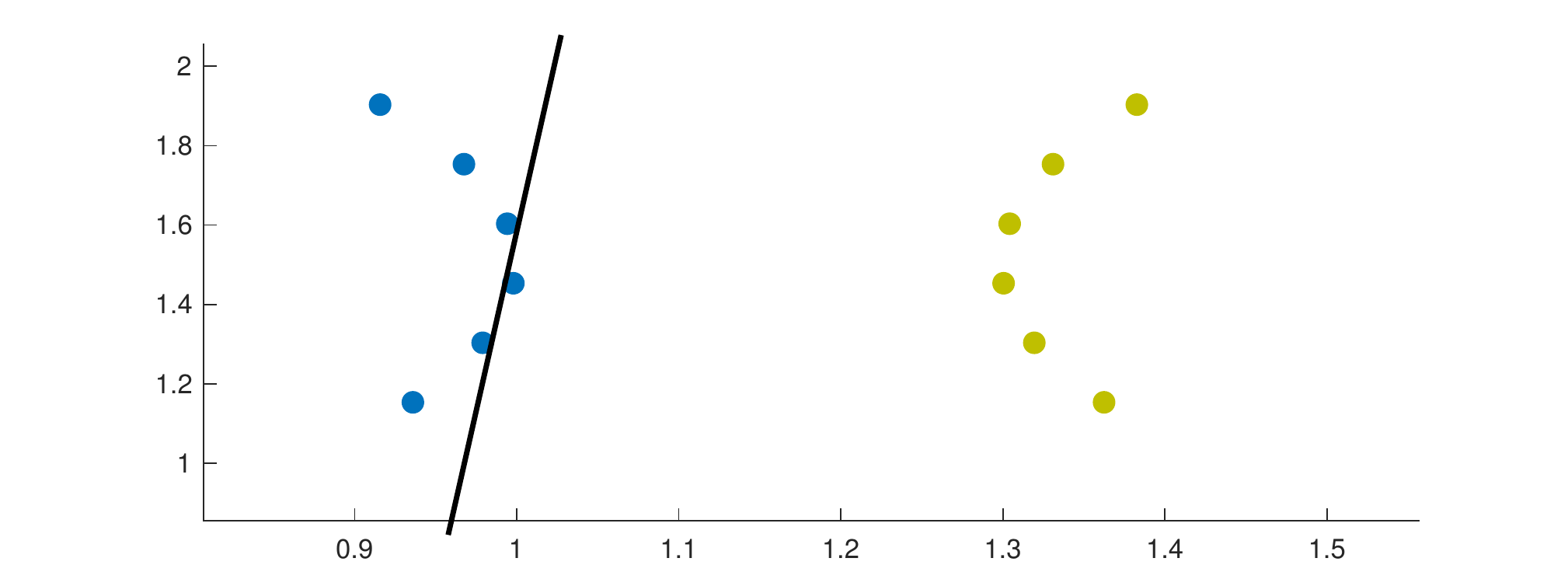}
\caption{Consider a classification problem with two classes $A_1,A_2$, shown in blue and green. Let $f_i(a_i^T x)$ be any loss function with $f_i(a_i^T x) = 0$ if $a_i$ is correctly classified by the hyperplane defined by $x$. Since for each $a_i$, there is some $x$ (e.g., corresponding to the black line shown) that misclassifies \emph{ only $a_i$}, we have $\sigma_{\mathcal{F},\R^d}(a_i) = 1$ for all $a_i$. Thus, the total sensitivity is $\mathcal{G}_{F,\mathcal{X}} = n$ and so the sampling results of Lemma \ref{intro:lem} and Theorem \ref{thm:sensitivitySampling} are vacuous -- they require sampling $\ge n$ points, even for this simple task.}\label{fig:largeSensitivity}
\end{figure}

\subsection{Related Work}

The sensitivity sampling framework has been successfully applied to a number of problems, including clustering \citep{feldman2011unified,lucic2015strong, bachem2015coresets}, logistic regression \citep{huggins2016coresets,munteanu2018coresets}, and least squares regression, in the form of leverage score sampling \citep{drineas2006sampling,mahoney2011randomized,cohen2015uniform}. In these works, upper bounds are given on the sensitivity of each data point, and it is shown that the sum of these bounds, and thus the required sample size for approximate optimization, is small. We aim to expand the applicability of sensitivity-based methods to functions for which a bound on the sensitivity cannot be obtained or for which the total sensitivity  is inherently large. 

The local-sensitivity-based iterative method that we will discuss is closely related to quasi-Newton methods \citep{dennis1977quasi}, especially those that approximate the Hessian via leverage score sampling \citep{xu2016sub,ye2017approximate}. In each iteration, we estimate local sensitivities by considering the sensitivities of a local quadratic approximation to $F$. As shown in Section \ref{sec:levrage_score}, these sensitivities can be bounded using the leverage scores of the Hessian, and thus our sampling probabilities are closely related to those used in the above works. Unlike a quasi-Newton method however, we use the sensitivities to directly optimize $F$ locally, rather than the quadratic approximation itself. In this way, our method is closer to a trust region method \citep{chen2018stochastic} or an approximate proximal point method \citep{frostig2015regularizing}. 

Recently, \citep{agarwal2017leverage}  and \citep{chowdhury2018iterative} have suggested iterative algorithms for regularized least squares regression and ERM for linear models that sample a subset of data  points by their leverage scores (closely related to sensitivities) in each step. These works employ this sampling in a different way than us, using the subsample to precondition each iterative step. While they give strong theoretical guarantees for the problems studied, this technique applies to a less general class of problems than our method.

The sensitivity scores for $\ell_2$ regression are commonly known as leverage scores, and a long line of work \citep[see, e.g.,]{rudi2018fast,altschuler2018massively} has focused on approximating these scores more quickly. These approximation techniques do not extend to general sensitivity score approximation however. Additionally, our paper in no way attempts to develop a faster algorithm for leverage score sampling. We focus on introducing the notion of local sensitivity, which allows leverage score based methods to be applied to optimization problems well beyond $\ell_2$ regression.
\subsection{Road Map}
Our contributions are presented as follows. In Section \ref{sec:levrage_score} we show that the sensitivity scores of a quadratic approximation to a function are given by the leverage scores of an appropriate matrix. We  use these scores to bound the local sensitivity scores of the true function. In Section \ref{sec:sampling_via_local} we discuss how to subsample using these approximate local sensitivities with the aim of approximately minimizing the function over a small ball. We describe how to use this approach to iteratively  optimize the function. In Section \ref{sec:convex} we give an analysis of this iterative method for convex functions.

\section{Leverage Scores as Sensitivities of Quadratic Functions}\label{sec:levrage_score}

We start by showing how to  approximate  the local sensitivity $\sigma_{F,\mathcal{X} \cap B(r,y)}$ over some ball by approximating $F$ with a quadratic function on this ball. $F$'s sensitivities can be approximated by those of this quadratic function, which we in turn bound in closed form by the leverage scores of an appropriate matrix (a rank-$1$ perturbation of $F$'s Hessian at $y$). The leverage scores are given by:
\begin{definition}[Leverage Scores \citep{alaoui2015fast,cohen2017input}]\label{def:leverage}
For any $C \in \R^{n \times p}$ with $i^{th}$ row $c_i$, the $i^{th}$ $\lambda$-ridge leverage score is the sensitivity of $F(z) = \norm{Cz}_2^2 + \lambda \norm{z}_2^2$:
\begin{align*}
\ell_i^\lambda(C):= \max_{\{z \in \mathbb{R}^p : \norm{z}_2> 0\}} \frac{[Cz]_i^2}{\norm{Cz}_2^2 +\lambda \norm{z}_2^2}.
\end{align*}
We have $\ell_i^\lambda(C) = c_i^T (C^T C+\lambda I)^{-1}c_i$. (See Lemma \ref{lem:leverage} in Appendix \ref{ap:sec2}).
\end{definition}
Our eventual iterative method will employ a proximal function, and thus in this section we consider this function, which reduces to $F$ when $\lambda = 0$:
\begin{definition}[Proximal Function]\label{def:prox}
For a function $F: \mathcal{X} \rightarrow \R$, define $F_{\lambda,y}(x) = F(x) + \lambda \norm{x-y}_2^2$.
\end{definition}

Using Definition \ref{def:leverage} and the associated Lemma \ref{lem:leverage} we establish the following in Appendix~\ref{ap:sec2}. 
\begin{theorem}[Sensitivity of Quadratic Approximation]\label{thm:quad}
Consider $F$ as in Def. \ref{def:fs} along with the quadratic approximation to the proximal function $F_{\lambda,y}$ (Def. \ref{def:prox}) around $y \in \mathcal{X}$.
If $A \in \mathbb{R}^{n\times d}$ is the data matrix with  $i^{th}$ row equal to $a_i$, then 
\begin{align}
\begin{split} \label{eq:thm_quad}
    &\tilde F_{\lambda,y}(x) := \frac{1}{n} \sum_{i=1}^n \left [f_i(a_i^T  y) + a_i^T (x-y) \cdot f'(a_i^T y)  \right. \left.  + \frac{1}{2} (a_i^T (x-y))^2 \cdot f''(a_i^T y)\right ] + \gamma(x) + \lambda \norm{x-y}_2^2\\
    &:=F(y) + (x-y)^T A^T \alpha_y + \frac{1}{2} (x-y)^T A^T H_y A (x-y)   + \gamma(x) + \lambda \norm{x-y}_2^2
    \end{split}
\end{align}
where $[\alpha_y]_i = \frac{1}{n} f_i'(a_i^T y)$, and $H_y$ is the diagonal matrix with $[H_y]_{i,i} = \frac{1}{n} f''(a_i^T y)$. Assuming that $H_y$ is nonnegative,
the sensitivity scores of $\tilde F_{\lambda,y}$ with respect to $B(r,y)$ can be bounded as
\begin{align}\label{eq:sensUpperBound}
\sigma_{\tilde F_{\lambda,y}, B(r,y)}(a_i) &\le \beta \cdot \ell_i^\lambda(C) + \frac{f_i(a_i^T y)}{\eta},
\end{align}
where $C = [H_y^{1/2} A, \frac{1}{\delta}H_y^{-1/2} \alpha_y]$, $\ell_i^\lambda(C)$ is the leverage score of Def. \ref{def:leverage}, $\displaystyle \eta = \min_{x \in B(r,y)} \tilde F_{\lambda,y}(x)$, $\displaystyle \delta = \min_{x \in B(r,y)} \gamma(x)$, and $\beta = \max \bigg (1, 1 - \frac{F(y) - \frac{1}{n} \sum_{i=1}^n \frac{f'(a_i^T y)^2}{4 f''(a_i^T y)}}{\eta} \bigg )$.
\end{theorem}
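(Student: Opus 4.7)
The theorem has two components: deriving the matrix form of the quadratic approximation $\tilde F_{\lambda,y}$ and bounding its sensitivities by the leverage scores of $C$. For the first component, I would expand each loss $f_i(a_i^T x)$ to second order in the scalar $a_i^T x$ about $a_i^T y$, yielding the first line of \eqref{eq:thm_quad}; the matrix form then follows from $\tfrac{1}{n}\sum_i a_i^T(x-y)\,f_i'(a_i^T y) = (x-y)^T A^T\alpha_y$ and $\tfrac{1}{n}\sum_i (a_i^T(x-y))^2 f_i''(a_i^T y) = (x-y)^T A^T H_y A(x-y)$, with $\alpha_y$ and $H_y$ defined to make these rewrites exact. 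The constant $\tfrac{1}{n}\sum_i f_i(a_i^T y)$ is absorbed into the symbol $F(y)$; the additive constant plays no role in the sensitivities, which are ratios.

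For the sensitivity bound I would use a completion-of-the-square identity. Writing $s_i := a_i^T(x-y)$, a short calculation yields
\begin{align*}
\tilde f_i(x) := f_i(a_i^T y) + s_i f_i'(a_i^T y) + \tfrac{1}{2} s_i^2 f_i''(a_i^T y) = u_i + \tfrac{f_i''(a_i^T y)}{2}\Bigl(s_i + \tfrac{f_i'(a_i^T y)}{f_i''(a_i^T y)}\Bigr)^2,
\end{align*}
where $u_i := f_i(a_i^T y) - \tfrac{f_i'(a_i^T y)^2}{2 f_i''(a_i^T y)}$. Defining the augmented vector $w(x) := (x-y,\ \delta) \in \R^{d+1}$, a direct computation with $C = [H_y^{1/2} A,\ \tfrac{1}{\delta} H_y^{-1/2}\alpha_y]$ reveals that $\tilde f_i(x) = u_i + \tfrac{n}{2}[Cw(x)]_i^2$: the factor $\tfrac{1}{\delta}$ in the extra column of $C$ is precisely what cancels against the appended coordinate $\delta$ of $w(x)$ to reproduce the linear correction $f_i'/f_i''$. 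Summing over $i$ gives the auxiliary identity $\sum_i \tilde f_i(x) = \sum_i u_i + \tfrac{n}{2}\|Cw(x)\|_2^2$.

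I then split the supremum defining $\sigma_{\tilde F_{\lambda,y},B(r,y)}(a_i)$ into a constant piece and a quadratic piece:
\begin{align*}
\sigma_{\tilde F_{\lambda,y},B(r,y)}(a_i) \le \sup_{x \in B(r,y)} \frac{u_i}{n\tilde F_{\lambda,y}(x)} + \sup_{x \in B(r,y)} \frac{[Cw(x)]_i^2}{2\tilde F_{\lambda,y}(x)}.
\end{align*}
The first supremum is at most $f_i(a_i^T y)/\eta$ by using $u_i \le f_i(a_i^T y)$ (valid since $f_i''(a_i^T y)\ge 0$) together with $\tilde F_{\lambda,y}(x) \ge \eta$; this produces the additive term of \eqref{eq:sensUpperBound}. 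For the second, I invoke the leverage-score characterization $[Cw(x)]_i^2 \le \ell_i^\lambda(C)\bigl(\|Cw(x)\|_2^2 + \lambda\|w(x)\|_2^2\bigr)$, and then apply the auxiliary identity to rewrite $\|Cw(x)\|_2^2 = 2(\tilde F_{\lambda,y}(x) - \gamma(x) - \lambda\|x-y\|_2^2 - \bar u)$ with $\bar u := \tfrac{1}{n}\sum_i u_i$. This reduces matters to bounding $\sup_{x \in B(r,y)} (\|Cw(x)\|_2^2 + \lambda\|w(x)\|_2^2)/(2\tilde F_{\lambda,y}(x))$ by $\beta$.

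The main obstacle is this final ratio bound. After substitution it simplifies to $1 - (\gamma(x) + \tfrac{\lambda}{2}\|x-y\|_2^2 - \tfrac{\lambda\delta^2}{2} + \bar u)/\tilde F_{\lambda,y}(x)$. If the bracketed correction is nonnegative throughout $B(r,y)$ we obtain a bound of $1$ (the first branch of the $\max$ defining $\beta$). Otherwise, combining the lower bounds $\tilde F_{\lambda,y}(x)\ge\eta$, $\gamma(x)\ge\delta$, and $\|x-y\|_2^2\ge 0$, and absorbing the $\lambda\delta^2$ correction against $\gamma\ge\delta$, yields the second branch of $\beta$ after expanding $\bar u$ back in terms of $F(y)$ and $\tfrac{1}{n}\sum_i f_i'(a_i^T y)^2/f_i''(a_i^T y)$. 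The delicate book-keeping of these constants, in particular handling the case when $\bar u$ is negative, is the technical heart of the proof.
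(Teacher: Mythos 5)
Your proposal follows essentially the same route as the paper's proof: complete the square in each summand so that the quadratic part becomes $\tfrac12[C\bar z]_i^2$ for the augmented vector $\bar z=(x-y,\delta)$, bound the leftover constant term by $f_i(a_i^T y)/\eta$ using $f_i''\ge 0$ and $\tilde F_{\lambda,y}\ge\eta$, and obtain the factor $\beta$ from the same two-case analysis of whether the aggregate completed-square constant is nonnegative. The only discrepancies are bookkeeping slips that originate in the paper rather than in your argument---your completion-of-the-square constant $(f_i')^2/(2f_i'')$ and the sign $+\delta$ on the appended coordinate are the arithmetically correct versions of the paper's $(f_i')^2/(4f_i'')$ and $-\delta$, and you are more explicit about absorbing the $\lambda\delta^2$ correction against $\gamma\ge\delta$, a step the paper glosses over.
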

Note that if we consider a small enough ball, where $\tilde F_{\lambda,y}$ well approximates $F_{\lambda,y}$, we expect $\displaystyle \eta =  \min_{x \in B(r,y)} \tilde F_{\lambda,y}(x)  = \Theta(F(y))$. Thus, the additive $\frac{f_i(a_i^T y)}{\eta}$ term on each sensitivity will contribute only a $\frac{\sum f_i(a_i^T y)}{\Theta(F(y))} = O(1)$ additive factor to the total sensitivity bound and sample size.

\subsection{Efficient Computation of Leverage Score Sensitivities}
The sensitivity upper bound \eqref{eq:sensUpperBound} of Theorem \ref{thm:quad} can be approximated efficiently as long as we can efficiently approximate the leverage scores 
$
\ell_i^\lambda(C) = c_i^T (C^T C + \lambda I)^{-1} c_i, $
where $C = [H_y^{1/2} A, \frac{1}{\delta}H_y^{-1/2} \alpha_y]$. We can use a block matrix inversion formula to find that
\begin{align*}
&(C^T C + \lambda I)^{-1} = \begin{bmatrix} A^T H_y A + \lambda I & \frac{1}{\delta}A^T \alpha_y \\ \frac{1}{\delta}\alpha_y^T A & \norm{\alpha_y}_2^2 + \lambda \end{bmatrix}^{-1}  \  = \begin{bmatrix} A_1 & A_2 \\ A_2^\top & \frac{1}{k} \end{bmatrix}
\end{align*}
where  $A_1  =(A^T H_y A + \lambda I)^{-1} + \frac{1}{k} (A^T H_y A + \lambda I)^{-1}  A^T \alpha_y \alpha_y^T A (A^T H_y A + \lambda I)^{-1}$, $k = \norm{\alpha_y}_2^2 + \delta^2 \lambda - \alpha_y^T A(A^T H_y A + \lambda I)^{-1} A^T \alpha_y$ ,   and  $A_2=- \frac{\delta}{k} (A^T H_y A + \lambda I)^{-1} A^T \alpha_y$.  

Thus, if we have a fast algorithm for applying $(A^T H_y A + \lambda I)^{-1}$ to a vector we can quickly apply  $(C^T C + \lambda I)^{-1} $ to a vector and compute the leverage scores $\ell_i^\lambda(C) = c_i^T (C^T C + \lambda I)^{-1} c_i$. Via standard Johnson-Lindenstrauss sketching techniques \citep{spielman2011graph}  it in fact suffices to apply this inverse to $O(\log n/\delta)$ vectors to approximate each score up to constant factor with probability $\ge 1-\delta$. In practice, one can use traditional iterative methods such as conjugate gradient,  iterative sampling methods such as those presented in \citep{cohen2015uniform,cohen2017input}, or  fast sketching methods \citep{drineas2012fast, clarkson2017low}.

\subsection{True Local Sensitivity from Quadratic Approximation}\label{sec:local}
As long as the quadratic approximation $\tilde F_{\lambda,y}$ approximates $F_{\lambda,y}$ sufficiently well on the ball $B(r,y)$, we can use Theorem \ref{thm:quad} to approximate the true local sensitivity ${\sigma}_{ F_{\lambda,y},\mathcal{X} \cap B(r,y)}(a_i)$. We start by discussing our approximation assumptions. 

Defining $\alpha_y$ as in Theorem \ref{thm:quad}, for some $B_y(x)$ which itself is a function of $x$ we have:
\begin{align*}
F(x) &= F(y) + (x - y)^\top A^\top \alpha_y + (x - y)^\top  A^\top H_y A (x - y)  + \gamma(x) + B_y(x)\| x -y \|_2^{3}. 
\end{align*}
Without loss of generality, we assume that $B_y(x)>0$ for $x$ in the above equation or we just shift the overall function vertically by adjusting $\gamma(\cdot)$ to have the quadratic appropriator be an under approximation of the true function. If the function $F$ has a $C$ Lipschitz-Hessian then we have:
\begin{align}\label{eq:cApprox}
F(x) &\leq F(y) + (x - y)^\top A^\top \alpha_y + (x - y)^\top  A^\top H_y A (x - y)  + \gamma(x) +\frac{C}{6} \|x - y \|_2^3.
\end{align}
  
For simplicity, we also assume that \eqref{eq:cApprox} holds componentwise with Lipschitz Hessian constant $C_i$ for $i \in [n]$. Adding the second order approximation of $F(x)$ to $\lambda \|x - y \|_2^2$ gives the approximate function $\tilde F_{\lambda,y} (x)$ as defined in ~\eqref{eq:thm_quad}. Theorem~\ref{thm:quad} shows how to bound the sensitivities of $\tilde F_{\lambda,y} (x)$. Using \eqref{eq:cApprox} we prove a bound on the local sensitivities of $F_{\lambda,y} (x)$ itself in Appendix~\ref{ap:true_local}:
\begin{theorem}\label{thm:local_true_sens}
Consider $F_{\lambda,y}$ as in Defs. \ref{def:fs}, \ref{def:prox}, $y \in \mathcal{X}$, a radius $r$, and $\displaystyle \alpha = \min_{x \in B(r,y)} F_{\lambda,y}(x)$. Then, $\forall ~ i \in [n]$,
\begin{align*}
{\sigma}_{ F_{\lambda,y},B(r,y)}(a_i) \le {\sigma}_{\tilde F_{\lambda,y},B(r,y)}(a_i) + \min \left (\frac{C_i r}{6n\lambda}, \frac{C_i r^3}{6n \alpha}  \right).
\end{align*}
  \end{theorem}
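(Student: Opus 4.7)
The plan is to pick a near-optimizer $x^{\ast}$ of the supremum defining $\sigma_{F_{\lambda,y},B(r,y)}(a_i)$, split the numerator via the componentwise Lipschitz-Hessian inequality (the componentwise version of \eqref{eq:cApprox}), and split the denominator via the under-approximation property already assumed in the text. One piece will collapse into $\sigma_{\tilde F_{\lambda,y},B(r,y)}(a_i)$, and the other is a cubic error term that we will bound in two complementary ways to obtain the $\min(\cdot,\cdot)$ in the statement.

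The first step is a bookkeeping rewrite. By Definitions \ref{def:fs} and \ref{def:prox}, the denominator in Definition \ref{def:sensitivity} applied to $F_{\lambda,y}$ is exactly $nF_{\lambda,y}(x) = \sum_j f_j(a_j^T x) + n\gamma(x) + n\lambda\|x-y\|_2^2$, and analogously for $\tilde F_{\lambda,y}$. Hence
\begin{align*}
\sigma_{F_{\lambda,y},B(r,y)}(a_i) \;=\; \sup_{x\in B(r,y)} \frac{f_i(a_i^T x)}{nF_{\lambda,y}(x)}, \qquad \sigma_{\tilde F_{\lambda,y},B(r,y)}(a_i) \;=\; \sup_{x\in B(r,y)} \frac{\tilde f_i(a_i^T x)}{n\tilde F_{\lambda,y}(x)}.
\end{align*}
Let $x^{\ast} \in B(r,y)$ be a point that achieves, or approaches, the supremum on the left.

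Next I combine two ingredients at $x^{\ast}$. The componentwise Lipschitz-Hessian bound gives $f_i(a_i^T x^{\ast}) \le \tilde f_i(a_i^T x^{\ast}) + \tfrac{C_i}{6}\|x^{\ast}-y\|_2^3$; the global under-approximation (which is the point of the WLOG $\gamma$-shift the authors take earlier, and which preserves ratios because the same shift enters $F_{\lambda,y}$ and $\tilde F_{\lambda,y}$ identically) gives $\tilde F_{\lambda,y}(x^{\ast}) \le F_{\lambda,y}(x^{\ast})$. Chaining these and dividing by $nF_{\lambda,y}(x^{\ast})$,
\begin{align*}
\sigma_{F_{\lambda,y},B(r,y)}(a_i) \;=\; \frac{f_i(a_i^T x^{\ast})}{nF_{\lambda,y}(x^{\ast})} \;\le\; \frac{\tilde f_i(a_i^T x^{\ast})}{nF_{\lambda,y}(x^{\ast})} + \frac{C_i \|x^{\ast}-y\|_2^3}{6nF_{\lambda,y}(x^{\ast})} \;\le\; \sigma_{\tilde F_{\lambda,y},B(r,y)}(a_i) + \frac{C_i \|x^{\ast}-y\|_2^3}{6nF_{\lambda,y}(x^{\ast})}.
\end{align*}
The last step is a short case split on the sign of $\tilde f_i(a_i^T x^{\ast})$: if it is nonnegative, the under-approximation $\tilde F_{\lambda,y}(x^{\ast}) \le F_{\lambda,y}(x^{\ast})$ pushes $\tilde f_i/(nF_{\lambda,y}) \le \tilde f_i/(n\tilde F_{\lambda,y}) \le \sigma_{\tilde F_{\lambda,y},B(r,y)}(a_i)$; if it is negative, the ratio is nonpositive and is trivially dominated by the nonnegative sensitivity.

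Finally, to get the $\min$ it suffices to bound the cubic error term two different ways. Using $F_{\lambda,y}(x^{\ast}) \ge \alpha$ by definition of $\alpha$, together with $\|x^{\ast}-y\|_2 \le r$, yields $C_i\|x^{\ast}-y\|_2^3/(6nF_{\lambda,y}(x^{\ast})) \le C_i r^3/(6n\alpha)$. Alternatively, since $f_j \ge 0$ and $\gamma \ge 0$, we have $F_{\lambda,y}(x^{\ast}) \ge \lambda \|x^{\ast}-y\|_2^2$, and one factor of $\|x^{\ast}-y\|_2^2$ cancels, leaving $C_i\|x^{\ast}-y\|_2/(6n\lambda) \le C_i r/(6n\lambda)$. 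Taking the smaller of the two bounds produces exactly $\min(C_i r/(6n\lambda),\, C_i r^3/(6n\alpha))$. The main subtlety to get right is the sign case split for $\tilde f_i(a_i^T x^{\ast})$; everything else is an application of the supremum, Taylor's theorem at the component level, and two elementary lower bounds on $F_{\lambda,y}(x^{\ast})$.
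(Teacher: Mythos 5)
Your proposal is correct and follows essentially the same route as the paper's proof: decompose $f_i$ into its quadratic approximation plus a cubic remainder bounded by $C_i\|x-y\|^3/6$, absorb the first term into $\sigma_{\tilde F_{\lambda,y},B(r,y)}(a_i)$ using the under-approximation $\tilde F_{\lambda,y}\le F_{\lambda,y}$, and bound the remainder term by the two lower bounds $F_{\lambda,y}(x)\ge\alpha$ and $F_{\lambda,y}(x)\ge\lambda\|x-y\|_2^2$ to get the $\min$. Your explicit sign case split on $\tilde f_i(a_i^T x^{\ast})$ is a slightly more careful treatment of a step the paper handles implicitly, but it does not change the argument.
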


Using this sensitivity  bound, we can independently sample components with the computed scores as in Definition \ref{def:subSampled}, obtaining a  $(1+\epsilon)$ approximation of the function ${F}_{\lambda,y}(x)$. That is, letting $ F_{\lambda,y}^s (x)$ represent the subsampled empirical loss function (sampled as in Theorem \ref{thm:sensitivitySampling}), for $\tilde O \left (\frac{\Delta}{\epsilon^2}\right )$ samples, we have $ F_{\lambda,y}^s (x) \in (1\pm\epsilon) F_{\lambda,y} (x)~\forall ~x \in{B}(y,R)$ with high probability.

\section{Optimization  via Local Sensitivity Sampling}\label{sec:sampling_via_local}
In Theorem \ref{thm:local_true_sens} we showed how to bound the local sensitivities of a function $F :=  \sum_{i=1}^n f_i(a_i^T  x) + \gamma(x)$ using the local sensitivities of a quadratic approximation to $F$, which are given by  the leverage scores of an appropriate matrix (Theorem \ref{thm:quad}). These sensitivities are only valid in a sufficiently small ball around some starting point $y$, roughly, where the quadratic approximation is accurate. In this section we show how they can be used to optimize $F$ beyond this ball, specifically as part  of an iterative method that locally optimizes $F$ until convergence to a global optimum.

In the optimization literature, there are two popular techniques that iteratively optimize a function via local optimizations over a ball: (i) trust region methods \citep{conn2000trust} and (ii) proximal point methods \citep{parikh2014proximal}. 
Local sensitivity sampling can be combined with both of these classes of methods. We first focus on proximal point methods, discussing a related trust region approach in Section \ref{sec:trust}.
 In the proximal point method, the idea is in each step to approximate a regularized minimum:
\begin{align}
\begin{split} \label{eq:ppm_optima}
x^\star_{\lambda_t,y} = \argmin F_{\lambda_t,y}(x)& = \argmin \left[ F(x) + {\lambda_t} \| x -y \|_2^2 \right] \\
 \text{ and } F^{\star}_{\lambda_t,y} &= F_{\lambda_t,y}(x_{\lambda_t,y}^\star).
 \end{split}
\end{align}
Here $\lambda_t$ is a regularization parameter depending on the iteration $t$.
As discussed below,  minimizing this regularized function is equivalent to minimizing $F$ on a ball of a given radius.

\subsection{Equivalence between Constrained and Penalized Formulation}
When $F$ is convex it is well known that for any $\lambda$ minimizing the proximal function $F_{\lambda,y}$ is equivalent to minimizing $F$ constrained to some ball around $y$. 
Consider the constrained optimization problem given in equation~\eqref{eq:ppm_equation_constrained} where ${B}({r},y)$ is the ball of radius ${r}$ centered at $y$:
\begin{align}
\begin{split}
 x_{r,y}^\star = \argmin_{x \in {B}(r,y)}     F(x)  . \label{eq:ppm_equation_constrained}
\end{split}
\end{align}
  
\begin{lemma}\label{lem:Equivalence}
Let  $x^\star = \argmin_{x \in \mathbb{R}^d}     F(x) $ for a convex function $F$. If $x^\star$ does not lie inside ${B}(r,y)$ then $x_{r,y}^\star$ also solves the following optimization problem:
\begin{align}
x_{{ r},y}^\star = \argmin_{x \in \mathbb{R}^d} ~ F(x) +  \frac{\|\nabla F(x_{{ r},y}^\star) \|}{2{ r}} \cdot  \| x - y \|_2^2   \label{eq:penalized_ppm_eq}.
\end{align}
\end{lemma}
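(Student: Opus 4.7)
The argument is a standard KKT/Lagrangian identification, with the key observation that, since the unconstrained minimizer $x^\star$ lies outside $B(r,y)$, the constrained minimizer $x_{r,y}^\star$ must sit on the boundary $\{x:\|x-y\|=r\}$. I will first establish this boundary claim, then extract a Lagrange multiplier whose value is exactly $\|\nabla F(x_{r,y}^\star)\|/(2r)$, and finally show that $x_{r,y}^\star$ is a stationary point of the convex proximal objective, which by convexity makes it a global minimizer.

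\textbf{Step 1: the optimum is on the boundary.} By convexity of $F$, the set of unconstrained minimizers is convex and contains $x^\star \notin B(r,y)$. If $x_{r,y}^\star$ were in the interior of $B(r,y)$, then (again by convexity) the segment from $x_{r,y}^\star$ to any $x^\star$ would contain points strictly inside the ball on which $F$ strictly decreases, contradicting the definition of $x_{r,y}^\star$. Hence $\|x_{r,y}^\star - y\|_2 = r$. (Implicit here is that we treat $B(r,y)$ as the closed ball so that the constrained minimum is attained; the same proof works for the open ball via a limiting argument since $F$ is convex hence continuous.)

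\textbf{Step 2: identify the Lagrange multiplier.} Apply the KKT conditions to the convex problem $\min_{x : \|x-y\|_2^2 \le r^2} F(x)$. The Slater condition holds trivially, so there exists $\mu \ge 0$ with
\begin{equation*}
\nabla F(x_{r,y}^\star) + 2\mu (x_{r,y}^\star - y) = 0, \qquad \mu \bigl(\|x_{r,y}^\star - y\|_2^2 - r^2\bigr) = 0.
\end{equation*}
Taking norms in the stationarity equation and using $\|x_{r,y}^\star - y\|_2 = r$ from Step~1 gives $\|\nabla F(x_{r,y}^\star)\|_2 = 2\mu r$, i.e.\ $\mu = \|\nabla F(x_{r,y}^\star)\|_2 / (2r)$. (Since $x^\star \notin B(r,y)$, $x_{r,y}^\star$ is not itself a critical point of $F$, so $\mu>0$ and the identification is well-defined.)

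\textbf{Step 3: conclude via convexity.} Let $g(x) := F(x) + \mu \|x-y\|_2^2$ with $\mu$ as above. Then $g$ is convex as the sum of convex functions, and Step~2 gives $\nabla g(x_{r,y}^\star) = \nabla F(x_{r,y}^\star) + 2\mu(x_{r,y}^\star - y) = 0$. For a convex function, any stationary point is a global minimizer on $\mathbb{R}^d$, so $x_{r,y}^\star = \argmin_{x \in \mathbb{R}^d} g(x)$, which is exactly the claimed identity~\eqref{eq:penalized_ppm_eq}.

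\textbf{Main obstacle.} The argument is routine; the only subtle points are (i) justifying that the constrained optimum is attained on the boundary of a ball that is written as open in the paper, and (ii) matching the unsigned norm $\|\nabla F(x_{r,y}^\star)\|$ to the Lagrange multiplier, which requires knowing the multiplier is strictly positive — this in turn follows from $x^\star \notin B(r,y)$, since if $\mu=0$ then $\nabla F(x_{r,y}^\star)=0$, contradicting that $x_{r,y}^\star$ is on the boundary rather than at $x^\star$.
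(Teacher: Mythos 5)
Your proof is correct and follows essentially the same route as the paper's: both identify that the constrained optimum lies on the boundary, extract the Lagrange multiplier $\|\nabla F(x_{r,y}^\star)\|/(2r)$ from the KKT stationarity condition combined with $\|x_{r,y}^\star-y\|=r$, and conclude that $x_{r,y}^\star$ minimizes the penalized objective. Your version is somewhat more careful than the paper's (you justify the boundary claim, the strict positivity of the multiplier, and the final passage from stationarity to global optimality via convexity), but the underlying argument is the same.
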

Comparing equations~\eqref{eq:ppm_optima} and \eqref{eq:penalized_ppm_eq}, se see that  $\lambda = \frac{\|\nabla F(x_{r,y}^\star) \|}{{ 2r}} \Rightarrow { r} = \frac{\|\nabla F(x_{r,y}^\star) \|}{2\lambda}$  .  While it is not directly possible to compute radius $r$ in closed form without computing $x_{{ r},y}^\star$ itself, we can give a computable upper bound on $r$ which will be crucial for our analysis. 

\begin{lemma} \label{lem:radius_bound}
Consider the optimization problem~\eqref{eq:ppm_optima} and its corresponding constrained  counterpart ~\eqref{eq:ppm_equation_constrained} where $F$ is a $\mu$ strongly convex function. Then, $x_{\lambda,y}^\star$ falls within a ball of radius $r = \frac{\| \nabla F(y) \|}{2\lambda + \mu}$ around $y$.
\end{lemma}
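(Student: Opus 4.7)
The plan is to exploit strong convexity of the proximal objective $g(x) := F_{\lambda,y}(x) = F(x) + \lambda \|x - y\|_2^2$ together with the first-order optimality condition at its minimizer $x_{\lambda,y}^\star$. Since $F$ is $\mu$-strongly convex and the quadratic regularizer $\lambda \|x-y\|_2^2$ is $2\lambda$-strongly convex, the sum $g$ is $(\mu + 2\lambda)$-strongly convex. This is the only structural fact about $F_{\lambda,y}$ I will use.

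First I would record the first-order optimality condition $\nabla g(x_{\lambda,y}^\star) = 0$, and observe that $\nabla g(y) = \nabla F(y) + 2\lambda(y-y) = \nabla F(y)$, so $g$ and $F$ have the same gradient at the center $y$. Next I would invoke the standard gradient-monotonicity consequence of $(\mu+2\lambda)$-strong convexity:
\begin{align*}
\langle \nabla g(y) - \nabla g(x_{\lambda,y}^\star),\; y - x_{\lambda,y}^\star \rangle \;\ge\; (\mu + 2\lambda)\,\|y - x_{\lambda,y}^\star\|_2^2.
\end{align*}
Substituting $\nabla g(x_{\lambda,y}^\star) = 0$ and $\nabla g(y) = \nabla F(y)$ and applying Cauchy--Schwarz to the left-hand side yields
\begin{align*}
\|\nabla F(y)\|_2 \cdot \|y - x_{\lambda,y}^\star\|_2 \;\ge\; (\mu + 2\lambda)\,\|y - x_{\lambda,y}^\star\|_2^2,
\end{align*}
so dividing through by $\|y - x_{\lambda,y}^\star\|_2$ (the case $y = x_{\lambda,y}^\star$ being trivial) gives the claimed bound $\|y - x_{\lambda,y}^\star\|_2 \le \|\nabla F(y)\|_2/(\mu + 2\lambda)$.

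There is essentially no hard step here: the argument is two lines once one notices that the regularizer inherits $2\lambda$-strong convexity and contributes nothing to the gradient at $y$. The only minor subtlety worth checking is the strong-convexity bookkeeping (that strong convexity moduli add when functions are summed), which is standard. Should one prefer to avoid quoting gradient monotonicity, an equivalent route is to apply the strong-convexity inequality $g(y) \ge g(x_{\lambda,y}^\star) + \langle \nabla g(x_{\lambda,y}^\star), y - x_{\lambda,y}^\star\rangle + \tfrac{\mu+2\lambda}{2}\|y - x_{\lambda,y}^\star\|_2^2$ and combine it with the reverse inequality from $x_{\lambda,y}^\star$'s optimality ($g(y) \ge g(x_{\lambda,y}^\star)$), but the gradient-monotonicity form above is the cleanest.
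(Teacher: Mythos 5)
Your proof is correct and uses the same key ingredient as the paper's: the $(\mu+2\lambda)$-strong convexity of $F_{\lambda,y}$ combined with $\nabla F_{\lambda,y}(y)=\nabla F(y)$ and $\nabla F_{\lambda,y}(x^\star_{\lambda,y})=0$, yielding $\|\nabla F(y)\|\ge(\mu+2\lambda)\|y-x^\star_{\lambda,y}\|$. Your route is in fact slightly more direct, since the paper reaches the same bound after a detour through $\|\nabla F(x^\star_{\lambda,y})\|=2\lambda\|x^\star_{\lambda,y}-y\|$ and the radius formula of Lemma~\ref{lem:Equivalence}.
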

Proofs for this sections are provided in the Appendix~\ref{app:constPenalized}.

Using the local sensitivity  bound of Section \ref{sec:local} we can approximate $F_{\lambda,y}$ on a ball of small enough radius. In applying sensitivity sampling to a proximal point method, it will be critical to ensure that $\lambda_{t}$ is not too small. This will ensure that, by Lemma \ref{lem:radius_bound}, $x^\star_{\lambda_y}$ falls in a sufficiently small  radius, and so  an approximate minimum can be found via local sensitivity sampling.

\subsection{Algorithmic Intuition}
By Theorem \ref{thm:sensitivitySampling} if we subsample the proximal function $F_{\lambda_t,y}$ using the local sensitivity bound of Theorem \ref{thm:local_true_sens} for a sufficiently large radius $r$ (as a function of $\lambda_t$ via Lemma \ref{lem:radius_bound}), optimizing this function will return a value within a $1+\epsilon$ factor of the true minimum $x^\star_{\lambda_t,y}$ with high probability. Abstracting away the sensitivity sampling technique, our goal becomes to analyze the convergence of the approximate proximal point method (APPM) when the optimum is computed up to $1+\epsilon$ error in each iteration. We give pseudocode for this general method in Algorithm \ref{algo:approximate_ppm}.

\begin{algorithm}[H]
\begin{algorithmic}[1] 
  \STATE  \textbf{input} $x_0 \in \mathbb{R}^d$, $\lambda_t > 0 ~\forall{t\in [T]}$.
  \STATE  \textbf{input} Black-box $\epsilon$-oracle $\mathcal{P}_{F_{\lambda_1,x_0}}$
  \STATE \textbf{for}~ {$t=1\dots \text{ T}$} \textbf{do}
  \STATE \quad  $x_t \gets {P}_{F_{\lambda_t,x_{t-1}}}(x)$ \label{alg_lin:exact_g_lin3}
    \STATE \textbf{end for}
  \STATE \textbf{output} $x_T$ 
\end{algorithmic}
 \caption{APPM}
 \label{algo:approximate_ppm}
\end{algorithm}
\begin{definition}
An algorithm $\mathcal{P}_f$ is called \textit{multiplicative }$\epsilon$-oracle for a given function $F$  if $F(x^\star) \leq F\big(\mathcal{P}_F(x)\big) \leq (1+\epsilon) F(x^\star)$ where $x^\star$ if the true minimizer of $F$.
\end{definition}
In Algorithm~\ref{algo:approximate_ppm}, we provide the pseudocode for APPM under the access of a \textit{multiplicative }$\epsilon$-oracle at each iterate. In our setting, $\mathcal{P}_F$ employs local sensitivity sampling.

\section{Convergence Analysis for Smooth Convex Functions}\label{sec:convex}

In this section, we analyze the convergence of Algorithm~\ref{algo:approximate_ppm} with an $\epsilon$ oracle obtained via local sensitivity sampling.  We demonstrate how to set the regularization parameters $\lambda_t$ in each step and then in the end provide a complete algorithm. Let $F^\star$ denote $F(x^\star)$. Throughout we make the following assumption about $F(x)$:
\begin{itemize}
   \item $F$ is $\mu$-strongly convex, \textit{i.e.}, for all $x,y \in \mathbb{R}^d$, $        F( y) \geq F (x) + \langle \nabla F(x), y - x \rangle + \frac{\mu}{2}\| y - x \|_2^2.  $
\end{itemize}

\subsection{Approximate Proximal Point Method with {Multiplicative} Oracle} \label{sub:approximate_ppm}
We first state convergence bounds for Approximate Proximal Point Method  (Algorithm \ref{algo:approximate_ppm}) with a blackbox multiplicative $\epsilon$-oracle. Our first bound assumes strong convexity, our second does not. Proofs are given in Appendix~\ref{ap:app_ppm}.

\begin{theorem} \label{thm:approximate_ppm_coreset}
For $\mu$-strongly convex $F$, consider $\epsilon_{1},\ldots \epsilon_T \in (0,1)$ and $x_{0},\ldots, x_{T}\in \mathbb{R}^d$ such that $x_t = {P}_{F_{\lambda_t,x_{t-1}}}(x_{t-1})$ where ${P}_{F_{\lambda_t,x_{t-1}}}$ is an $\epsilon_t$-oracle (see Algorithm \ref{algo:approximate_ppm}). 
 Then if $\epsilon_t \leq \frac{\mu}{\mu + \lambda_t}  \forall t\in [T]$, we have  $ F(x_t)    - F^\star  \leq {\frac{1}{1 - \epsilon_t}} \frac{\lambda_t}{\mu+\lambda_t} \left( F(x_{t-1}) - F^\star \right)  + {\frac{\epsilon_t}{1 - \epsilon_t}} F^\star ~\forall t \in [T]$    and 
  $$ F(x_T)  - F^\star \leq \rho (F(x_0) - F^\star) + \delta F^\star $$  
where $\rho = \prod_{t=1}^T {\frac{1}{1 - \epsilon_t}} \frac{\lambda_t}{\mu+\lambda_t}$ and $\delta =  {\mathlarger{\sum}}_{t=1}^T \Big({\frac{\epsilon_t}{1 - \epsilon_t}} \prod_{j = t+1}^T {\frac{1}{1 - \epsilon_t}} \frac{\lambda_j}{\mu + \lambda_j}\Big)$.
\end{theorem}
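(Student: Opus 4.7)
The plan is to first derive the single-step inequality, which is the technical heart of the argument, and then unroll it across the $T$ iterations. Throughout, let $\phi_t(x) := F_{\lambda_t, x_{t-1}}(x) = F(x) + \lambda_t \|x - x_{t-1}\|_2^2$ and let $x_t^\star := \arg\min_x \phi_t(x)$ be the exact proximal minimizer at step $t$.

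The first step is to convert the multiplicative $\epsilon_t$-oracle guarantee into a usable bound on $F(x_t)$. Because $P_{F_{\lambda_t, x_{t-1}}}$ is an $\epsilon_t$-oracle, $\phi_t(x_t) \leq (1+\epsilon_t)\phi_t(x_t^\star)$. Dropping the nonnegative penalty term $\lambda_t \|x_t - x_{t-1}\|_2^2$ from the left-hand side gives $F(x_t) \leq (1+\epsilon_t)\phi_t(x_t^\star)$, which (using $1+\epsilon_t \leq 1/(1-\epsilon_t)$) I would rewrite in the convenient form $(1-\epsilon_t) F(x_t) \leq \phi_t(x_t^\star)$.

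The second, and most involved, step is to upper-bound $\phi_t(x_t^\star)$ by $F^\star + \frac{\lambda_t}{\mu+\lambda_t}\bigl(F(x_{t-1}) - F^\star\bigr)$. I would do this by plugging in a convex combination $x_\beta := (1-\beta) x_{t-1} + \beta x^\star$ for a carefully tuned $\beta \in [0,1]$, so that $\phi_t(x_t^\star) \leq \phi_t(x_\beta)$. Using $\mu$-strong convexity of $F$ to expand $F(x_\beta) \leq (1-\beta) F(x_{t-1}) + \beta F^\star - \tfrac{\mu\beta(1-\beta)}{2}\|x_{t-1}-x^\star\|_2^2$, adding the penalty contribution $\lambda_t\beta^2 \|x_{t-1}-x^\star\|_2^2$, and applying the quadratic growth bound $\|x_{t-1}-x^\star\|_2^2 \leq \frac{2}{\mu}(F(x_{t-1}) - F^\star)$, leaves a coefficient of $F(x_{t-1}) - F^\star$ that is a quadratic function of $\beta$. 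Optimizing $\beta$ so that the residual coefficient of $\|x_{t-1}-x^\star\|_2^2$ is balanced against the linear-in-$\beta$ gain yields the desired factor $\frac{\lambda_t}{\mu+\lambda_t}$. Combining with $(1-\epsilon_t) F(x_t) \leq \phi_t(x_t^\star)$ and rearranging delivers the stated single-step inequality
$$F(x_t) - F^\star \leq \frac{1}{1-\epsilon_t}\frac{\lambda_t}{\mu+\lambda_t}\bigl(F(x_{t-1}) - F^\star\bigr) + \frac{\epsilon_t}{1-\epsilon_t} F^\star.$$

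Finally, I would telescope this recursion by induction on $t$, tracking how the per-step contraction factors $\frac{1}{1-\epsilon_t}\frac{\lambda_t}{\mu+\lambda_t}$ multiply into $\rho$ and how each additive noise term $\frac{\epsilon_t}{1-\epsilon_t} F^\star$ is propagated forward with subsequent contractions to form $\delta$. The assumption $\epsilon_t \leq \mu/(\mu+\lambda_t)$ keeps every per-step multiplier at most $1$, so the product $\rho$ is a genuine contraction. The main obstacle is the convex-combination optimization used to bound $\phi_t(x_t^\star)$: extracting precisely the coefficient $\lambda_t/(\mu+\lambda_t)$ (rather than a looser value like $2\lambda_t/\mu$) requires carefully pairing the strong-convexity term against the proximal penalty; the telescoping step is then largely mechanical bookkeeping.
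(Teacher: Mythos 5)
Your proposal is correct and follows essentially the same decomposition as the paper: bound the exact proximal minimum $\phi_t(x_t^\star)$ by $F^\star + \kappa_t\,(F(x_{t-1})-F^\star)$ for a contraction factor $\kappa_t$, fold in the multiplicative oracle error via $(1-\epsilon_t)F(x_t)\le \phi_t(x_t^\star)$ (using $F(x_t)\le \phi_t(x_t)$), and unroll. The one substantive difference is in step two: the paper simply invokes Lemma~2.7 of \citet{frostig2015regularizing} for the contraction of the exact proximal step, whereas you re-derive it by evaluating $\phi_t$ at $x_\beta=(1-\beta)x_{t-1}+\beta x^\star$ and choosing $\beta$ so that the strong-convexity term $-\tfrac{\mu\beta(1-\beta)}{2}\|x_{t-1}-x^\star\|^2$ absorbs the penalty term $\lambda_t\beta^2\|x_{t-1}-x^\star\|^2$ (the quadratic-growth bound you mention is not even needed; setting the net coefficient of $\|x_{t-1}-x^\star\|^2$ to zero suffices). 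This makes your argument self-contained, which is a genuine plus. One caveat on constants: with the paper's normalization $F_{\lambda,y}(x)=F(x)+\lambda\|x-y\|_2^2$, the balancing condition is $\lambda_t\beta\le \tfrac{\mu}{2}(1-\beta)$, i.e.\ $\beta=\mu/(\mu+2\lambda_t)$, which yields the factor $\tfrac{2\lambda_t}{\mu+2\lambda_t}$ appearing in the paper's own proof rather than the $\tfrac{\lambda_t}{\mu+\lambda_t}$ in the theorem statement; your claimed factor matches the statement but would require the penalty $\tfrac{\lambda_t}{2}\|x-y\|_2^2$. This is an inconsistency internal to the paper, not a flaw in your argument, but you should make the normalization explicit when carrying out the balancing step.
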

\begin{theorem}\label{thm:approximate_ppm_coreset_smooth}
For a  smooth convex function $F$,  let $\epsilon_{1},\ldots,\epsilon_{T} = \epsilon \text{ where } \epsilon \in (0,1/2)$ and $x_0,\ldots,x_T \in \mathbb{R}^d$ be as in Theorem \ref{thm:approximate_ppm_coreset}.
Then, we have  
  $$ F(x_T) - F^\star  \leq \frac{2}{(1-\epsilon)} \frac{\| x^\star - x_{0} \|_2^2}{\sum_{t=1}^T \frac{2}{\lambda_t}} + \frac{3\epsilon}{1-\epsilon} F^\star.$$   
\end{theorem}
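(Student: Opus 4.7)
The plan is to extend the classical sublinear-rate analysis of the exact proximal point method to the multiplicative-oracle setting, translating the multiplicative slack per iteration into an $O(\epsilon F^\star)$ additive error. First I would apply the oracle condition $\phi_t(x_t) \le (1+\epsilon)\phi_t(\tilde x_t)$ (where $\phi_t := F_{\lambda_t,x_{t-1}}$ and $\tilde x_t := \argmin \phi_t$) together with the trivial optimality $\phi_t(\tilde x_t) \le \phi_t(x^\star) = F^\star + \lambda_t\|x^\star - x_{t-1}\|_2^2$ to obtain the per-step oracle inequality
\[
F(x_t) + \lambda_t\|x_t - x_{t-1}\|_2^2 \le (1+\epsilon)\bigl[F^\star + \lambda_t\|x^\star - x_{t-1}\|_2^2\bigr].
\]

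Next, I would convert this multiplicative inequality into a telescoping per-step bound. Expanding $\|x_t - x_{t-1}\|_2^2 = \|x_t - x^\star\|_2^2 + \|x_{t-1} - x^\star\|_2^2 - 2\langle x_t - x^\star, x_{t-1} - x^\star\rangle$, bounding the cross inner product via a Young-type inequality, and using the $2\lambda_t$-strong convexity of $\phi_t$ (which yields $\|x_t - \tilde x_t\|_2^2 \le \epsilon\phi_t(\tilde x_t)/\lambda_t$) to replace the approximate iterate $x_t$ by the exact minimizer $\tilde x_t$ wherever needed, the target is a per-step bound
\[
(1-\epsilon)\frac{F(x_t) - F^\star}{\lambda_t} + \|x_t - x^\star\|_2^2 \le \|x_{t-1} - x^\star\|_2^2 + \frac{c\,\epsilon\,F^\star}{\lambda_t}
\]
for an absolute constant $c$. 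Summing from $t=1$ to $T$ telescopes the distance terms to $\|x_0 - x^\star\|_2^2 - \|x_T - x^\star\|_2^2 \ge 0$, giving $(1-\epsilon)\sum_{t=1}^T (F(x_t) - F^\star)/\lambda_t \le \|x_0 - x^\star\|_2^2 + c\,\epsilon\,F^\star\sum_{t=1}^T 1/\lambda_t$. Applying Jensen's inequality to the convex function $F$ with weights $2/\lambda_t$ (or invoking an approximate monotonicity of $F(x_t)$ along the APPM trajectory) then extracts a bound on $F(x_T) - F^\star$; matching $\sum_{t=1}^T 2/\lambda_t$ in the denominator produces the claimed $\frac{2}{1-\epsilon}$ prefactor and $\frac{3\epsilon}{1-\epsilon}F^\star$ additive error after simplification.

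The hard part will be producing the clean per-step telescoping display above. The multiplicative oracle contributes a term proportional to $\epsilon\lambda_t\|x_{t-1} - x^\star\|_2^2$ that, if left unhandled, amplifies the distance potential by a factor $(1+\epsilon)$ per step and breaks telescoping; the key technical step will be to balance the Young-inequality constants and exploit the strong-convexity control on $\|x_t - \tilde x_t\|_2^2$ so that this stray $\epsilon$-error is absorbed into the telescoping differential rather than compounding across $T$ iterations. Determining the absolute constant $c$ (ultimately the ``3'' in $\frac{3\epsilon}{1-\epsilon}F^\star$) will require careful bookkeeping through these manipulations.
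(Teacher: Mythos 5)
Your overall strategy---telescope a squared-distance potential $\|x_t-x^\star\|_2^2$ across iterations, converting the multiplicative oracle slack into additive $O(\epsilon F^\star/\lambda_t)$ terms, then extract $F(x_T)-F^\star$ via monotonicity of $F(x_t)$---is exactly the paper's route (its Lemmas on the exact and approximate proximal steps, summed in a final lemma). But your first display contains a genuine gap that the rest of the plan cannot repair. You derive it from the \emph{trivial} optimality $\phi_t(\tilde x_t)\le \phi_t(x^\star)$, which discards the strong-convexity surplus: since $\phi_t$ is $2\lambda_t$-strongly convex and $\tilde x_t$ is its minimizer, the correct inequality is
\[
\phi_t(x^\star)\;\ge\;\phi_t(\tilde x_t)+\lambda_t\|x^\star-\tilde x_t\|_2^2,
\]
equivalently the three-point bound $\tfrac{1}{\lambda_t}\bigl(F^\star-F(\tilde x_t)\bigr)\ge \|\tilde x_t-x_{t-1}\|_2^2+\|x^\star-\tilde x_t\|_2^2-\|x^\star-x_{t-1}\|_2^2$, which is the paper's Lemma on the exact proximal point method. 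The dropped term $\lambda_t\|x^\star-\tilde x_t\|_2^2$ is precisely what supplies the $\|x_t-x^\star\|_2^2$ on the left of your target per-step bound. From your display alone, no expansion of $\|x_t-x_{t-1}\|_2^2$ plus Young's inequality can produce it: the inequality $F(x_t)+\lambda_t\|x_t-x_{t-1}\|_2^2\le(1+\epsilon)[F^\star+\lambda_t\|x^\star-x_{t-1}\|_2^2]$ is consistent with $x_t$ lying on the far side of $x_{t-1}$ from $x^\star$ at distance $\approx\|x^\star-x_{t-1}\|_2$, so $\|x_t-x^\star\|_2$ may nearly double at every step and the potential cannot telescope. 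Your invocation of $2\lambda_t$-strong convexity only to bound $\|x_t-\tilde x_t\|_2^2\le\epsilon\,\phi_t(\tilde x_t)/\lambda_t$ is a different (and also necessary) use of strong convexity; it controls the approximate-versus-exact iterate mismatch but does not restore the missing $\|x^\star-\tilde x_t\|_2^2$ term.

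Once the three-point inequality is put back in, your plan does coincide with the paper's: anchor the telescoping at the exact minimizers $\tilde x_t=x^\star_{\lambda_t,x_{t-1}}$, use the oracle condition together with $\|x_t-\tilde x_t\|_2^2\le\epsilon\,\phi_t(\tilde x_t)/\lambda_t$ to transfer both the function values and the distance terms from $\tilde x_t$ to $x_t$ at a cost of $O(\epsilon/\lambda_t)\,F^\star$ per step, sum over $t$, and finish with the (asserted) monotonicity $F(x_t)\le F(x_{t-1})$ rather than Jensen. Your closing paragraph correctly identifies where the bookkeeping is delicate, but the fix is the stronger per-step lemma, not better constants in Young's inequality.
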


\subsection{Local Sensitivity Sampling} \label{subsec:local_sens_analysis} 
We now discuss how to choose the parameters for Algorithm~\ref{algo:approximate_ppm} when using local sensitivity sampling to implement the $\epsilon$-oracle in each step. From Lemmas~\ref{lem:Equivalence} and \ref{lem:radius_bound} it is clear that if $\lambda_t$ goes down, the corresponding radius $r_t$ goes up.  However, in Theorem~\ref{thm:local_true_sens}, we bound the true local sensitivity at iteration $t$ by a quantity depending on $\frac{r_t}{\lambda_t}$, which comes from the error in the quadratic approximation. Thus, if we choose $\lambda_t$ very small, the term $\frac{r_t}{\lambda_t}$ will dominate in the local sensitivity approximation, and we won't see any advantage from local sensitivity sampling over, e.g., uniform sampling. Making $\lambda_t$ large will improve the local sensitivity  approximation but slow down convergence. 

To balance these factors, we will  choose $\lambda_t$ of the order of $r_t$. In particular, considering Lemma \ref{lem:radius_bound}, we choose $\lambda_t = \sqrt{\|\nabla F(x_{t-1})\|}_2$. The lemma then gives that $r_t \leq \frac{\|\nabla F(x_{t-1}) \|}{\sqrt{\|\nabla F(x_{t-1})\|} + \mu} \leq \sqrt{\|\nabla F(x_{t-1})\|}_2$. We here now provide an end to end algorithm which utilizes local sensitivity sampling in the approximate proximal point method framework presented in Algorithm~\ref{algo:approximate_ppm}. The pseudo-code and details of the algorithm are given in Algorithm~\ref{algo:end_to_end_appm} where we denote ${F}_{\lambda_t,x_{t-1}}^s(x)$ as the importance sampled subset of ${F}_{\lambda_t,x_{t-1}}(x)$ which has been obtained via local sensitivity sampling. Line 9 of Algorithm ~\ref{algo:end_to_end_appm}  can be considered as a black-optimization problem which is apparently a strongly-convex optimization problem and can be optimized exponentially fast.

\paragraph{On Convergence:} With this choice of $\lambda_t$, the convergence rate of APPM under our strong convexity assumption will be $ \mathcal{O}\left( \frac{\| \sqrt{ \tilde \nabla F(x)}\|_2}{\mu} \log (1/\varepsilon) \right) $ where $\sqrt{\| \tilde \nabla F(x)\|}_2$ represents  $\frac{1}{T} \sum_{i =0}^{T-1} \sqrt{ \| \nabla F(x_i)  \|}_2$. If $F$ is smooth with smoothness parameter $L$, we have: $\|\nabla F(x) \|_2\leq L \|x - x^\star \|_2$. For the smooth but non-strongly convex problem, if we assume $\lambda_t \le \epsilon$ for some $\epsilon$ for all $t$ then, $\|\nabla F(x_t) \|_2^2 \in \mathcal{O}(1/{T})$ in the worst case. Hence, the rate of for non-strongly convex smooth function will behave like $\mathcal{O}(1/T^{5/4})$.

\begin{algorithm}
\begin{algorithmic}[1] 
  \STATE  \textbf{input} $x_0 \in \mathbb{R}^d$, $\epsilon_t$, and $\mu$.
  \STATE Compute $\|\nabla F(x_0)\|_2$, $F(x_0)$, and $C_0$
  \STATE \textbf{for}~ {$t=1\dots \text{ T}$} \textbf{do}
  \STATE \quad Compute regularizer $\lambda_t$ $\gets$ $\sqrt{\|\nabla f(x_{t-1})\|}_2$. \label{alg_lin:exact_g_lin1_end}
  \STATE \quad Compute radius $r_t$ $\gets$ $\frac{\|\nabla f(x_{t-1})\|_2}{\sqrt{\|\nabla f(x_{t-1})\|}_2+ \mu}$. \label{alg_lin:exact_g_lin2_end}
  \STATE \quad Get $\tilde F_{\lambda_{t}, x_{t-1}}$ via Taylor Expansion.\label{alg_lin:exact_g_lin3_end}
  \STATE \quad Compute the local sensitivity for $ F_{\lambda_{t}, x_{t-1}}$ using Theorem \ref{thm:local_true_sens}.\label{alg_lin:exact_g_lin4_end}
  \STATE \quad Local sensitivity based sampling of ${F}_{\lambda_t,x_{t-1}}^s(x)$ from ${F}_{\lambda_t,x_{t-1}}(x)$.\label{alg_lin:exact_g_lin4_end}
   \STATE \quad $x_{t} \gets \argmin_{x \in B(r_t,x_{t-1})} F_{\lambda_t,x_{t-1}}^s(x)$.\label{alg_lin:exact_g_lin5_end_opt}
    \STATE \quad  Compute $\|\nabla F(x_t)\|_2$.\label{alg_lin:exact_g_lin8_end}
    \STATE \textbf{end for}
  \STATE \textbf{output} $x_T$ 
\end{algorithmic}
\caption{APPM with Local Sensitivity Sampling}
 \label{algo:end_to_end_appm}
\end{algorithm}

\section{An Adaptive Stochastic Trust Region Method}\label{sec:trust}

Related to the proximal point approach, sensitivity sampling can be used to obtain an adaptive stochastic trust region. In each iteration $t$, we approximately minimize a quadratic approximation to  $F$ over a ball, using local sensitivity sampling and directly applying the sensitivity score bound of Theorem \ref{thm:quad}. At iteration $t$ the center of the ball is at 

 $x_{t-1}$ and the radius is set to $r_t = \frac{\|\nabla F(x_{t-1}) \|_2}{\lambda_{t}+ \mu}$. We provide pseudocode in Algorithm \ref{algo:ada_trust_region} and a proof of a convergence bound in Appendix~\ref{ap:quad_approx}. Here we just state the main result. 
\begin{theorem} \label{thm:main}
For a given set of constants $C_k $, $\delta_k \in (0,1)$,  and $\tilde \epsilon_k  = \delta_k \frac{\mu}{\lambda_k + \mu}$ which is an error tolerance for the quadratic approximation of the function $F_{\lambda_{k},x_{k-1}}(x)$  for all $k$, if $\lambda_{k+1}$ is chosen of $\mathcal{O}(\sqrt{\|\nabla F(x_k) \|_2})$ then at iteration $k+1$ Algorithm \ref{algo:ada_trust_region} satisfies:
\begin{align}
F(x_{k+1})  - F^\star &\leq (1 + 2 \epsilon_{k+1}) \frac{2\lambda_{k+1}}{2\lambda_{k+1}+\mu} \left(F(x_k) - F^\star \right)  + 2 \epsilon_{k+1} F^\star, \label{eq:main_thm_eq1_paper}
\end{align}
where $\epsilon_{k+1} = 2\tilde{\epsilon}_{k+1}\left( 1  + \frac{1}{m} \right)$, $m$ and $c$ are positive constants. 
\end{theorem}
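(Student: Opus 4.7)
My plan is to reduce the theorem to a single-step proximal-point-style contraction in which three distinct error sources are bundled into the constant $\epsilon_{k+1}$. The three sources are (i) replacing $F_{\lambda_{k+1},x_k}$ by the quadratic approximation $\tilde F_{\lambda_{k+1},x_k}$, (ii) replacing $\tilde F_{\lambda_{k+1},x_k}$ by the leverage-score-subsampled surrogate, and (iii) minimizing over the ball $B(r_{k+1},x_k)$ rather than all of $\mathbb{R}^d$. First I would dispose of (iii) via \lemref{lem:radius_bound}: the exact proximal minimizer $x^\star_{\lambda_{k+1},x_k}$ lies in $B(r_{k+1},x_k)$ with $r_{k+1}=\|\nabla F(x_k)\|_2/(\lambda_{k+1}+\mu)$, so the constrained and unconstrained proximal minima agree. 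For (i) I would use the hypothesis that the quadratic agrees with $F_{\lambda_{k+1},x_k}$ on the ball up to multiplicative tolerance $\tilde\epsilon_{k+1}=\delta_{k+1}\tfrac{\mu}{\lambda_{k+1}+\mu}$; this is precisely the regime in which \thref{thm:local_true_sens} makes the closed-form leverage-score bounds of \thref{thm:quad} valid upper bounds on the true local sensitivities of $F_{\lambda_{k+1},x_k}$. For (ii) I would invoke \thref{thm:sensitivitySampling} with these sensitivity bounds and $m$ samples to obtain a uniform $(1\pm c/m)$ multiplicative approximation of $\tilde F_{\lambda_{k+1},x_k}$ on $B(r_{k+1},x_k)$ with high probability.

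Composing the multiplicative factors produced by (i) and (ii) and simplifying, the point $x_{k+1}$ returned by the algorithm would satisfy
\[
F_{\lambda_{k+1},x_k}(x_{k+1}) \;\le\; (1+\epsilon_{k+1})\,F_{\lambda_{k+1},x_k}(x^\star_{\lambda_{k+1},x_k}),
\]
with $\epsilon_{k+1} = 2\tilde\epsilon_{k+1}(1+1/m)$ after a short arithmetic rearrangement. From this multiplicative-oracle statement I would mimic the argument of \thref{thm:approximate_ppm_coreset}: substitute the global minimizer $x^\star$ as a comparator (using optimality of $x^\star_{\lambda_{k+1},x_k}$ in the unconstrained proximal problem), expand $F_{\lambda_{k+1},x_k}(x)=F(x)+\lambda_{k+1}\|x-x_k\|_2^2$, drop the nonnegative term $\lambda_{k+1}\|x_{k+1}-x_k\|_2^2$ on the left, and bound $\lambda_{k+1}\|x^\star-x_k\|_2^2$ using a combination of the proximal inequality and $\mu$-strong convexity of $F$. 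The characteristic factor $\tfrac{2\lambda_{k+1}}{2\lambda_{k+1}+\mu}$ then falls out of the balance between the proximal regularizer (whose Hessian is $2\lambda_{k+1} I$ in this convention) and the strong-convexity constant $\mu$.

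The main obstacle I expect is coupling the choice of $\lambda_{k+1}$ to the quality of the local sensitivity bound in (i)--(ii). The sensitivity estimate from \thref{thm:local_true_sens} carries an additive term of order $C_i r_{k+1}/\lambda_{k+1}$ coming from the cubic Taylor remainder; if $\lambda_{k+1}$ is too small this term dominates and the sample count needed to reach tolerance $\tilde\epsilon_{k+1}$ blows up, while if $\lambda_{k+1}$ is too large the contraction factor $\tfrac{2\lambda_{k+1}}{2\lambda_{k+1}+\mu}$ tends to one and the iteration stalls. The prescribed rule $\lambda_{k+1}=\Theta(\sqrt{\|\nabla F(x_k)\|_2})$ is exactly the balance point at which $r_{k+1}/\lambda_{k+1}$ stays bounded while per-iteration progress remains nontrivial, and the delicate bookkeeping is to verify that under this rule both $\tilde\epsilon_{k+1}$ and a moderate sample size $m$ are simultaneously achievable so that the final $\epsilon_{k+1}=2\tilde\epsilon_{k+1}(1+1/m)$ is meaningful.
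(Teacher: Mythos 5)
There is a genuine gap. You treat the quadratic-approximation error as a \emph{multiplicative} tolerance $\tilde\epsilon_{k+1}$ that can simply be composed with the subsampling error to give a clean oracle statement $F_{\lambda_{k+1},x_k}(x_{k+1})\le(1+\epsilon_{k+1})F^\star_{\lambda_{k+1},x_k}$, after which you invoke the machinery of Theorem~\ref{thm:approximate_ppm_coreset}. But the Taylor remainder is \emph{additive}: it contributes a term of order $C_k r_{k+1}^3$ that is not proportional to the function value, so it cannot be folded into a relative error by ``a short arithmetic rearrangement.'' The paper's proof keeps this term explicit, arriving at $F_{\lambda_{k+1},x_k}(x_k^s)\le(1+4\tilde\epsilon_{k+1})F^\star_{\lambda_{k+1},x_k}+4C_kr_{k+1}^3$, and then cancels the additive excess using two ingredients entirely absent from your outline: (a) a \emph{lower} bound $\|x_{k+1}-x_k\|\ge r_{k+1}/c$ on the step length, which lets the regularizer contribute $-\frac{\lambda_{k+1}}{c^2}r_{k+1}^2$ when passing from $F_{\lambda_{k+1},x_k}(x_{k+1})$ to $F(x_{k+1})$, and (b) the condition
\begin{align*}
4C_k r_{k+1}^3 - \tfrac{\lambda_{k+1}}{c^2} r_{k+1}^2 \;\leq\; \tfrac{2\lambda_{k+1}}{2\lambda_{k+1}+\mu}\,\tfrac{4\tilde\epsilon_{k+1}}{m}\bigl(F(x_k)-F^\star\bigr) + 4\tilde\epsilon_{k+1}\bigl(1+\tfrac{1}{m}\bigr)F^\star,
\end{align*}
verified by a case split on whether $F(x_k)\gtrless\theta_{k+1}F^\star$. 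Solving this inequality for $\lambda_{k+1}$ is precisely where the rule $\lambda_{k+1}=\mathcal{O}(\sqrt{\|\nabla F(x_k)\|_2})$ comes from, and where the constants $m$ and $c$ enter the final $\epsilon_{k+1}=2\tilde\epsilon_{k+1}(1+1/m)$. In your sketch the choice of $\lambda_{k+1}$ appears only as a sample-complexity consideration (keeping $r_{k+1}/\lambda_{k+1}$ bounded in Theorem~\ref{thm:local_true_sens}); in the actual proof it is a load-bearing part of the convergence recursion itself.

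A secondary misreading: $m$ in the theorem is not the number of samples drawn, and $1/m$ is not a sampling error rate of the form $c/m$ from Theorem~\ref{thm:sensitivitySampling}. It is a free budget-splitting parameter in the condition above (via $\theta_{k+1}=1+(m+1)\tfrac{2\lambda_{k+1}+\mu}{2\lambda_{k+1}}$), and $c$ is the step-length constant in $\|x_{k+1}-x_k\|\ge r_{k+1}/c$, not a universal constant from a concentration bound. Your plan would need to be restructured around the additive remainder before it could yield the stated recursion.
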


Comparing equation~\eqref{eq:main_thm_eq1_paper} in Theorem~\ref{thm:main} with the bound in Theorem~\ref{thm:approximate_ppm_coreset}, we can see that we have obtained a similar recursive relation in both equations, and hence the trust region method will have a similar convergence rate to APPM in the presence of an $\epsilon$-\textit{multiplicative} oracle.

\begin{figure*}
	\begin{subfigure}{.325\textwidth}
    	\centering
    	\includegraphics[width=.99\linewidth]{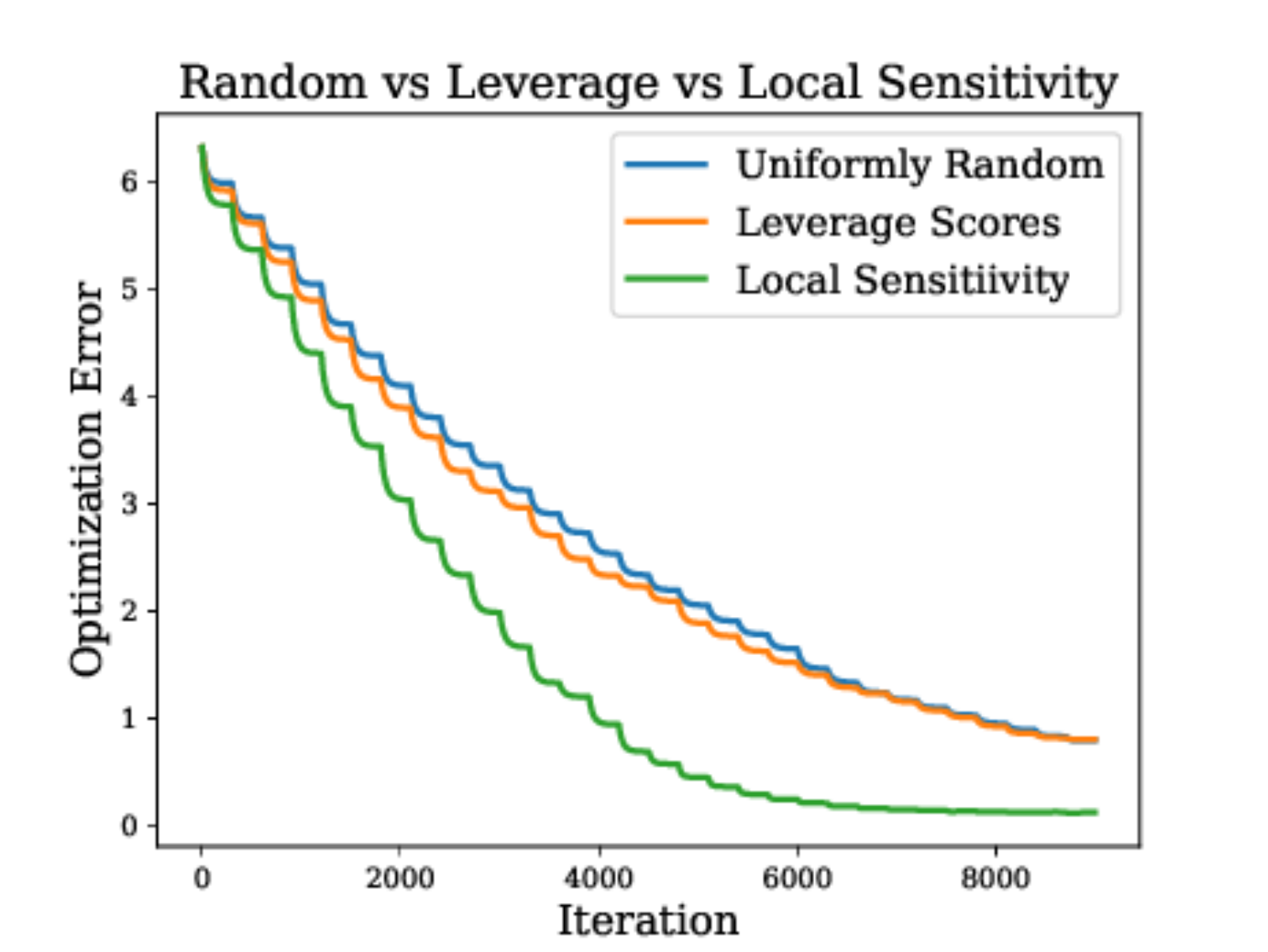}
    	\caption{\textit{Synthetic Data}}
    	\label{fig:select_fig4}
  \end{subfigure}	
  \begin{subfigure}{.325\textwidth}
    \centering
    \includegraphics[width=.99\linewidth]{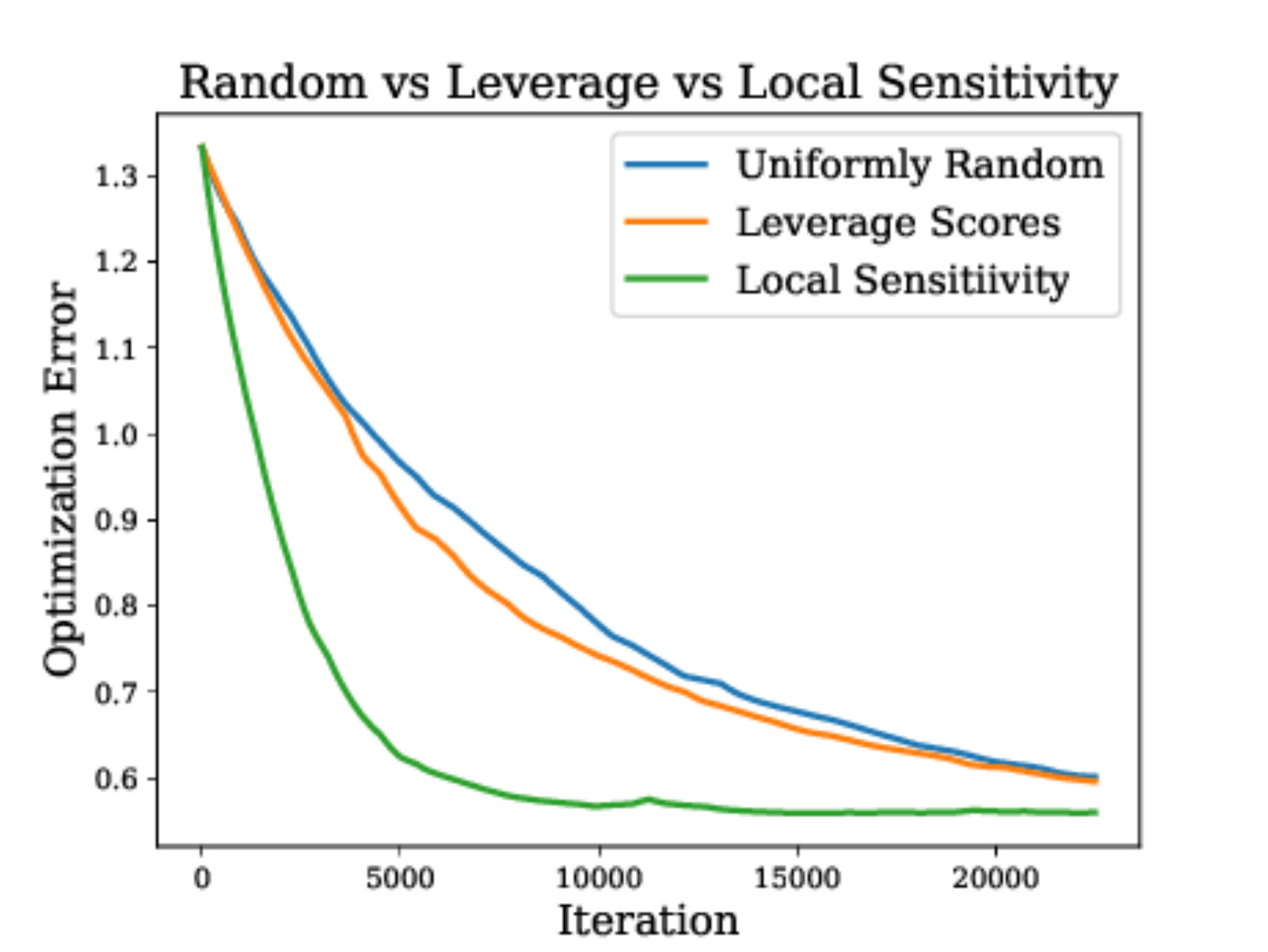}
    \caption{\textit{Letter-Binary Train} }
    \label{fig:select_fig1}
  \end{subfigure}%
  \begin{subfigure}{.325\textwidth}
    \centering
    \includegraphics[width=.99\linewidth]{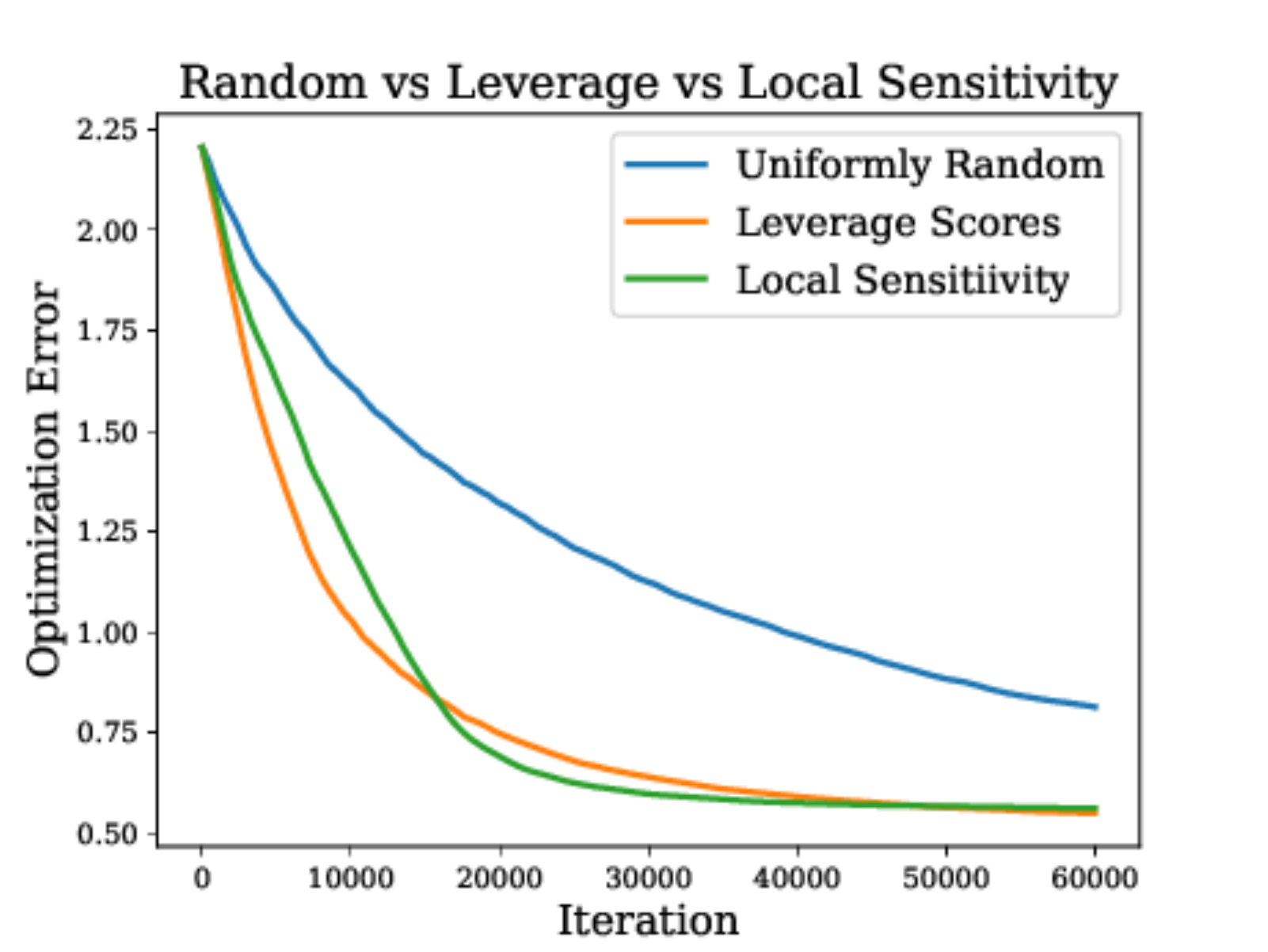}
    \caption{\textit{Magic04 Test}}
    \label{fig:select_fig2}
  \end{subfigure}
  \begin{subfigure}{.325\textwidth}
    \centering
    \includegraphics[width=.99\linewidth]{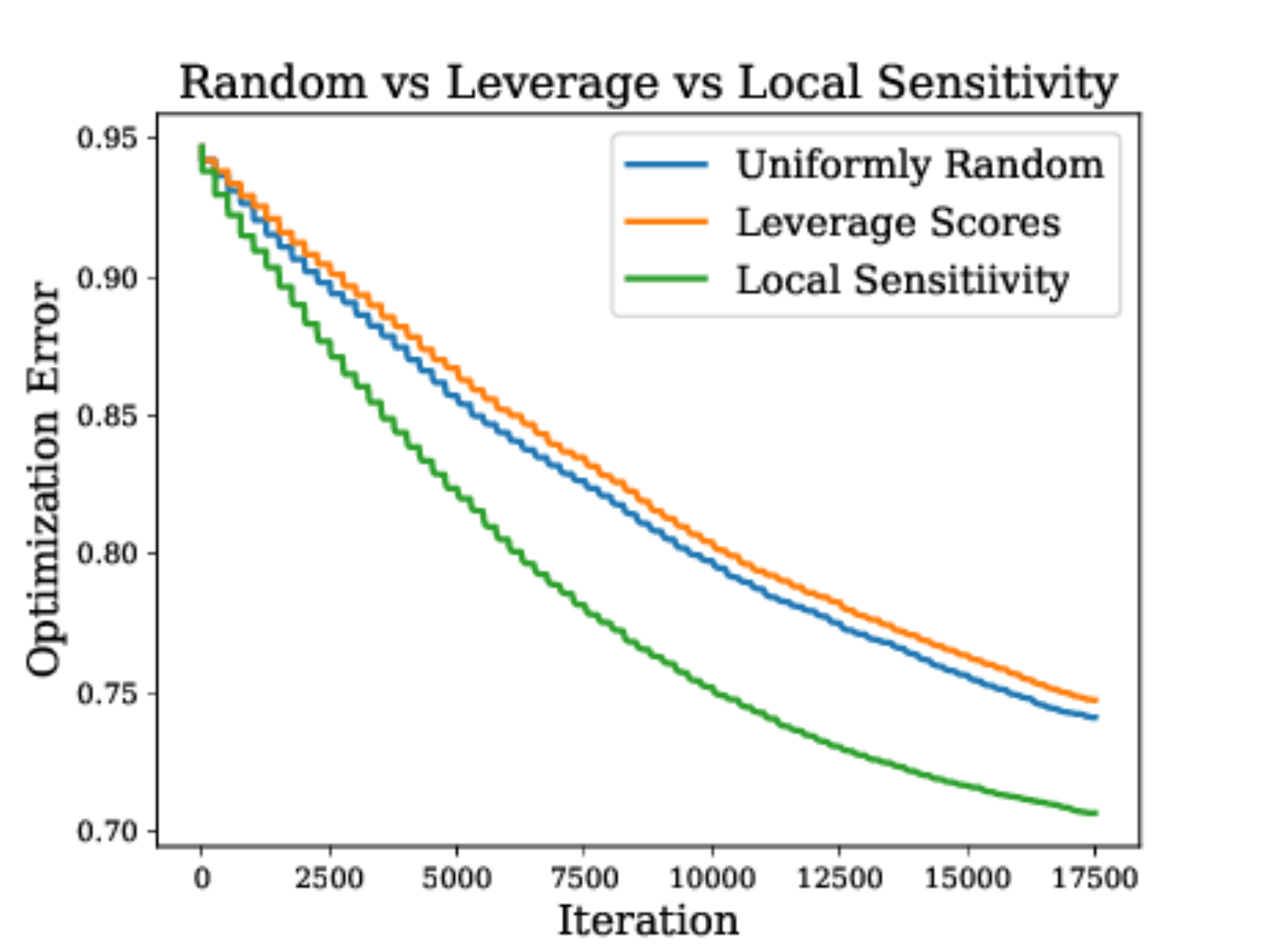}
    \caption{\textit{MNIST Test}}
    \label{fig:select_fig3}
  \end{subfigure}%
\begin{subfigure}{.325\textwidth}
    	\centering
    	\includegraphics[width=.99\linewidth]{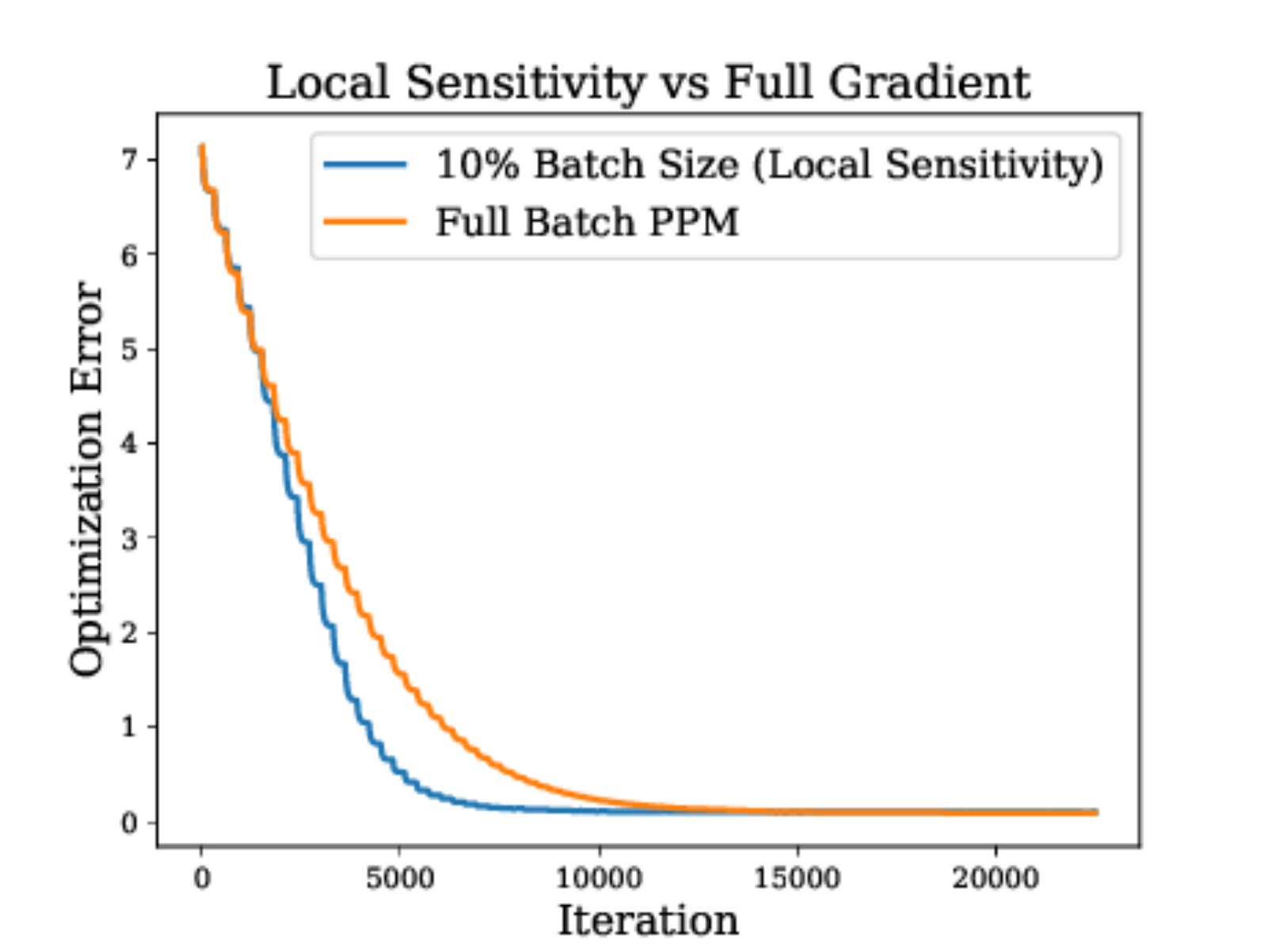}
    	\caption{\textit{Synthetic Data}}
    	\label{fig:select_m_fig4}
  \end{subfigure}	
  \begin{subfigure}{.325\textwidth}
    \centering
    \includegraphics[width=.99\linewidth]{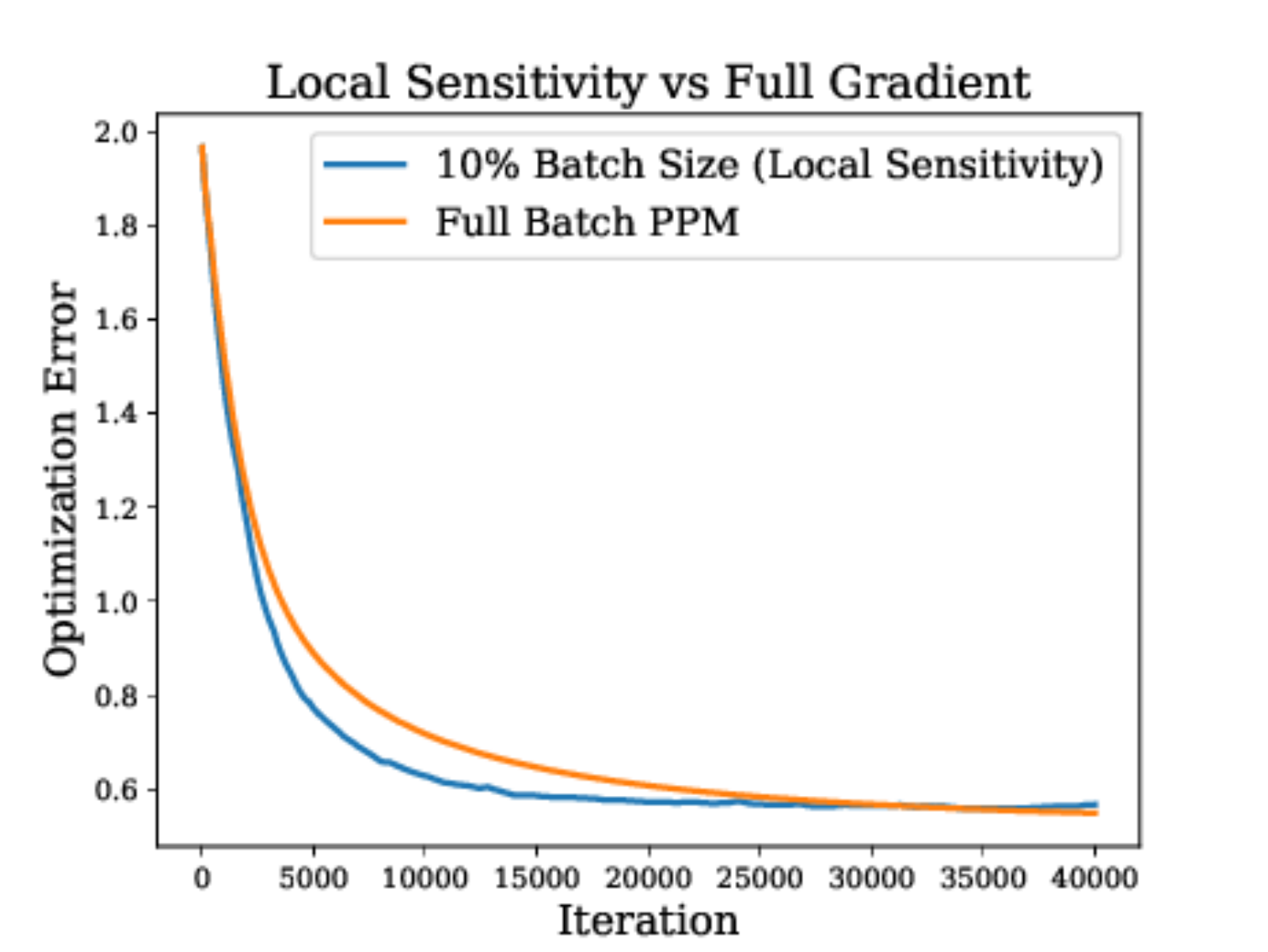}
    \caption{\textit{Letter-Binary Test} }
    \label{fig:select_m_fig1}
  \end{subfigure}%
  \caption{ (a-d) Local sensitivity sampling vs. uniform random sampling and leverage score sampling on four datasets: (a) \textit{Synthetic Data} (3000 points), (b) \textit{Letter Binary Train} (12000 points), (c) \textit{Magic04 Test} (4795 points), and (d) \textit{MNIST Test} (10000 points). (e-f) Local Sampling Method is compared  with Full Batch Gradient for (e) \textit{Synthetic } and (f) \textit{Letter Binary Test}.}
  \label{fig:selectfig}
\end{figure*}
\section{Experiments} \label{sec:expts}
We conclude by giving some initial experimental evidence to justify the performance of our proposed algorithm in practice. We provide the experiments for \textit{Approximate Proximal Point Method with Local Sensitivity Sampling} (Algorithm \ref{algo:end_to_end_appm}). We run our algorithm on the following four datatsets\footnote{Datasets can be downloaded from: \href{http://manikvarma.org/code/LDKL/download.html}{manikvarma.org/code/LDKL/download.html}.}
: (a) \textit{Synthetic Data} (b) \textit{Letter Binary} \citep{frey1991letter} (c) \textit{Magic04} \citep{bock2004methods} and (d) \textit{MNIST Binary} \citep{lecun1998gradient}. Prefix `Train' or `Test' denotes if the train or test split was used for the experiment. The \textit{Synthetic Data} was generated by first generating a matrix $A$ of size $3000 \times 300$ drawn from a 300 dimensional standard normal random variable. Then another vector $x_0$ of size 300 was fixed which is also drawn from a normal random variable to obtain $\hat{y} = Ax_0 + \eta$ where $\eta\sim 0.1*\mathcal{N}(0,1)$. Finally, the classification  label vector $y$ was chosen as $\text{sign}(\hat{y})$. We perform all our experiments for logistic regression with an $\ell_2^2$ regularization parameter of 0.001. For the experiments plotted in the Figure~\ref{fig:selectfig}, we have considered a fixed sample size of 100 data points for every iteration of the proximal algorithm. 
In the first four subfigures of Figure~\ref{fig:selectfig}, we compare compare local sensitivity sampling with two base lines: uniform random sampling and sampling using the leverage scores of the data matrix $A$. On the horizontal axis, we report the total number of iterations which is the number of times the sampling oracle is called (outer loop in Algorithm~\ref{algo:end_to_end_appm}) multiplied by number of times the gradient call to solve the optimization problem given in Line~9 in Algorithm~\ref{algo:end_to_end_appm}. We report the optimization error on vertical axis. 

From the plots in Figures~\ref{fig:select_fig4}, \ref{fig:select_fig1}, \ref{fig:select_fig2} and \ref{fig:select_fig3}, it is evident  that our method outperforms uniform random sampling with a large margin on the synthetic and real datasets. It also often performs much better than leverage score sampling. Since the local sensitively approximations of Theorems \ref{thm:quad} and \ref{thm:local_true_sens} are the leverage scores of a matrix with essentially  the same dimensions as $A$, these methods have the same order of computational cost.

We perform a second set of experiments to compare our sampling technique with  full batch gradient iteration for each proximal point iteration on \textit{Synthetic} and \textit{Letter Binary Test} which we plot in Figures~\ref{fig:select_m_fig4} and \ref{fig:select_m_fig1}. We can see in Figures~\ref{fig:select_m_fig4} and \ref{fig:select_m_fig1} that our sampling method outperforms the full gradient just with $10\%$ of total points. In both plots, the sampling method needs just half of the number of iterations taken by full gradient to saturate to similar value. 

In both of the experiments, we set the number of inner loop iteration (number of calls to the gradient oracle for solving Line~9 in Algorithm~\ref{algo:end_to_end_appm}) in advance to let the optimization error saturate for that particular outer loop; however the plots demonstrate that it can be set to a much smaller number or can be set adaptively to achieve gains of multiple folds. 
\section{Conclusion}
In this work, we study how the elegant approach of function approximation via sensitivity sampling can be made practical. We overcome two barriers: (1) the difficulty of approximating the sensitivity scores and (2) the high sample complexities required by  theoretical bounds. We handle both by considering a \emph{local} notion of sensitivity, which we can efficiently approximate and bound. We demonstrate that this notion can be combined with methods that globally optimize a function via iterative local optimizations, including proximal point and trust region methods.

Our work leaves open a number of questions. Most importantly, since local sensitivity approximation incurs some computational overhead (a leverage score computation along with some derivative computations), we believe it will be especially useful for functions that are difficult to optimize, e.g., non-strongly-convex functions. Understanding how our theory extends and how our method performs in practice on such functions would be very interesting. It would be especially interesting to compare performance to related approaches, such as quasi-Newton and other trust region approaches.

\clearpage

\newpage

\bibliography{opt-ml}
\bibliographystyle{abbrvnat}

\newpage
\onecolumn
\appendix
\begin{center}
{\centering \LARGE Appendix }
\vspace{1cm}
\sloppy

\end{center}

\section{Leverage Scores as Sensitivities of Quadratic Functions} \label{ap:sec2}

We here start by stating Lemma~\ref{lem:leverage} and giving its proof. This lemma is helpful in proving Theorem~\ref{thm:quad}. Lemma~\ref{lem:leverage} is a relatively well known characterization of the leverage scores of a matrix, see e.g, \cite{avron2017random}; however  for completeness we give a proof here.
\begin{lemma}[Leverage Scores as Sensitivities]\label{lem:leverage}
For any $C \in \R^{n \times p}$ with $i^{th}$ row $c_i$,
\begin{align*}
\ell_i^\lambda(C) = \max_{\{z \in \mathbb{R}^p : \norm{z} > 0\}} \frac{[Cz]_i^2}{\norm{Cz}_2^2 +\lambda \norm{z}_2^2} =  c_i^T (C^T C+\lambda I)^{-1}c_i.
\end{align*}
\end{lemma}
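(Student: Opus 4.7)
The plan is to reduce the maximization to a standard Rayleigh-quotient / Cauchy--Schwarz calculation by a change of variables that diagonalizes the quadratic form in the denominator. First I would rewrite the numerator as $[Cz]_i^2 = (c_i^T z)^2$ and the denominator as $\|Cz\|_2^2 + \lambda \|z\|_2^2 = z^T(C^T C + \lambda I)z$. Note that since $\lambda > 0$ (or even $\lambda \ge 0$, as long as we interpret things suitably; the proof below implicitly uses that $C^T C + \lambda I$ is positive definite, which holds whenever $\lambda>0$ or $C$ has full column rank), the matrix $M := C^T C + \lambda I$ is symmetric positive definite and thus has a symmetric positive-definite square root $M^{1/2}$ with inverse $M^{-1/2}$.

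Next I would perform the substitution $w = M^{1/2} z$, equivalently $z = M^{-1/2} w$. The map $z \mapsto w$ is a bijection on $\R^p \setminus \{0\}$, so maximizing over $\{z : \|z\|_2 > 0\}$ is the same as maximizing over $\{w : \|w\|_2 > 0\}$. Under this substitution the denominator simplifies to $z^T M z = w^T w = \|w\|_2^2$, and the numerator becomes $(c_i^T M^{-1/2} w)^2 = (u^T w)^2$ where $u := M^{-1/2} c_i$. Thus
\begin{align*}
\max_{z \neq 0}\frac{(c_i^T z)^2}{z^T M z} \;=\; \max_{w \neq 0}\frac{(u^T w)^2}{\|w\|_2^2}.
\end{align*}

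Finally I would apply Cauchy--Schwarz (or equivalently note this is a Rayleigh quotient of the rank-one matrix $u u^T$): $(u^T w)^2 \le \|u\|_2^2\,\|w\|_2^2$, with equality when $w$ is a nonzero multiple of $u$. Hence the maximum equals $\|u\|_2^2 = c_i^T M^{-1/2} M^{-1/2} c_i = c_i^T (C^T C + \lambda I)^{-1} c_i$, which is the claimed closed form. There is no real obstacle here; the only thing to be careful about is invertibility of $M$, which is why the regularizer $\lambda$ (or a rank assumption on $C$) is needed to make $M^{1/2}$ and its inverse well-defined, and the fact that the change of variables is a bijection so attaining the supremum over $w$ really does correspond to attaining it over $z$. \QEDA
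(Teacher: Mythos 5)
Your proof is correct, and it takes a genuinely different route from the paper's. The paper argues via first-order optimality: it computes the gradient of the ratio $\sigma(z) = (c_i^Tz)^2/\bigl(z^T(C^TC+\lambda I)z\bigr)$, sets it to zero, deduces that any critical point must be a scalar multiple of $(C^TC+\lambda I)^{-1}c_i$, and substitutes back to evaluate the ratio there. Your approach instead whitens the denominator via $w = M^{1/2}z$ with $M = C^TC+\lambda I$ and then invokes Cauchy--Schwarz on the rank-one Rayleigh quotient $(u^Tw)^2/\norm{w}_2^2$ with $u = M^{-1/2}c_i$. Your route buys something real: the paper's calculus argument only identifies critical points and implicitly assumes the supremum is attained at one, silently divides by $(c_i^Tz)^2$ (which requires $c_i^Tz \neq 0$), and does not verify that the critical point is a maximum rather than some other stationary point; your Cauchy--Schwarz inequality gives a one-line global upper bound together with an explicit maximizer $w = u$ (equivalently $z = M^{-1}c_i$), so attainment and globality come for free. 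Both arguments rest on the same underlying requirement that $M$ be positive definite (so that $M^{-1}$, and in your case $M^{\pm 1/2}$, exist), which you correctly flag as needing $\lambda > 0$ or $C$ of full column rank.
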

\begin{proof}
Write $\sigma(z) = \frac{[Cz]_i^2}{\norm{Cz}_2^2 +\lambda \norm{z}_2^2}$, $f(z) = [Cz]_i^2 = (c_i^T z)^2$, $g(z) = \norm{Cz}_2^2 +\lambda \norm{z}_2^2 = z^T(C^TC+\lambda I)z$. We can compute the gradient of $\sigma(z)$ as:
\begin{align*}
    \nabla_j \sigma(z) = \frac{\nabla_j f(z) \cdot g(z) - \nabla_j g(z) \cdot f(z)}{g(z)^2}.
\end{align*}
At the minimium this must equal $0$ and so since $g(z) > 0$ for $z$ with $\norm{z}_2 > 0$, we must have $\nabla f(z) \cdot g(z) - \nabla g(z) \cdot f(z) = 0$.
We have $\nabla f(z) = 2c_i^T z \cdot c_i$ and $\nabla g(z) = 2(C^T C + \lambda I)z$. We thus have at optimum:
\begin{align*}
    c_i \cdot \left (2c_i^T z \cdot z^T (C^TC+\lambda I)z\right ) - 2(C^TC+\lambda I)z \cdot (c_i^T z)^2  = 0.
\end{align*}
Dividing by $2(c_i^T z)^2$ we must have:
\begin{align*}
    - c_i \cdot \frac {z^T (C^TC+\lambda I)z}{c_i^T z} = (C^TC+\lambda I)z.
\end{align*}
For this to hold we must have $(C^T C + \lambda I)z$ equal to a multiple of $c_i$ and so $z = \alpha \cdot (C^T C + \lambda I)^{-1} c_i$ for some $\alpha$. Note that the value of $\alpha$ does not change the value of $\sigma(z)$ since it simply scales the numerator and denominator in the same way. So we have that
$$
z^\star =
\argmax_{\{z \in \mathbb{R}^d: \norm{z}_2> 0\}} \frac{[Cz]_i^2}{\norm{Cz}_2^2 +\lambda \norm{z}_2^2} = (C^T C + \lambda I)^{-1} z_i.$$
Plugging in we have:
  
\begin{align*}
    \max_{\{z \in \mathbb{R}^d: \norm{z}_2> 0\}} \frac{[Cz]_i^2}{\norm{Cz}_2^2 +\lambda \norm{z}_2^2} = \frac{\left (c_i^T (C^T C + \lambda I)^{-1} c_i\right)^2}{c_i^T (C^T C + \lambda I)^{-1} (C^T C + \lambda I) (C^T C + \lambda I)^{-1} c_i} = c_i^T (C^T C+ \lambda I)^{-1} c_i,
\end{align*}
 
which completes the proof.
\end{proof}

\begin{proof}[Proof of Theorem~\ref{thm:quad}]
Letting $z = x-y$ and $\eta  = \min_{x \in B(r,y)} \tilde F_{\lambda,y}(x) $
 we can write:
\begin{align}
     \tilde F_{\lambda,y}(x) 
    & = \underbrace{\left [\frac{1}{2}\norm{H_y^{1/2} Az + H_y^{-1/2}\alpha_y}^2 + \lambda \norm{z}^2+ \gamma(z+y) \right ]}_{: G(z) = \sum_{i=1}^n g_i(z) + \lambda \norm{z}^2 + \gamma(z+y)}  +\underbrace{\left[ F(y)-\frac{1}{4}\norm{H^{-1/2}\alpha_y}^2\right]}_{\Delta = \sum_{i=1}^n {\Delta_i}}.
\end{align}
where $g_i(z) = \frac{1}{2} (H_y^{1/2}Az+H_y^{-1/2}\alpha_y)_i^2$ and 
 $\Delta_i = f_i(a_i^T y) - \frac{1}{4} \left (H_y^{-1/2}\alpha_y \right)_i^2$.
Noting that $G(z)$ is nonnegative, we can write the sensitivity as:
\begin{align}\label{eq:firstSplit}
 \sigma_{\tilde F_{\lambda,y},B(r,y)}(a_i) = \max_{\{z: \norm{z} \le r\}} \frac{g_i(z) + \Delta_i}{G(z)+\Delta} &= \max_{\{z: \norm{z} < r\}} \left [\frac{g_i(z)}{G(z)} \cdot \frac{G(z)}{G(z)+\Delta} + \frac{\Delta_i}{G(z) + \Delta}  \right ]\nonumber\\
 & \le \max_{z \in \R^d} \left [\frac{g_i(z)}{G(z)}\cdot\frac{G(z)}{G(z)+\Delta}  \right ] + \frac{f_i(a_i^T y)}{\eta}
\end{align}
since $G(z) + \Delta = \tilde F_{\lambda,y}(y+z) \ge \eta$ for $\eta  = \min_{x \in B(r,y)} \tilde F_{\lambda,y}(x) $ and since $f_i(a_i^T y) \ge \Delta_i$.
%
When $\Delta \ge 0$, $\frac{G(z)}{G(z)+\Delta} \le 1$. When $\Delta < 0$:
\begin{align*}
\frac{G(z)}{G(z)+\Delta}  = 1 - \frac{\Delta}{G(z)+\Delta} = 1 - \frac{\Delta}{\tilde F_{\lambda,y}(x)} \le 1 - \frac{\Delta}{\eta}.
\end{align*}
Overall we have:
\begin{align}\label{eq:beforeLeverageIntro}
\sigma_{\tilde F_{\lambda,y},\mathcal{W}_\eta}(a_i) &\le \max \left (1, 1 - \frac{\Delta}{\eta} \right ) \cdot \max_{\{z: z+y \in \mathcal{W}_\eta\}} \left [\frac{g_i(z)}{G(z)} \right ]  + \frac{f_i(a_i^T y)}{\eta}.
\end{align}
Letting $\displaystyle \delta = \min_{x \in B(r,y)} \gamma(x) = \min_{z:\norm{z} \le r} \gamma(z+y)$,
$C \in \R^{n \times d+1}$ be the matrix $[H_y^{1/2} A, \frac{1}{\delta}H_y^{-1/2} \alpha_y]$ and $\bar z = [z,-\delta]$ we have: 
\begin{align}\label{eq:zbar}
\frac{g_i(z)}{G(z)} = \frac{(C\bar z)_i^2}{\norm{C\bar z}^2 + \lambda \norm{z}^2 + \gamma(z+y)} = \frac{(C\bar z)_i^2}{\norm{C\bar z}^2 + \lambda \norm{\bar z}^2 - \delta + \gamma(z+y)}
\end{align}
 We can bound this ratio using Lemma \ref{lem:leverage}. Specifically, since $\gamma(z+y) - \delta \ge 0$ the ratio by $\ell_i^\lambda(C)$. Plugging back into \eqref{eq:beforeLeverageIntro} we have:

\begin{align*}
\sigma_{\tilde F_{\lambda,y},\mathcal{W}_\eta}(a_i) &\le \max \left (1, 1 - \frac{\Delta}{\eta} \right ) \cdot \ell_i^\lambda(C) + \frac{f_i(a_i^T y)}{\eta},
\end{align*}
which completes the proof.
\end{proof}

\section{Local Sensitivity Bound via Quadratic Approximation} \label{ap:true_local}
We next prove Theorem \ref{thm:local_true_sens}, which bounds the local sensitivities of a function in terms of the sensitivities of a quadratic approximation to that function, which can in term be bounded using the leverage scores of an appropriate matrix (Theorem \ref{thm:quad}).
\begin{reptheorem}{thm:local_true_sens}
Consider $F_{\lambda,y}$ as in Defs. \ref{def:fs} and \ref{def:prox}, $y \in \mathcal{X}$, radius $r$, and $\displaystyle \alpha = \min_{x \in B(r,y)} F_{\lambda,y}(x)$. We have:
  
\begin{align*}
{\sigma}_{ F_{\lambda,y},B(r,y)}(a_i) \le {\sigma}_{\tilde F_{\lambda,y},B(r,y)}(a_i) + \min \left (\frac{C_i r}{6n\lambda}, \frac{C_i r^3}{6n \alpha}  \right), ~\forall ~ i \in [n].
\end{align*}
\end{reptheorem}

\begin{proof}
From the local quadratic approximation, we have :
\begin{align*}
F_{\lambda,y}(x) = \tilde{F}_{\lambda,y}(x) + B_y(x)\| x -y \|^{3}, \text{ where }  \tilde{F}_{\lambda,y}(x) = \frac{1}{n}\sum_{i=1}^n \tilde{f}_i (a_i^\top x) +\gamma(x) +  \lambda \| x - y \|^2.
\end{align*}
From the previous Theorem~\ref{thm:quad}, we have a bound on the sensitivity for quadratic approximation,
\begin{align*}
{\sigma}_{\tilde F_{\lambda,y}, B(r,y)}(a_i)  = \sup_{x \in  {B}(r,y)}  \frac{\frac{1}{n}\tilde{f}_i (a_i^\top x)}{\tilde{F}_{\lambda,y}(x)}
\end{align*}
We can bound the local sensitivity of the true function $F_{\lambda,y}$ by:
\begin{align*}
{\sigma}_{ F_{\lambda,y}, B(r,y)}(a_i)  = \sup_{x \in{B}(r,y)}  \frac{\frac{1}{n} {f}_i (a_i^\top x)}{{F}_{\lambda,y}(x)} = \sup_{x \in{B}(r,y)}   \frac{\frac{1}{n} \left[\tilde{f}_i (a_i^\top x) + B_y^{(i)}(x)\| x -y \|^{3}\right]}{{F}_{\lambda,y}(x)}
\end{align*}
We have assumed that $B_y(x)=\frac{1}{n}\sum_{i=1}^nB_y^{(i)}(x)$ is positive
for $x \in{B}(r,y)$ and that $B_y^{(i)}(x) \leq \frac{1}{6}C_i $ for all $i$. 
This gives:
\begin{align*}
{\sigma}_{ F_{\lambda,y},B(r,y)}(a_i)  &= \sup_{x \in{B}(r,y)}  \frac{\frac{1}{n} \left[\tilde{f}_i (a_i^\top x) + B_y^{(i)}(x)\| x -y \|^{3}\right]}{{F}_{\lambda,y}(x)} \\
& \leq \underbrace{\sup_{x \in{B}(r,y)} \frac{\frac{1}{n} \left[\tilde{f}_i (a_i^\top x) \right]}{{F}_{\lambda,y}(x)}}_{\text{:= term 1}}  +\underbrace{\sup_{x \in{B}(r,y)}  \frac{C_i}{6n} \frac{\| x -y \|^{3}}{F_{\lambda,y}(x)}}_{\text{:=term 2}}.
\end{align*}
For term 1 we have:
\begin{align*}
\sup_{x \in{B}(r,y)} \frac{\frac{1}{n} \left[\tilde{f}_i (a_i^\top x) \right]}{{F}_{\lambda,y}(x)} = \sup_{x \in{B}(r,y)} \frac{\frac{1}{n} \left[\tilde{f}_i (a_i^\top x) \right]}{\tilde{F}_{\lambda,y}(x) + B_y(x) \| x - y\|^3 } \leq  {\sigma}_{\tilde F_{\lambda,y},B(r,y)}(a_i) ,
\end{align*}
where the inequality comes from assumption that $B_y(x) > 0$ for $x \in{B}(r,y)$. For term 2 we simply bound $F_{\lambda,y}(x) \ge \alpha := \min_{x \in B(r,y)} F_{\lambda,y}(x)$ or alternatively, $F_{\lambda,y}(x) \ge \lambda \norm{x-y}^2$ giving:
\begin{align*}
\frac{C_i}{6n} \frac{\| x -y \|^{3}}{F_{\lambda,y}(x)} \le \min \left (\frac{C_i r}{6n \lambda}, \frac{C_i r^3}{6n \alpha}  \right),
\end{align*}
which completes the theorem. 
\end{proof}

\section{Constrained Penalized Connection}\label{app:constPenalized}

\begin{proof}[Proof of Lemma~\ref{lem:Equivalence}]
Given,  $x^\star = \argmin_{x \in \mathbb{R}^d}     F(x) $. We assume that $F$ is a convex function.  From KKT conditions,  if $x^\star$ does not lie inside the ball than the optimal solution will exist on the boundary of the ball. Hence, the inequality in the equation can be replaced with the equality given that $x^\star $ doesn't lie inside the ball represented by the equations $ \| x - y \|^2 =r^2$. The optimization problem then becomes:
\begin{align}
 x_{r,y}^\star = \argmin_{x \in r^d}     F(x)  ~\text{such that } \| x - y \|^2 = r^2 \label{eq:ppm_equation_constrained_equ}
\end{align}
The Lagrangian of  equation~\eqref{eq:ppm_equation_constrained_equ} is: $ L(x, \nu) = F(x) +  \frac{\nu}{2} \left( \| x - y \|^2 - r^2 \right).$
First order optimality condition for the above equation implies $\nabla F(x_{r,y}^\star) +  \nu^\star (x_{r,y}^\star - y) = 0  \Rightarrow ~  x_{r,y}^\star - y = \frac{-1}{\nu^\star}  \nabla F(x_{r,y}^\star)$. 
Now from the constrained we have, $\| \frac{-1}{\nu^\star}  \nabla F(x_{r,y}^\star) \| = r ~\Rightarrow~ \nu^\star = \frac{\|\nabla F(x_{r,y}^\star) \|}{r}$. Hence, it is clear from the above arguement that $x_{r,y}^\star$ also optimize the following optimization problem:
\begin{align}
x_{r,y}^\star = \argmin_{x \in \mathbb{R}^d}  \left[ F(x) +  \frac{\|\nabla F(x_{\hat{ R},y}^\star) \|}{2r}  \| x - y \|^2  \right] \label{eq:penalized_ppm}
\end{align}
\end{proof}

\begin{proof}[Proof of Lemma~\ref{lem:radius_bound}]
As we have:
\begin{align*}
   F_{\lambda,y}(x) =  F(x) + \lambda \| x - y \|^2
\end{align*}
From the property of strongly convex function:
\begin{align}
    \|\nabla F_{\lambda,y}(y) \| = \| \nabla F(y)\| \geq  (\mu + 2\lambda) \| y - x_{\lambda,y}^\star \| \label{eq:strong_convex_ppm}
\end{align}
Now from the first order optimality of $F_{\lambda,y}$, we have:
\begin{align*}
    \nabla F_{\lambda,y} (x_{\lambda,y}^\star) = \nabla F(x_{\lambda,y}^\star) + 2{\lambda}( x_{\lambda,y}^\star - y)  =0
\end{align*}
Hence,
\begin{align}
    \|\nabla F(x_{\lambda,y}^\star) \| = 2\lambda \| x_{\lambda,y}^\star - y \| \label{eq:strong_convex_ppm_opt}
\end{align}
From the equations~\eqref{eq:strong_convex_ppm} and \eqref{eq:strong_convex_ppm_opt}, we have:
\begin{align*}
    \|\nabla F(x_{\lambda,y}^\star) \| \leq \frac{2\lambda}{\mu +2 \lambda} \|\nabla F(y) \|
\end{align*}
From the equation~\eqref{eq:penalized_ppm}, we know that 
\begin{align*}
    R =  \frac{\| \nabla F(x_{\lambda,y}^\star) \|}{2\lambda } \leq \frac{\| \nabla F(y) \|}{2\lambda + \mu }
\end{align*}
If the optimal point $x^\star$ of the function $F$ lie in the ball then the radius will be further less.  
\end{proof}

\begin{corollary}  \label{cor:radius_bound_ap}
After running one step of line~4 of the Algorithm~\ref{algo:approximate_ppm} for the parameters $x_{t-1}$, $\lambda_t$, $\epsilon_t$ and $\mu$, we have the following bound:
\begin{align*}
\| x_t - x_{t-1} \| &\leq \sqrt{\frac{2 \epsilon_{t}}{2\lambda_t + \mu}  F(x_{t -1})} + \frac{\| \nabla F(x_{t-1}) \|}{2\lambda_t + \mu}  \\
\| x_t - x_{t-1} \| &\geq r_t^\star - \sqrt{\frac{2 \epsilon_{t}}{2\lambda_t + \mu}  F(x_{t -1})} 
\end{align*}
where $r_t^\star = \| x_{t-1} - x^\star_{\lambda_t} \|$.
\end{corollary}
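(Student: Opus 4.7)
My plan is to combine three ingredients: (i) strong convexity of the proximal objective $F_{\lambda_t,x_{t-1}}$, (ii) the multiplicative $\epsilon_t$-oracle guarantee, and (iii) the radius bound from Lemma~\ref{lem:radius_bound}, all tied together via the triangle inequality.

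First I would observe that since $F$ is $\mu$-strongly convex, the proximal objective $F_{\lambda_t,x_{t-1}}(x) = F(x) + \lambda_t\|x-x_{t-1}\|^2$ is $(\mu+2\lambda_t)$-strongly convex. Letting $x^\star_{\lambda_t}$ denote its true minimizer, strong convexity gives
\begin{equation*}
\tfrac{\mu+2\lambda_t}{2}\,\|x_t - x^\star_{\lambda_t}\|^2 \;\leq\; F_{\lambda_t,x_{t-1}}(x_t) - F_{\lambda_t,x_{t-1}}(x^\star_{\lambda_t}).
\end{equation*}
The multiplicative oracle guarantee bounds the right-hand side by $\epsilon_t\,F_{\lambda_t,x_{t-1}}(x^\star_{\lambda_t})$, and because $x^\star_{\lambda_t}$ minimizes the proximal objective, $F_{\lambda_t,x_{t-1}}(x^\star_{\lambda_t}) \leq F_{\lambda_t,x_{t-1}}(x_{t-1}) = F(x_{t-1})$ (here I am using that all $f_i$ and $\gamma$ are nonnegative so every quantity is nonnegative). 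Rearranging yields the key inner-step bound
\begin{equation*}
\|x_t - x^\star_{\lambda_t}\| \;\leq\; \sqrt{\tfrac{2\epsilon_t}{2\lambda_t+\mu}\,F(x_{t-1})}.
\end{equation*}

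Next, the triangle inequality gives $\|x_t - x_{t-1}\| \leq \|x_t - x^\star_{\lambda_t}\| + \|x^\star_{\lambda_t} - x_{t-1}\|$, and Lemma~\ref{lem:radius_bound} (applied with $y = x_{t-1}$) bounds the latter by $\tfrac{\|\nabla F(x_{t-1})\|}{2\lambda_t+\mu}$. Combining these produces the claimed upper bound. The reverse triangle inequality $\|x_t - x_{t-1}\| \geq \|x^\star_{\lambda_t} - x_{t-1}\| - \|x_t - x^\star_{\lambda_t}\| = r_t^\star - \|x_t - x^\star_{\lambda_t}\|$, combined with the same inner-step bound, gives the lower bound.

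This is essentially a routine assembly: the only subtle point to watch is the direction of the multiplicative-oracle inequality (it is multiplicative in $F_{\lambda_t,x_{t-1}}(x^\star_{\lambda_t})$, not in the suboptimality gap), and the fact that we can cleanly upper bound $F_{\lambda_t,x_{t-1}}(x^\star_{\lambda_t})$ by $F(x_{t-1})$ by evaluating at $x_{t-1}$ itself. With those two observations, the rest is mechanical.
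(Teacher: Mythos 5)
Your proof is correct and follows essentially the same route as the paper: strong convexity of the $(2\lambda_t+\mu)$-strongly convex proximal objective plus the multiplicative oracle to bound $\|x_t - x^\star_{\lambda_t}\|$, then the (reverse) triangle inequality together with Lemma~\ref{lem:radius_bound}. The only cosmetic difference is that the paper bounds $F^\star_{\lambda_t,x_{t-1}}$ via a second application of strong convexity (gaining a $-\epsilon_t r_t^2$ term it then discards), whereas you use the simpler minimality bound $F^\star_{\lambda_t,x_{t-1}} \leq F(x_{t-1})$, which suffices for the stated conclusion.
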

\begin{proof}

As from Lemma~\ref{lem:radius_bound}, we have 
\begin{align*}
\| x_{2\lambda_t, x_{t-1}}^\star  - x_{t-1} \|  \leq \frac{\| \nabla F(x_{t-1}) \|}{2 \lambda_t + \mu}.
\end{align*}

Let us denote $\| x_{\lambda_t, x_{t-1}}^\star  - x_{t-1} \|$ as $r_t$. Now, let us try to bound $\| x_{t} -  x_{\lambda_t, x_{t-1}}^\star \|$.
From the strong convexity and aproximation argument:
\begin{align*}
\| x_{t} -  x_{\lambda_t, x_{t-1}}^\star \|^2 \leq \frac{2}{2\lambda_t + \mu} \left(  F_{\lambda_t ,x_{t-1}}(x_t) - f^\star_{\lambda_t, x_{t-1}}\right) \l \leq  \frac{2 \epsilon_{t}}{2\lambda_t + \mu} f^\star_{\lambda_t, x_{t-1}}
\end{align*}
Now we can apply strong convexity argument one more time. 
$$f^\star_{\lambda_t, x_{t-1}} \leq F(x_{t -1}) - \frac{2\lambda_t + \mu}{2} r_t^2$$
Hence finally we have:
\begin{align}
\| x_{t} -  x_{\lambda_t, x_{t-1}}^\star \|^2 \leq \frac{2 \epsilon_{t}}{2\lambda_t + \mu}  F(x_{t -1})  - \epsilon_t r_t^2 \label{eq:radius_ap_bound}
\end{align}
Hence finally:
\begin{align*}
 r_t - \sqrt{\frac{2 \epsilon_{t}}{2\lambda_t + \mu}  F(x_{t -1})}  \leq  \| x_t - x_{t-1} \| \leq \sqrt{\frac{2 \epsilon_{t}}{2\lambda_t + \mu}  F(x_{t -1})} + r_t \leq \sqrt{\frac{2 \epsilon_{t}}{2\lambda_t + \mu}  F(x_{t -1})} + \frac{\| \nabla F(x_{t-1}) \|}{2\lambda_t + \mu}
\end{align*}
\end{proof}

\section{Approximate Proximal Point Method } \label{ap:app_ppm}

The following Lemma from~\ref{lem:frostig} is useful in proving the Theorem~\ref{thm:approximate_ppm_coreset}.
\begin{lemma}[Lemma 2.7 \cite{frostig2015regularizing}] \label{lem:frostig}
For all $y \in \mathbb{R}^d$ and $\lambda \geq 0$:
$$F(x^\star_{\lambda,y}) - F^\star \le F_{\lambda,y}^\star - F^\star \leq \frac{2\lambda}{\mu + 2\lambda} \left( F(y) - F^\star \right). $$
\end{lemma}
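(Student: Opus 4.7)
The plan is to split the claim into two pieces. The left inequality $F(x^\star_{\lambda,y}) - F^\star \le F_{\lambda,y}^\star - F^\star$ is immediate: by definition $F_{\lambda,y}^\star = F(x^\star_{\lambda,y}) + \lambda \|x^\star_{\lambda,y} - y\|_2^2$, and since $\lambda \ge 0$ the extra term is nonnegative, so $F(x^\star_{\lambda,y}) \le F_{\lambda,y}^\star$. Subtracting $F^\star$ from both sides finishes that half.

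For the right inequality, my plan is to exhibit a single good test point $\bar x$ and upper bound $F_{\lambda,y}(\bar x)$ directly. Since $x^\star_{\lambda,y}$ minimizes $F_{\lambda,y}$, any value $F_{\lambda,y}(\bar x)$ serves as an upper bound on $F_{\lambda,y}^\star$. The natural candidates are points on the segment between $y$ and $x^\star$, so set
\[
\bar x_t = (1-t)\, x^\star + t\, y \quad \text{for some } t\in[0,1].
\]
Then $\bar x_t - y = (1-t)(x^\star - y)$, and $\mu$-strong convexity of $F$ gives the sharpened Jensen-type bound
\[
F(\bar x_t) \le (1-t) F^\star + t F(y) - \tfrac{\mu\, t(1-t)}{2} \|y - x^\star\|_2^2 .
\]
Adding $\lambda\|\bar x_t - y\|_2^2 = \lambda(1-t)^2 \|y - x^\star\|_2^2$ yields
\[
F_{\lambda,y}(\bar x_t) - F^\star \;\le\; t\,(F(y) - F^\star) + (1-t)\bigl[\lambda(1-t) - \tfrac{\mu t}{2}\bigr] \|y - x^\star\|_2^2 .
\]

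The whole point of the argument is then to choose $t$ so that the awkward second term (which carries the unknown quantity $\|y-x^\star\|_2^2$) drops out entirely. Setting $\lambda(1-t) - \tfrac{\mu t}{2} = 0$ gives exactly $t^\star = \tfrac{2\lambda}{\mu + 2\lambda}$, so $1 - t^\star = \tfrac{\mu}{\mu+2\lambda}$. Plugging in, the quadratic term vanishes and we are left with
\[
F_{\lambda,y}^\star - F^\star \;\le\; F_{\lambda,y}(\bar x_{t^\star}) - F^\star \;\le\; t^\star (F(y) - F^\star) \;=\; \tfrac{2\lambda}{\mu + 2\lambda}\,(F(y) - F^\star),
\]
which is the desired bound.

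The only real content is discovering the right $t^\star$; I expect this to be the main (but mild) obstacle, since a naive choice such as $t=1$ or the upper bound $\|y-x^\star\|_2^2 \le \tfrac{2}{\mu}(F(y)-F^\star)$ gives only the weaker rate $\tfrac{2\lambda}{\mu}(F(y)-F^\star)$. The correct convex combination is pinned down by balancing the strong-convexity deficit against the proximal penalty so that the $\|y-x^\star\|_2^2$ term cancels exactly. After that, no further estimate on $\|y-x^\star\|_2^2$ is needed and the proof is a few lines of algebra.
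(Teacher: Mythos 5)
Your proof is correct. The paper does not actually prove this lemma itself --- it is imported verbatim from \cite{frostig2015regularizing} --- but your argument (bounding $F_{\lambda,y}^\star$ by evaluating $F_{\lambda,y}$ at the interpolated point $(1-t^\star)x^\star + t^\star y$ with $t^\star = \tfrac{2\lambda}{\mu+2\lambda}$ chosen so that the strong-convexity deficit exactly cancels the proximal penalty $\lambda(1-t^\star)^2\|y-x^\star\|_2^2$) is precisely the standard proof of the cited result, and every step checks out.
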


\begin{proof}[Proof of Theorem~\ref{thm:approximate_ppm_coreset}]
Let us assume that $x_{\lambda,x}^\star  = \argmin _{y \in \mathbb{R}^d} F_{\lambda,x}(y)$, then from the Lemma~\ref{lem:frostig}, 
\begin{align}\label{eq:needThisOne}
\begin{split}
    &F_{\lambda,x}^\star - F^\star \leq \frac{2\lambda}{\mu + 2\lambda} \left( F(x) - F^\star \right)  \\
   \Rightarrow ~~& F(x_{\lambda,x}^\star) - F^\star \leq \frac{2\lambda}{\mu + 2\lambda} \left( F(x) - F^\star \right)
   \end{split}
\end{align}
Last equation comes from the fact that $F_{\lambda,x}^\star = F(x_{\lambda,x}^\star) + {\lambda} \|  x^\star -x \|^2$. 

We know that $$F_{\lambda,y}(x_{\lambda,y}^\star) \leq f\big(\mathcal{P}_{F_{\lambda,y}}(x)\big) \leq (1+\epsilon) F_{\lambda,y}(x_{\lambda,y}^\star)~\forall ~y \in \mathbb{R}^d.$$
We can get the upper bound on the true minimizer using this black-box oracle in terms of the approximate solution. We have:
\begin{align}
    F_{\lambda_T,x_{t-1}}^\star\leq  F_{\lambda_T,x_{t-1}} (x_t) \label{eq:upper_bnd_appro_true}
\end{align}
From Lemma~\ref{lem:frostig} and black-box oracle ,  for any $t \in [T]$ we have
\begin{align}
\begin{split} \label{eq:approximate_progress}
    F_{\lambda_t,x_{t-1}}(x_t) - F^\star &= F_{\lambda_t,x_{t-1}}(x_t)  - F_{\lambda_t,x_{t-1}}^\star + F_{\lambda_t,x_{t-1}}^\star - F^\star \\
    & \leq \epsilon_t ~F_{\lambda,x_{t-1}}^\star + \frac{2\lambda_t}{\mu + 2\lambda_t} \left( F(x_{t-1}) - F^\star \right) \\
     & \leq {\epsilon_t} ~F_{\lambda_t,x_{t-1}}(x_t) + \frac{2\lambda_t}{\mu + 2\lambda_t} \left( F(x_{t-1}) - F^\star \right)  
\end{split}
\end{align}
which leads us to 
\begin{align}
    \begin{split}
       &(1 - \epsilon_t) F_{\lambda_t,x_{t-1}}(x_t)  - F^\star \leq \frac{2\lambda_t}{\mu + 2\lambda_t} \left( F(x_{t-1}) - F^\star \right)  \\
       \Rightarrow~&  (1 - \epsilon_t) F_{\lambda_t,x_{t-1}}(x_t)  - (1 - \epsilon_t) F^\star \leq \frac{2\lambda_t}{\mu + 2\lambda_t} \left( F(x_{t-1}) - F^\star \right) + {\epsilon_t} F^\star \\
       \Rightarrow~&  F_{\lambda_t,x_{t-1}}(x_t)    - F^\star \leq \frac{1 } {1 -  \epsilon_t}\frac{2\lambda_t}{\mu + 2\lambda_t} \left( F(x_{t-1}) - F^\star \right) + \frac{  \epsilon_t} {1 -  \epsilon_t} F^\star
    \end{split}
\end{align}
Now since,  $$ F_{\lambda_t,x_{t-1}}(x_t) = F(x_t) + {\lambda_t} \| x_t - x_{t-1} \|^2 \geq F(x_t) $$
Hence, finally we have:
\begin{align}
    F(x_t)    - F^\star &\leq \frac{1 } {1 -  \epsilon_t}\frac{2\lambda_t}{\mu + 2\lambda_t} \left( F(x_{t-1}) - F^\star \right) + \frac{  \epsilon_t} {1 -  \epsilon_t} F^\star \notag \\
    &\leq  (1+ 2\epsilon_t) \frac{2\lambda_t}{\mu + 2\lambda_t} \left( F(x_{t-1}) - F^\star \right)  + 2\epsilon_t F^\star  \label{eq:recursion_1}   
\end{align}
whenever $\epsilon_t \leq 1/2$.
Now we can do recursion on the equation~\eqref{eq:recursion_1}:
\begin{align}
    F(x_T)    - F^\star \leq \underbrace{\Bigg[\prod_{t=1}^T  (1+ 2\epsilon_t) \frac{2\lambda_t}{\mu+2\lambda_t} \Bigg] }_{:\text{linear rate}}\left(F(x_0) - F^\star \right) + F^\star \underbrace{ \left[ \mathlarger{\mathlarger{\sum}}_{t=1}^T {2\epsilon_t} \prod_{j = t+1}^T{(1 + 2 \epsilon_j)} \frac{2\lambda_j}{\mu + 2\lambda_j}  \right]}_{:= \delta} \label{eq:last_thm11}
\end{align}

\end{proof}

{
\begin{algorithm}[H]
\begin{algorithmic}[1] 
  \STATE  \textbf{input} $x_0 \in \mathbb{R}^d$, $\lambda_t > 0 ~\forall~ {t\in [T]}$.
  \STATE \textbf{for}~ {$t=1\dots \text{ T}$} \textbf{do}
  \STATE \quad  $x^\star_{\lambda_t, x_{t-1}} \gets \argmin F(x ) + {\lambda_t} \|x - x_{t-1} \|^2 $ \label{alg_lin:exact_g_lin3_smooth}
  \STATE \quad  $x_t \gets x^\star_{\lambda_t, x_{t-1}} $
    \STATE \textbf{end for}
  \STATE \textbf{output} $x_T$ 
\end{algorithmic}
 \caption{Proximal-Point Method}
 \label{algo:exact_ppm}
\end{algorithm}
}
\begin{lemma}[Proposition 3.1.6 \cite{tichatschkeproximal}] \label{lem:smooth_exact_ppm}
Let $F$ be lower semi-continuous convex function then for any $x$ in the domain and for any $t \geq 1$ following relation holds for iterates in Algorithm~\ref{algo:exact_ppm}:
\begin{align*}
\frac{1}{\lambda_{t}} \left(F(x) - F(x^\star_{\lambda_t, x_{t-1}}) \right) \geq \| x_{t-1} - x^\star_{\lambda_t, x_{t-1}} \|^2 + \| x  - x^\star_{\lambda_t, x_{t-1}}\|^2 - \| x - x_{t-1} \|^2.
\end{align*}

\end{lemma}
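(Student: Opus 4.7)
The plan is to combine the first-order optimality of the exact proximal iterate with the standard subgradient inequality for a convex function, and then close the gap with a three-point (polarization) identity. The derivation is essentially unchanged from the classical proof in the literature, so the main question is just how to present it cleanly without assuming differentiability of $F$.

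First I would invoke the optimality condition. By Algorithm~\ref{algo:exact_ppm}, $x^\star_{\lambda_t,x_{t-1}}$ is the (unique) minimizer of the strongly convex function $y \mapsto F(y) + \lambda_t\,\|y - x_{t-1}\|^2$, which is finite because $F$ is lsc convex and $\lambda_t > 0$. Subdifferential calculus then yields $0 \in \partial F(x^\star_{\lambda_t,x_{t-1}}) + 2\lambda_t(x^\star_{\lambda_t,x_{t-1}} - x_{t-1})$, so that the vector $g := 2\lambda_t\,(x_{t-1} - x^\star_{\lambda_t,x_{t-1}})$ belongs to $\partial F(x^\star_{\lambda_t,x_{t-1}})$.

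Next I would apply the subgradient inequality at $x^\star_{\lambda_t,x_{t-1}}$ tested against an arbitrary point $x$ in the domain. This gives
\[
F(x) \;\ge\; F(x^\star_{\lambda_t,x_{t-1}}) + \langle g,\, x - x^\star_{\lambda_t,x_{t-1}}\rangle
\;=\; F(x^\star_{\lambda_t,x_{t-1}}) + 2\lambda_t\,\langle x_{t-1} - x^\star_{\lambda_t,x_{t-1}},\, x - x^\star_{\lambda_t,x_{t-1}}\rangle.
\]
Dividing by $\lambda_t$ reduces the lemma to showing that $2\langle x_{t-1} - x^\star_{\lambda_t,x_{t-1}},\, x - x^\star_{\lambda_t,x_{t-1}}\rangle$ equals $\|x_{t-1} - x^\star_{\lambda_t,x_{t-1}}\|^2 + \|x - x^\star_{\lambda_t,x_{t-1}}\|^2 - \|x - x_{t-1}\|^2$.

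Finally, the remaining identity is just the polarization formula: expanding $\|x - x_{t-1}\|^2 = \|(x - x^\star_{\lambda_t,x_{t-1}}) - (x_{t-1} - x^\star_{\lambda_t,x_{t-1}})\|^2$ and rearranging gives exactly the needed cross-term. Substituting back yields the claim. There is no real obstacle; the only point deserving care is the use of a subgradient rather than a gradient at $x^\star_{\lambda_t,x_{t-1}}$, which is legitimate because $F$ is assumed only lower semi-continuous and convex, and the optimality condition for the strongly convex proximal subproblem guarantees that a subgradient of the required form exists.
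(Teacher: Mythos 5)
Your proof is correct. Note that the paper does not prove this lemma at all -- it imports it verbatim as Proposition 3.1.6 of the cited reference -- so your argument supplies a self-contained derivation via the standard route: the optimality condition $2\lambda_t\,(x_{t-1}-x^\star_{\lambda_t,x_{t-1}})\in\partial F(x^\star_{\lambda_t,x_{t-1}})$ for the penalized subproblem (correctly accounting for the fact that the penalty in Algorithm~\ref{algo:exact_ppm} is $\lambda_t\|x-x_{t-1}\|^2$ rather than $\tfrac{\lambda_t}{2}\|x-x_{t-1}\|^2$, which is why the factor $2$ appears and the final bound carries $1/\lambda_t$), followed by the subgradient inequality and the three-point polarization identity. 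All three steps check out.
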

In the next lemma, we characterize the result provided in lemma~\ref{lem:smooth_exact_ppm} for the $\epsilon$-approximate oracle. 

 \begin{lemma} \label{lem:smooth_appt_ppm}
 Let $F$ be lower semi-continuous convex function then for $x^\star$, the minimizer of $F$ and for any $t \geq 1$ and $\epsilon \leq 1/2$, following relation holds for iterates in Algorithm~\ref{algo:approximate_ppm}:
\begin{align*}
\frac{1 }{\lambda_{t}}  \left(F_{\lambda_t,x_{t-1}}(x_t) - F^\star  \right) &\leq \frac{2}{\lambda_{t}} \left(F_{\lambda_t,x_{t-1}}(x^\star_{\lambda_t, x_{t-1}}) - F^\star \right) + \frac{\epsilon_t}{(1 - \epsilon_t)\lambda_t}  F^\star  \\
&\leq 2\| x^\star - x_{t-1} \|^2 - 2\| x^\star  -  x^\star_{\lambda_t, x_{t-1}}\|^2    + \frac{\epsilon_t}{(1 - \epsilon_t)\lambda_t}  F^\star
\end{align*}
\end{lemma}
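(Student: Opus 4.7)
The plan is to prove the two inequalities separately, the first being a direct consequence of the $\epsilon$-oracle property combined with $\epsilon_t \le 1/2$, and the second being an algebraic rearrangement of Lemma~\ref{lem:smooth_exact_ppm} applied at $x = x^\star$.

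For the first inequality, I would start from the multiplicative oracle guarantee
\begin{equation*}
F_{\lambda_t,x_{t-1}}(x_t) \le (1+\epsilon_t)\, F_{\lambda_t,x_{t-1}}(x^\star_{\lambda_t,x_{t-1}}),
\end{equation*}
subtract $F^\star$ from both sides, and split the right-hand side as $(F_{\lambda_t,x_{t-1}}^\star - F^\star) + \epsilon_t\, F_{\lambda_t,x_{t-1}}^\star$. Since $F_{\lambda_t,x_{t-1}}^\star \le F_{\lambda_t,x_{t-1}}(x_t)$, I would upper bound the last term by $\epsilon_t F_{\lambda_t,x_{t-1}}(x_t)$ and move it to the left-hand side, obtaining $(1-\epsilon_t)\bigl(F_{\lambda_t,x_{t-1}}(x_t) - F^\star\bigr) \le (F_{\lambda_t,x_{t-1}}^\star - F^\star) + \epsilon_t F^\star$. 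Dividing by $(1-\epsilon_t)\lambda_t$ and using $1/(1-\epsilon_t) \le 2$ (from $\epsilon_t \le 1/2$) to bound the leading coefficient yields the first inequality of the lemma.

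For the second inequality, I would apply Lemma~\ref{lem:smooth_exact_ppm} with $x = x^\star$, which gives
\begin{equation*}
\tfrac{1}{\lambda_t}\bigl(F^\star - F(x^\star_{\lambda_t,x_{t-1}})\bigr) \ge \|x_{t-1} - x^\star_{\lambda_t,x_{t-1}}\|^2 + \|x^\star - x^\star_{\lambda_t,x_{t-1}}\|^2 - \|x^\star - x_{t-1}\|^2.
\end{equation*}
Rearranging and using the identity $F_{\lambda_t,x_{t-1}}^\star - F^\star = \bigl(F(x^\star_{\lambda_t,x_{t-1}}) - F^\star\bigr) + \lambda_t\|x^\star_{\lambda_t,x_{t-1}} - x_{t-1}\|^2$, the term $\|x_{t-1} - x^\star_{\lambda_t,x_{t-1}}\|^2$ cancels, leaving $\tfrac{1}{\lambda_t}(F_{\lambda_t,x_{t-1}}^\star - F^\star) \le \|x^\star - x_{t-1}\|^2 - \|x^\star - x^\star_{\lambda_t,x_{t-1}}\|^2$. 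Multiplying by $2$ gives the second inequality, and chaining with the first completes the proof.

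The only non-routine step is the bookkeeping in the first inequality: one has to be careful to use $F_{\lambda_t,x_{t-1}}^\star \le F_{\lambda_t,x_{t-1}}(x_t)$ (rather than the sharper reverse bound) on the $\epsilon_t$-term in order to cleanly absorb it into the LHS and produce the denominator $1-\epsilon_t$; the factor of $2$ then appears only after invoking $\epsilon_t \le 1/2$, which explains the hypothesis. Everything else is a direct substitution.
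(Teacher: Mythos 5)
Your proposal is correct and follows essentially the same route as the paper: the first inequality comes from the multiplicative oracle guarantee, absorbing the $\epsilon_t F_{\lambda_t,x_{t-1}}(x_t)$ term into the left-hand side to produce the $1-\epsilon_t$ denominator and then using $\epsilon_t \le 1/2$ together with $F_{\lambda_t,x_{t-1}}^\star \ge F^\star$ to replace $1/(1-\epsilon_t)$ by $2$; the second comes from Lemma~\ref{lem:smooth_exact_ppm} at $x = x^\star$ with the $\|x_{t-1} - x^\star_{\lambda_t,x_{t-1}}\|^2$ term cancelling against the proximal penalty, exactly as in the paper's equation~\eqref{eq:smooth_appm_quad}. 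No gaps.
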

\begin{proof}
We have $x_t = {P}_{f_{\lambda_t,x_{t-1}}}(x)$ as defined in line~3 of Algorithm~\ref{algo:approximate_ppm} where  $\mathcal{P}_f$ is \textit{multiplicative }$\epsilon_t$-oracle. \\ 
From the oracle we know that $F_{\lambda,x_{t-1}}(x^\star_{\lambda_t, x_{t-1}}) \leq F_{\lambda,x_{t-1}}(x_t) \leq (1 +\epsilon_t) F_{\lambda,x_{t-1}}(x^\star_{\lambda_t, x_{t-1}}) $. Next we use the result from Lemma~\ref{lem:smooth_exact_ppm} where we use $x  = x^\star = \argmin_{x} F(x)$. We denote $F^\star$ with $F(x^\star)$.
\begin{align}
\begin{split} \label{eq:smooth_appm}
\frac{1}{\lambda_{t}} \left(F^\star - F(x^\star_{\lambda_t, x_{t-1}}) \right) &\geq \| x_{t-1} - x^\star_{\lambda_t, x_{t-1}} \|^2 + \| x^\star  - x^\star_{\lambda_t, x_{t-1}}\|^2 - \| x^\star - x_{t-1} \|^2 \\
&\geq \| x_{t-1} - x^\star_{\lambda_t, x_{t-1}} \|^2 + \| x^\star  - x_t + x_t - x^\star_{\lambda_t, x_{t-1}}\|^2 - \| x^\star - x_{t-1} \|^2 \\
\end{split}
\end{align}

The last equation essentially tells us the following:
\begin{align}
\frac{1}{\lambda_{t}} \left(F^\star - F_{\lambda_t,x_{t-1}}(x^\star_{\lambda_t, x_{t-1}}) \right) &\geq \| x^\star  -  x^\star_{\lambda_t, x_{t-1}}\|^2 - \| x^\star - x_{t-1} \|^2 \label{eq:smooth_appm_quad}
\end{align}

From the $\epsilon_t$-oracle we do have:
\begin{align*}
\left(  F_{\lambda_t ,x_{t-1}}(x_t) - F_{\lambda,x_{t-1}}(x^\star_{\lambda_t, x_{t-1}})\right) \leq \epsilon_{t} F_{\lambda,x_{t-1}}(x^\star_{\lambda_t, x_{t-1}}) \\
\end{align*}
Hence
\begin{align}
\begin{split} \label{eq:smooth_approx}
\frac{1}{\lambda_{t}} \left(F_{\lambda_t,x_{t-1}}(x_t) - F^\star  \right) &= \frac{1}{\lambda_{t}} \left(F_{\lambda_t,x_{t-1}}(x_t) - F_{\lambda_t,x_{t-1}}(x^\star_{\lambda_t, x_{t-1}}) + F_{\lambda_t,x_{t-1}}(x^\star_{\lambda_t, x_{t-1}}) - F^\star  \right) \\
& = \frac{1}{\lambda_{t}} \Big[ \left(F_{\lambda_t,x_{t-1}}(x_t) - F_{\lambda_t,x_{t-1}}(x^\star_{\lambda_t, x_{t-1}})\right) + \left( F_{\lambda_t,x_{t-1}}(x^\star_{\lambda_t, x_{t-1}}) - F^\star \right) \Big] \\
& \leq \frac{1}{\lambda_{t}}  \Big[ \epsilon_t F_{\lambda_t,x_{t-1}}(x^\star_{\lambda_t, x_{t-1}}) + \left( F_{\lambda_t,x_{t-1}}(x^\star_{\lambda_t, x_{t-1}}) - F^\star \right)   \Big] \\
& \leq \frac{1}{\lambda_{t}}  \Big[ \epsilon_t F_{\lambda_t,x_{t-1}}(x_t) + \left( F_{\lambda_t,x_{t-1}}(x^\star_{\lambda_t, x_{t-1}}) - F^\star \right)   \Big] \\
\end{split}
\end{align}

From equations~\eqref{eq:smooth_appm_quad} and \eqref{eq:smooth_approx}, we have:
\begin{align*}
&\frac{1}{\lambda_{t}} \left((1 - \epsilon_t) F_{\lambda_t,x_{t-1}}(x_t) - F^\star  \right) \leq \frac{1}{\lambda_{t}} \left(F_{\lambda_t,x_{t-1}}(x^\star_{\lambda_t, x_{t-1}}) - F^\star \right)  \\
\Rightarrow~&\frac{1 (1 - \epsilon_t)}{\lambda_{t}}  \left(F_{\lambda_t,x_{t-1}}(x_t) - F^\star  \right) \leq \frac{1}{\lambda_{t}} \left(F_{\lambda_t,x_{t-1}}(x^\star_{\lambda_t, x_{t-1}}) - F^\star \right) + \frac{ \epsilon_t}{\lambda_t} F^\star \\ 
\Rightarrow~&\frac{1 }{\lambda_{t}}  \left(F_{\lambda_t,x_{t-1}}(x_t) - F^\star  \right) \leq \frac{1}{(1 - \epsilon_t) \lambda_{t}} \left(F_{\lambda_t,x_{t-1}}(x^\star_{\lambda_t, x_{t-1}}) - F^\star \right) + \frac{\epsilon_t}{(1 - \epsilon_t)\lambda_t} F^\star 
\end{align*}
If $\epsilon_t \leq 1/2 $, then from equations~\eqref{eq:smooth_appm_quad} and \eqref{eq:smooth_approx}, we have:
\begin{align*}
\frac{1 }{\lambda_{t}}  \left(F_{\lambda_t,x_{t-1}}(x_t) - F^\star  \right) &\leq \frac{2}{\lambda_{t}} \left(F_{\lambda_t,x_{t-1}}(x^\star_{\lambda_t, x_{t-1}}) - F^\star \right) + \frac{ \epsilon_t}{(1 - \epsilon_t)\lambda_t}  F^\star  \\
&\leq 2\| x^\star - x_{t-1} \|^2 - 2\| x^\star  -  x^\star_{\lambda_t, x_{t-1}}\|^2    + \frac{ \epsilon_t}{(1 - \epsilon_t)\lambda_t}  F^\star
\end{align*}

\end{proof}

\begin{lemma} \label{lem:almost_final_smooth}
For a lower semi-continuous convex function $F$ at any and  for any $t \geq 1$ and $\epsilon \leq 1/2$, following relation holds for iterates after $T$ iterations in Algorithm~\ref{algo:approximate_ppm}:
\begin{align*}
\sum_{t=1}^T\frac{1 }{\lambda_{t}}  \left(F_{\lambda_t,x_{t-1}}(x_t) - F^\star  \right) \leq \frac{2}{(1-\epsilon)}\| x^\star - x_{0} \|^2 + \sum_{t=1}^T \frac{3\epsilon}{((1-\epsilon))\lambda_t} ~F^\star 
\end{align*}
\end{lemma}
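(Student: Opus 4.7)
The plan is to derive a per-iteration recursive bound that telescopes cleanly upon summing from $t=1$ to $T$. The starting point is the inequality already established in Lemma~\ref{lem:smooth_appt_ppm}, namely
\begin{equation*}
\tfrac{1}{\lambda_t}\bigl(F_{\lambda_t,x_{t-1}}(x_t) - F^\star\bigr) \le 2\|x^\star - x_{t-1}\|^2 - 2\|x^\star - x^\star_{\lambda_t,x_{t-1}}\|^2 + \tfrac{\epsilon}{(1-\epsilon)\lambda_t}F^\star.
\end{equation*}
The distance decrement on the right-hand side is stated in terms of the \emph{exact} proximal minimizer $x^\star_{\lambda_t,x_{t-1}}$ rather than the approximate iterate $x_t$, so it does not telescope across $t$ as written. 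The task is to rewrite this decrement in terms of $\|x^\star - x_t\|^2$ while absorbing the approximation slack into the controllable error terms.

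To carry out this substitution I would apply $\|x^\star - x_t\|^2 \le (1+\alpha)\|x^\star - x^\star_{\lambda_t,x_{t-1}}\|^2 + (1+1/\alpha)\|x_t - x^\star_{\lambda_t,x_{t-1}}\|^2$ for a parameter $\alpha > 0$ to be chosen, and control the residual $\|x_t - x^\star_{\lambda_t,x_{t-1}}\|^2$ by combining the $2\lambda_t$-strong convexity of $F_{\lambda_t,x_{t-1}}$ with the multiplicative $\epsilon$-oracle guarantee:
\begin{equation*}
\lambda_t\|x_t - x^\star_{\lambda_t,x_{t-1}}\|^2 \le F_{\lambda_t,x_{t-1}}(x_t) - F_{\lambda_t,x_{t-1}}(x^\star_{\lambda_t,x_{t-1}}) \le \epsilon\, F_{\lambda_t,x_{t-1}}(x^\star_{\lambda_t,x_{t-1}}) \le \epsilon\bigl(F^\star + \lambda_t\|x^\star - x_{t-1}\|^2\bigr),
\end{equation*}
where the last step uses $F_{\lambda_t,x_{t-1}}(x^\star_{\lambda_t,x_{t-1}}) \le F_{\lambda_t,x_{t-1}}(x^\star) = F^\star + \lambda_t\|x^\star - x_{t-1}\|^2$.

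Combining these inputs, I expect each summand to satisfy
\begin{equation*}
\tfrac{1}{\lambda_t}\bigl(F_{\lambda_t,x_{t-1}}(x_t) - F^\star\bigr) \le \tfrac{2}{1-\epsilon}\bigl[\|x^\star - x_{t-1}\|^2 - \|x^\star - x_t\|^2\bigr] + \tfrac{3\epsilon}{(1-\epsilon)\lambda_t}F^\star,
\end{equation*}
with matching coefficients in front of the two distance terms so that summation telescopes to $\tfrac{2}{1-\epsilon}\bigl(\|x^\star - x_0\|^2 - \|x^\star - x_T\|^2\bigr) \le \tfrac{2}{1-\epsilon}\|x^\star - x_0\|^2$, exactly the advertised bound. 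The residual $\epsilon\|x^\star - x_{t-1}\|^2$ contribution from the strong-convexity estimate is what forces the $\tfrac{1}{1-\epsilon}$ inflation in the leading constant.

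The main obstacle is the coefficient bookkeeping in the displayed per-iteration bound: the naive triangle inequality (taking $\alpha=1$) puts a coefficient of $2$ on $\|x^\star - x_{t-1}\|^2$ but only $1$ on $\|x^\star - x_t\|^2$, which would destroy the telescope. The delicate step will be to choose $\alpha$ of order $\epsilon$ so that (i) the coefficient on $\|x^\star - x_t\|^2$ aligns with the one on $\|x^\star - x_{t-1}\|^2$ after renormalization by $\tfrac{1}{1-\epsilon}$, and (ii) the $(1+1/\alpha)$ blow-up multiplying $\|x_t - x^\star_{\lambda_t,x_{t-1}}\|^2$ is cancelled by the explicit $\epsilon$ factor coming from the oracle, so that all residual pieces get absorbed either into the $\tfrac{3\epsilon F^\star}{(1-\epsilon)\lambda_t}$ slack or into the global $\tfrac{2}{1-\epsilon}$ multiplier. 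The assumption $\epsilon \le 1/2$ will be essential throughout to bound $\tfrac{1}{1-\epsilon} \le 2$ and $\tfrac{\epsilon}{1-\epsilon} \le 2\epsilon$ and thereby keep the final constants as stated.
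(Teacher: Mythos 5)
Your setup is sound up to the last step, but the coefficient matching you defer to ``the delicate step'' cannot actually be carried out, and this is a genuine gap rather than bookkeeping. To convert the decrement $-2\|x^\star - x^\star_{\lambda_t,x_{t-1}}\|_2^2$ from Lemma~\ref{lem:smooth_appt_ppm} into $-\tfrac{2}{1-\epsilon}\|x^\star - x_t\|_2^2$ you need an inequality of the form
\begin{equation*}
\|x^\star - x_t\|_2^2 \;\le\; (1-\epsilon)\,\|x^\star - x^\star_{\lambda_t,x_{t-1}}\|_2^2 \;+\; (\text{terms controlled by } \epsilon F^\star/\lambda_t),
\end{equation*}
i.e.\ a Young-type bound with leading coefficient \emph{strictly less than} $1$. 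But for any $\alpha>0$ the inequality you invoke gives leading coefficient $1+\alpha>1$, so after substitution the coefficient on the incoming term $\|x^\star-x_{t-1}\|_2^2$ (which is $2$ plus the $\epsilon\|x^\star-x_{t-1}\|_2^2$ residual from the oracle bound) always strictly exceeds the coefficient $\tfrac{2}{1+\alpha}<2$ on the outgoing term $\|x^\star-x_t\|_2^2$. Summing then leaves an uncancelled remainder of order $(\alpha+\epsilon)\sum_{t}\|x^\star - x_t\|_2^2$ (minimized near $\alpha=\sqrt{\epsilon}$ but never zero), and for a merely convex $F$ there is no way to absorb $\|x^\star-x_t\|_2^2$ into the $\epsilon F^\star/\lambda_t$ slack. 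So the displayed per-iteration inequality you ``expect'' is not obtainable by this route, and the telescope does not close.

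The paper avoids this by reversing the order of operations: it first telescopes the \emph{exact} proximal inequality of Lemma~\ref{lem:smooth_exact_ppm} evaluated at $x=x^\star$, so that the telescoping coefficients are exactly $1$ on both distance terms, and only afterwards patches the per-step mismatch $\|x^\star - x_t\|_2^2 - \|x^\star - x^\star_{\lambda_t,x_{t-1}}\|_2^2$ (via Corollary~\ref{cor:radius_bound_ap}) and converts $F_{\lambda_t,x_{t-1}}(x^\star_{\lambda_t,x_{t-1}})$ to $F_{\lambda_t,x_{t-1}}(x_t)$ (via Lemma~\ref{lem:smooth_appt_ppm}), confining all approximation error to additive $\epsilon F^\star/\lambda_t$ terms that are then cleaned up by a single division by $1-\epsilon$. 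If you want to salvage your iterate-level telescope, you would need a bound on the difference of squares itself --- including the cross term $2\langle x^\star - x^\star_{\lambda_t,x_{t-1}},\, x^\star_{\lambda_t,x_{t-1}} - x_t\rangle$ --- in terms of $\epsilon F_{\lambda_t,x_{t-1}}^\star/\lambda_t$, not merely a bound on $\|x_t - x^\star_{\lambda_t,x_{t-1}}\|_2^2$; that is exactly the content of the step the paper delegates to Corollary~\ref{cor:radius_bound_ap}, and it is the piece your proposal is missing.
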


\begin{proof}
We know that:
\begin{align}
\frac{1}{\lambda_{t}} \left(F_{\lambda_t,x_{t-1}}(x^\star_{\lambda_t, x_{t-1}}) - F^\star \right) \leq   \| x^\star - x_{t-1} \|^2 - \| x^\star  -  x^\star_{\lambda_t, x_{t-1}}\|^2 \label{eq:eq_basic_smooth}
\end{align}
We can however sum the equation~\eqref{eq:eq_basic_smooth} for $t=1$ till T and we get:
\begin{align}
\begin{split}  \label{eq:recur_sum_smooth}
\sum_{t=1}^T\frac{1}{\lambda_{t}} \left(F_{\lambda_t,x_{t-1}}(x^\star_{\lambda_t, x_{t-1}}) - F^\star \right)& \leq  \sum_{t=1}^T \Big[ \| x^\star - x_{t-1} \|^2 - \| x^\star  -  x^\star_{\lambda_t, x_{t-1}}\|^2 \Big] \\
&= \| x^\star - x_{0} \|^2 + \sum_{t=1}^{T-1} \Big[ \| x^\star - x_{t} \|^2 - \| x^\star  -  x^\star_{\lambda_t, x_{t-1}}\|^2\Big]  \\ 
& \qquad \qquad  \qquad \qquad  \qquad \qquad  \qquad  - \| x^\star  -  x^\star_{\lambda_t, x_{T-1}}\|^2 \\
&\leq  \| x^\star - x_{0} \|^2  + \sum_{t=1}^{T} \Big[ \| x^\star - x_{t} \|^2 - \| x^\star  -  x^\star_{\lambda_t, x_{t-1}}\|^2\Big]  
\end{split}
\end{align}

In equation~\eqref{eq:recur_sum_smooth}, we can use Corollary~\ref{cor:radius_bound_ap},
\begin{align*}
\| x^\star - x_{t} \|^2 - \| x^\star  -  x^\star_{\lambda_t, x_{t-1}}\|^2 \leq  \frac{ \epsilon_{t}}{\lambda_t }  F_{\lambda_t,x_{t-1}}(x^\star_{\lambda_t, x_{t-1}}).
\end{align*}
Hence,
\begin{align*}
&\sum_{t=1}^T\frac{1}{\lambda_{t}} \left(F_{\lambda_t,x_{t-1}}(x^\star_{\lambda_t, x_{t-1}}) - F^\star \right) \leq  \| x^\star - x_{0} \|^2   + \sum_{t=1}^{T} \frac{ \epsilon_{t}}{\lambda_t }  F_{\lambda_t,x_{t-1}}(x^\star_{\lambda_t, x_{t-1}}) \\
\Rightarrow~& \sum_{t=1}^T\frac{1}{\lambda_{t}} \left((1-\epsilon_t)F_{\lambda_t,x_{t-1}}(x^\star_{\lambda_t, x_{t-1}}) - F^\star \right) \leq  \| x^\star - x_{0} \|^2  \\
\Rightarrow~& \sum_{t=1}^T\frac{(1-\epsilon_t)}{\lambda_{t}} \left(F_{\lambda_t,x_{t-1}}(x^\star_{\lambda_t, x_{t-1}}) - F^\star \right) \leq  \| x^\star - x_{0} \|^2 + \sum_{t=1}^T \frac{ \epsilon_t}{\lambda_t} ~F^\star 
\end{align*}

Now, if we choose $\epsilon_t = \epsilon$ for all $t$ then we have:
\begin{align}
\sum_{t=1}^T\frac{1}{\lambda_{t}} \left(F_{\lambda_t,x_{t-1}}(x^\star_{\lambda_t, x_{t-1}}) - F^\star \right) \leq  \frac{1}{(1-\epsilon)}\| x^\star - x_{0} \|^2 + \sum_{t=1}^T \frac{ \epsilon}{((1-\epsilon))\lambda_t} ~F^\star  \label{eq:error_Smooth_F_lambda}
\end{align}

From the previous lemma~\ref{lem:smooth_appt_ppm}, we have 
\begin{align}
\frac{1 }{\lambda_{t}}  \left(F_{\lambda_t,x_{t-1}}(x_t) - F^\star  \right) &\leq \frac{2}{\lambda_{t}} \left(F_{\lambda_t,x_{t-1}}(x^\star_{\lambda_t, x_{t-1}}) - F^\star \right) + \frac{\epsilon_t}{(1 - \epsilon_t)\lambda_t}  F^\star \label{eq:prev_lemma}
\end{align}

Summing up the equation~\eqref{eq:prev_lemma} for $t=1$ to $T$ and for $\epsilon_t = \epsilon$, we have:
\begin{align}
\sum_{t=1}^T \frac{1 }{\lambda_{t}}  \left(F_{\lambda_t,x_{t-1}}(x_t) - F^\star  \right) &\leq \sum_{t=1}^T \frac{2}{\lambda_{t}} \left(F_{\lambda_t,x_{t-1}}(x^\star_{\lambda_t, x_{t-1}}) - F^\star \right) + \sum_{t=1}^T \frac{ \epsilon}{(1 - \epsilon)\lambda_t} ~ F^\star  \label{eq:penultimate_smooth_appm}
\end{align}

Now from equations~\eqref{eq:error_Smooth_F_lambda} and \eqref{eq:penultimate_smooth_appm}, 
\begin{align}
\sum_{t=1}^T \frac{1 }{\lambda_{t}}  \left(F_{\lambda_t,x_{t-1}}(x_t) - F^\star  \right) &\leq \sum_{t=1}^T \frac{2}{\lambda_{t}} \left(F_{\lambda_t,x_{t-1}}(x^\star_{\lambda_t, x_{t-1}}) - F^\star \right) + \sum_{t=1}^T \frac{ \epsilon}{(1 - \epsilon)\lambda_t} ~ F^\star  \\
&\leq  \frac{2}{(1-\epsilon)}\| x^\star - x_{0} \|^2 + \sum_{t=1}^T \frac{3\epsilon}{((1-\epsilon))\lambda_t} ~F^\star 
\end{align}

\end{proof}

\begin{proof}[Proof of Theorem~\ref{thm:approximate_ppm_coreset_smooth}]

From the previous Lemma~\ref{lem:almost_final_smooth}, we have:
\begin{align*}
\sum_{t=1}^T\frac{1 }{\lambda_{t}}  \left(F_{\lambda_t,x_{t-1}}(x_t) - F^\star  \right) \leq \frac{2}{(1-\epsilon)}\| x^\star - x_{0} \|^2 + \sum_{t=1}^T \frac{3\epsilon}{((1-\epsilon))\lambda_t} ~F^\star 
\end{align*}
We assume that $F(x_t) \leq F(x_{t-1})$ for all $t$. This is fine to assume as we can always do the resampling if failed once. And also:
\begin{align*}
\frac{1 }{\lambda_{t}}  \left(F(x_t) - F^\star  \right) \leq \frac{1 }{\lambda_{t}}  \left(F_{\lambda_t,x_{t-1}}(x_t) - F^\star  \right)~~\text{for all } t.
\end{align*}
Hence, 
\begin{align*}
F(x_T) - F^\star  \leq \frac{2}{(1-\epsilon)} \frac{\| x^\star - x_{0} \|^2}{\sum_{t=1}^T \frac{1}{\lambda_t}} + \frac{3\epsilon}{1-\epsilon} F^\star.
\end{align*}

\end{proof}

\section{Adaptive Stochastic Trust Region Method} \label{ap:quad_approx}

\begin{algorithm}[H]
\begin{algorithmic}[1] 
  \STATE  \textbf{input} $x_0 \in \mathbb{R}^d$, $\epsilon_0$, $\mu$ and $m > 0$.
  \STATE Compute $\|\nabla F(x_0)\|$, $F(x_0)$ and $C_0$
  \STATE \textbf{for}~ {$t=1\dots \text{ T}$} \textbf{do}
  \STATE \quad Compute regularizer $\lambda_t$ using $\|\nabla f(x_{t-1})\|$, $\lambda_t$ and $\mu$. \label{alg_lin:exact_g_lin1}
  \STATE \quad Compute radius $r_t$ using $\|\nabla f(x_{t-1})\|$, $f(x_{t-1})$ and $C_{t-1}$. \label{alg_lin:exact_g_lin1}
  \STATE \quad Computer error parameter $\epsilon_t$ using $\lambda_t$ and $\mu$, the strong convexity of $F$.\label{alg_lin:exact_g_lin2}
  \STATE \quad Get $\tilde F_{\lambda_{t}, x_{t-1}}$ via Taylor Expansion.\label{alg_lin:exact_g_lin3}
  \STATE \quad Compute the sensitivity for $\tilde F_{\lambda_{t}, x_{t-1}}$ using Theorem \ref{thm:quad}.\label{alg_lin:exact_g_lin4}
  \STATE \quad Local sensitivity based sampling of $\tilde{F}_{\lambda_t,x_{t-1}}^s(x)$ from $\tilde{F}_{\lambda_t,x_{t-1}}(x)$.\label{alg_lin:exact_g_lin4}
   \STATE \quad $x_{t} \gets \argmin_{x \in B(r_t,x_{t-1})} F_{\lambda_t,x_{t-1}}^s(x)$.\label{alg_lin:exact_g_lin5}
    \STATE \quad  Compute $\|\nabla F(x_t)\|$, $F(x_t)$ and $C_t$.\label{alg_lin:exact_g_lin8}
   
    \STATE \textbf{end for}
  \STATE \textbf{output} $x_T$ 
\end{algorithmic}
 \caption{Adaptive Stochastic Trust Region Method}
 \label{algo:ada_trust_region}
\end{algorithm}

We here now provide the detailed statemnt of Theorem~\ref{thm:main} and then provide the proof for it. 
\begin{reptheorem}{thm:local_true_sens} 
For a given set of constants $C_k $, $\delta_k$ and $\tilde \epsilon_k  = \delta_k \frac{\mu}{\lambda_k + \mu}$ which is error tolerance for the square approximation of the function $F_{\lambda_{k},x_{k-1}}(x)$  for all $k \in [T]$, if $\lambda_{k+1}$ is chosen as :
$$ 2\lambda_{k+1} = \max\left( \sqrt{ \frac{ 4C_k\| \nabla F(x_k)\|^3}{\frac{1}{4c^2}  {\| \nabla F(x_k) \|^2} +  4 \tilde \delta_{k+1} \mu   \frac{F(x_k)}{3m} }   }- \mu , \mu \right),$$
then with probability $\geq 1/2$ the following holds:
\begin{align}
F(x_{k+1})  - F^\star \leq (1 + 2 \epsilon_{k+1}) \frac{2\lambda_{k+1}}{2\lambda_{k+1}+\mu} \left(F(x_k) - F^\star \right) + 2 \epsilon_{k+1} F^\star, \label{eq:main_thm_eq1}
\end{align}
where $\epsilon_{k+1} = 2\tilde{\epsilon}_{k+1}\left( 1  + \frac{1}{m} \right)~\forall~k$, $m$ and $c$ are positive constants. 

\end{reptheorem}

\begin{proof}[Proof of Theorem~\ref{thm:main}]
Let us first reiterate the notations:
\begin{align*}
\tilde{F}_{\lambda_{k+1},x_k}(x) = f(x_k) + (x - x_k)^\top A^\top \alpha_{x_k}+ \frac{\gamma}{2} \| x \|^2 + (x - x_k)^\top  A^\top H_{x_k} A (x - x_k)+ {\lambda}\| x - x_k \|^2. 
\end{align*}
 and $F_{\lambda_{k+1},x_k}(x) = \tilde{F}_{\lambda_{k+1},x_k} (x)+ {B_{x_{k}}} (x)\|x - x_k \|^3$. We can write $\tilde{F}_{\lambda_{k+1},x_k}(x)  = \frac{1}{n} \sum_{i=1}^n \tilde f_i(x^T a_i)$ where $ \tilde f_i(x^T a_i)$ is the quadratic approximation of $f_i(x^T a_i)$ around the point $x_k$.
We also define the upper bound on the radius $r_{k+1} = \frac{\| \nabla F(x_k)\|}{2\lambda_{k+1}+ \mu}$. Contribution in $B_{x_k}$ comes from each term $f_i$ \textit{i.e.} $B_{x_k}(x) = \frac{1}{n} \sum_{i=1}^n B_{x_k}^{(i)}(x)$. Let us assume that $x_{k+1}$ is the point, we get after minimizing the subset after sampling from the sensitivity of the quadratic approximation.  To make proof simpler in this section,  we assume $C_k^{(i)}$ as the upper bound on the absolute value of $B_{x_k}^{(i)} (x) ~ \forall ~ i \in [n]$ in the ball $B(x_k,r_{k+1})$ \textit{i.e.}   $C_{k}^{(i)} \cdot r_{k+1}^3 \geq \max_{x \in B(x_k,r_{k+1})} \left | f_i(x^T a_i) - \tilde f_i(x^T a_i) \right | ~ \forall ~ i \in [n]$ where $C_{k}^{(i)}$ is a positive real number. We have $C_k = \frac{1}{n} \sum_{i=1}^n C_{k}^{(i)}$. \\
As we have already defined for all $x$:
\begin{align*}
\left | \tilde{F}_{\lambda_{k+1},x_k}(x) - {F}_{\lambda_{k+1},x_k}(x)\right | \leq C_k  \| x - x_{k} \|^3 .
\end{align*}

So if $ \tilde{F}^s_{\lambda_{k+1},x_k}(x)$ is sampled by sensitivities with error parameter $\tilde{\epsilon}_{k+1}$ we have by triangle inequality:

\begin{align}\label{eq:FsAdditiveApprox}
\left | \tilde{F}^s_{\lambda_{k+1},x_k}(x) - {F}_{\lambda_{k+1},x_k}(x)\right | &\le \left | \tilde{F}^s_{\lambda_{k+1},x_k}(x) - \tilde {F}_{\lambda_{k+1},x_k}(x)\right | + \left | \tilde{F}_{\lambda_{k+1},x_k}(x) - {F}_{\lambda_{k+1},x_k}(x)\right |\nonumber\\
&\le  C_k  \| x - x_{k} \|^3  + \tilde{\epsilon}_{k+1} \tilde{F}_{\lambda_{k+1},x_k}(x)\nonumber\\
&\le C_k  \| x - x_{k} \|^3  + \frac{\tilde{\epsilon}_{k+1}}{1-\tilde \epsilon_{k+1}} \cdot \tilde{F}^s_{\lambda_{k+1},x_k}(x).
\end{align}

Hence, with very high probability, we do have :
\begin{align}
\begin{split} \label{eq:FsAdditiveApprox_2}
{F}_{\lambda_{k+1},x_k}(x) &\leq C_k  \| x - x_{k} \|^3  + \frac{\tilde{\epsilon}_{k+1}}{1-\tilde \epsilon_{k+1}} \cdot \tilde{F}^s_{\lambda_{k+1},x_k}(x) + \tilde{F}^s_{\lambda_{k+1},x_k}(x) \\
&= C_k  \| x - x_{k} \|^3 + \frac{1}{1-\tilde \epsilon_{k+1}} \tilde{F}^s_{\lambda_{k+1},x_k}(x)
\end{split}
\end{align}

Now, we would like to show that letting $x_k^s = \argmin_{x \in B(x_k,r_{k+1})} \tilde{F}_{\lambda_{k+1},x_k}^s(x)$,  the error can still be controlled. 

If, we let $ x_k^s$ be the minimizer of $\tilde{F}^s_{\lambda_{k+1},x_k}(x)$ and $x_{\lambda_{k+1},x_k}^\star$ be the minimizer of ${F}_{\lambda_{k+1},x_k}(x)$. We assume that $\frac{r_{k+1}}{c} \leq \| \tilde x_k^s - x_{k} \| \le r_{k+1}$ and $\| x^\star_{\lambda_{k+1},x_k} - x_{k} \| \le r_{k+1}$ for some positive real constant $c > 1$. We  have : 
  
\begin{align}
{F}_{\lambda_{k+1},x_k}( x_k^s) &\le \frac{1}{1-\tilde \epsilon_{k+1}} \tilde{F}^s_{\lambda_{k+1},x_k}(x) + {C_k}\|  x_k^s - x_{k} \|^3   \nonumber\\
&\le \frac{1}{1-\tilde \epsilon_{k+1}}  {F}^s_{\lambda_{k+1},x_k}(x_{\lambda_{k+1},x_k}^\star)  + {C_k}\|  x_k^s - x_{k} \|^3  \label{eq:camFs1}
\end{align}
where the second line follows the fact that $ x_k^s$ minimizes $\tilde{F}^s_{\lambda_{k+1},x_k}(x)$. 

Hence, if we set $\tilde{\epsilon}_{k+1} \le 1/2$ plugging back everything together:
\begin{align}
{F}_{\lambda_{k+1},x_k}( x_k^s) &\le (1+4 \tilde \epsilon_{k+1}) F_{\lambda_{k+1},x_k}^\star + 4 C_k r_{k+1}^3\label{eq:fsClean}.
\end{align}
where in the last line we use that both $\| \tilde x_k^s - x_{k} \| \le r_{k+1}$ and $\| x^\star_{\lambda_{k+1},x_k} - x_{k} \| \le r_{k+1}$.


We have from Lemma~\ref{lem:frostig} that:
\begin{align*}
F_{\lambda_{k+1},x_k}^\star \leq \frac{2\lambda_{k+1}}{\mu + 2\lambda_{k+1}} \left( F(x_k) - F^\star \right) + F^\star.
\end{align*}
Plugging this bound into \eqref{eq:fsClean} gives:
\begin{align}\label{tildeSBound}
{F}_{\lambda_{k+1},x_k}( x_k^s)  \le  \frac{(1+4\tilde \epsilon_{k+1})2\lambda_{k+1}}{\mu +2 \lambda_{k+1}} \left( F(x_k) - F^\star \right) + F^\star + 4C_k r_{k+1}^3.
\end{align}

Now consider if we make the update $x_k^s = x_{k+1}$. Then we have using the simple bound that $F(x) \le {F}_{\lambda_{k+1},x_k}(x)$ for all $x$:
\begin{align*}
{F}( x_{k+1}) &=  {F}_{\lambda_{k+1},x_k}(x_{k+1}) -{\lambda_{k+1}} \| x_{k+1} - x_k \|^2 \\ 
\Rightarrow ~{F}( x_{k+1}) &\le \frac{(1+4\tilde \epsilon_{k+1})2\lambda_{k+1}}{\mu + 2\lambda_{k+1}} \left( F(x_k) - F^\star \right) + F^\star + 4 C_k r_{k+1}^3 - {\lambda_{k+1}} \| x_{k+1} - x_k \|^2\\
&\le \frac{(1+4\tilde \epsilon_{k+1})2\lambda_{k+1}}{\mu + 2\lambda_{k+1}} \left( F(x_k) - F^\star \right) + F^\star + 4 C_k r_{k+1}^3 - \frac{\lambda_{k+1}}{c^2} r_{k+1}^2\\
\end{align*}
In the last line we have used $ \|  x_{k+1} - x_k \| \geq \frac{r_{k+1}}{c}$.
Now, we do want to choose our parameters such that the following holds for some positive constant $m>0$:
\begin{align}
4C_k r_{k+1}^3 - \frac{\lambda_{k+1}}{c^2} r_{k+1}^2 \leq \frac{2\lambda_{k+1}}{2\lambda_{k+1} + \mu} \frac{4\tilde{\epsilon}_{k+1}}{m} (F(x_k) - F^\star) + 4 \tilde{\epsilon}_{k+1} \left( 1+ \frac{1}{m} \right) F^\star \label{eq:sq_conv_condition1}
\end{align}
We provide the condition on $\lambda$ in the next lemma:

Now if the condition given in equation~\eqref{eq:sq_conv_condition1} holds then the following recursion holds:
\begin{align}
F(x_{k+1})  - F^\star \leq \left( 1+ 4\tilde \epsilon_{k+1}\left( 1+ \frac{1}{m} \right) \right) \frac{2\lambda_{k+1}}{2\lambda_{k+1} + \mu} (F(x_{k})  - F^\star) + 4\tilde \epsilon_{k+1}\left( 1+ \frac{1}{m} \right) F^\star \label{eq:sq_conv_recursion}
\end{align}
We can compare the recursion equations given in equations~\eqref{eq:sq_conv_recursion} and \eqref{eq:recursion_1}. If we choose $\epsilon_{k+1} = 2\tilde{\epsilon}_{k+1}\left( 1  + \frac{1}{m} \right)$, then we have:

\begin{align}
F(x_{k+1})  - F^\star \leq \left( 1+ 2 \epsilon_{k+1}  \right) \frac{2\lambda_{k+1}}{2\lambda_{k+1} + \mu} (F(x_{k})  - F^\star) + 2 \epsilon_{k+1} F^\star \label{eq:sq_conv_recursion}
\end{align}
which also confirms coreset conditions for the original function $F$.

\end{proof}

\begin{lemma}
For a given set of constants $C_k^{(i)} \geq |B_{x_k}^{(i)}(x)|, ~x \in B(x_k,r_{k+1})$ such that $C_k = \frac{1}{n}\sum_{i =1}^n C_k^{(i)}$,  and $\epsilon_k =\delta_k \frac{\mu}{2\lambda_k + \mu}$ for $\delta_k \in (0,1/2)$ $\text{ and } ~\forall~k\in [T]$,  we have ,
$$4C_k r_{k+1}^3 - \frac{\lambda}{c^2} r_{k+1}^2 \leq \frac{2\lambda_{k+1}}{2\lambda_{k+1} + \mu} \frac{4\tilde{\epsilon}_{k+1}}{m} (F(x_k) - F^\star) + 4 \tilde{\epsilon}_{k+1} \left( 1+ \frac{1}{m} \right) F^\star$$
is satisfied if for positive constants $c$ and $m$:
\begin{align*}
 2\lambda_{k+1} = \max\left( \sqrt{ \frac{ 4C_k\| \nabla F(x_k)\|^3}{\frac{1}{4c^2}  {\| \nabla F(x_k) \|^2} +  4 \tilde \delta_{k+1} \mu   \frac{F(x_k)}{3m} }   }- \mu , \mu \right).
\end{align*}
\end{lemma}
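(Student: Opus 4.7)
The plan is a direct algebraic argument. Starting from the target inequality, I would substitute the definition $r_{k+1}=\|\nabla F(x_k)\|/(2\lambda_{k+1}+\mu)$ and $\tilde\epsilon_{k+1}=\delta_{k+1}\mu/(2\lambda_{k+1}+\mu)$, move the negative term $-\frac{\lambda_{k+1}}{c^2}r_{k+1}^2$ to the right-hand side, and then clear denominators by multiplying through by $(2\lambda_{k+1}+\mu)^3$. After this, what remains to be established is
\begin{equation*}
4C_k\|\nabla F(x_k)\|^3 \;\le\; \frac{\lambda_{k+1}\|\nabla F(x_k)\|^2(2\lambda_{k+1}+\mu)}{c^2} + \frac{8\lambda_{k+1}\delta_{k+1}\mu(F(x_k)-F^\star)(2\lambda_{k+1}+\mu)}{m} + 4\delta_{k+1}\mu\Bigl(1+\tfrac{1}{m}\Bigr)F^\star(2\lambda_{k+1}+\mu)^2.
\end{equation*}

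The next step is to normalize the three terms on the right so they all carry the common factor $(2\lambda_{k+1}+\mu)^2$. Because the outer $\max$ in the formula guarantees $2\lambda_{k+1}\ge\mu$, we have $\lambda_{k+1}\ge (2\lambda_{k+1}+\mu)/4$, which lets us lower bound $\lambda_{k+1}(2\lambda_{k+1}+\mu)$ by $(2\lambda_{k+1}+\mu)^2/4$ in the first term and $8\lambda_{k+1}(2\lambda_{k+1}+\mu)$ by $2(2\lambda_{k+1}+\mu)^2$ in the second term. A sufficient condition thus becomes
\begin{equation*}
4C_k\|\nabla F(x_k)\|^3 \;\le\; (2\lambda_{k+1}+\mu)^2\left[\frac{\|\nabla F(x_k)\|^2}{4c^2} + \frac{2\delta_{k+1}\mu(F(x_k)-F^\star)}{m} + 4\delta_{k+1}\mu\Bigl(1+\tfrac{1}{m}\Bigr)F^\star\right].
\end{equation*}

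The third step is a simple comparison between the bracketed quantity above and the bracketed quantity $\tfrac{1}{4c^2}\|\nabla F(x_k)\|^2 + \tfrac{4\delta_{k+1}\mu F(x_k)}{3m}$ appearing in the stated formula. Writing $F(x_k)=(F(x_k)-F^\star)+F^\star$ and gathering coefficients, the difference of the two brackets equals $\tfrac{2\delta_{k+1}\mu}{3m}F(x_k) + \bigl(4+\tfrac{2}{m}\bigr)\delta_{k+1}\mu F^\star$, which is nonnegative since $F(x_k),F^\star\ge 0$. Therefore the sufficient condition of step~2 is implied by
\begin{equation*}
(2\lambda_{k+1}+\mu)^2 \;\ge\; \frac{4C_k\|\nabla F(x_k)\|^3}{\tfrac{1}{4c^2}\|\nabla F(x_k)\|^2 + \tfrac{4\delta_{k+1}\mu F(x_k)}{3m}},
\end{equation*}
and taking the square root, subtracting $\mu$, and enforcing $2\lambda_{k+1}\ge\mu$ via the outer maximum gives exactly the choice of $\lambda_{k+1}$ in the statement.

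The only subtle step is step~2, where one must verify that the lower bounds $\lambda_{k+1}\ge (2\lambda_{k+1}+\mu)/4$ are genuinely being consumed correctly; the rest is bookkeeping. I expect no real obstacle beyond keeping track of the constants $c,m,\mu,\delta_{k+1}$ through the substitution, and this is precisely what motivates the particular constants appearing in the denominator of the displayed formula for $2\lambda_{k+1}$.
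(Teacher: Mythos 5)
Your proposal is correct, and it reaches the paper's formula for $2\lambda_{k+1}$ by a genuinely different route. The paper first introduces an auxiliary threshold $\theta_{k+1}$ and splits into the two cases $F(x_k)\ge\theta_{k+1}F^\star$ and $F(x_k)\le\theta_{k+1}F^\star$; in each case it discards one of the two terms on the right-hand side, then chooses $\theta_{k+1}=1+(m+1)\frac{2\lambda_{k+1}+\mu}{2\lambda_{k+1}}$ so that the two resulting sufficient conditions coincide, and finally bounds $\theta_{k+1}\le 2m+3$ together with $m+1\ge\tfrac{1}{3}(2m+3)$ to produce the $\tfrac{F(x_k)}{3m}$ term. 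You instead keep both terms, clear denominators, normalize everything to the common factor $(2\lambda_{k+1}+\mu)^2$ using $2\lambda_{k+1}\ge\mu$ (which the outer $\max$ guarantees), and then verify by the decomposition $F(x_k)=(F(x_k)-F^\star)+F^\star$ that the resulting bracket dominates $\tfrac{1}{4c^2}\|\nabla F(x_k)\|^2+\tfrac{4\delta_{k+1}\mu F(x_k)}{3m}$ term by term; I checked the coefficient bookkeeping in your step~3 (the difference of brackets is $\tfrac{2\delta_{k+1}\mu}{3m}(F(x_k)-F^\star)+\bigl(4+\tfrac{8}{3m}\bigr)\delta_{k+1}\mu F^\star\ge 0$, which agrees with your expression) and all inequalities point in the right direction, using only $F(x_k)-F^\star\ge 0$ and $F^\star\ge 0$, both of which hold here since $F$ is nonnegative. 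Your argument is shorter and avoids the case analysis entirely, at the cost of being slightly less transparent about \emph{why} the constant $\tfrac{1}{3m}$ appears (in the paper it falls out of the choice of $\theta_{k+1}$; in your version it is just the smallest coefficient surviving the comparison). Both proofs are elementary and yield the identical sufficient condition $(2\lambda_{k+1}+\mu)^2\ge 4C_k\|\nabla F(x_k)\|^3\big/\bigl(\tfrac{1}{4c^2}\|\nabla F(x_k)\|^2+\tfrac{4\tilde\delta_{k+1}\mu F(x_k)}{3m}\bigr)$.
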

\begin{proof}
We need to ensude the following condition:
\begin{align}
4C_k r_{k+1}^3 - \frac{\lambda}{c^2} r_{k+1}^2 \leq \frac{2\lambda_{k+1}}{2\lambda_{k+1} + \mu} \frac{4\tilde{\epsilon}_{k+1}}{m} (F(x_k) - F^\star) + 4 \tilde{\epsilon}_{k+1} \left( 1+ \frac{1}{m} \right) F^\star \label{eq:quad_cond_2}
\end{align}
Let us assume that there exist a positive real number $\theta_{k+1}$.
\begin{itemize}
\item Consider the case when $F(x_k) \geq \theta_{k+1} F^\star$. Hence to ensure the condition given in equation~\eqref{eq:quad_cond_2}, we can just ensure that the following holds:
\begin{align}
4C_k r_{k+1}^3 - \frac{\lambda_{k+1}}{c^2} r_{k+1}^2 \leq  \frac{2\lambda_{k+1}}{2\lambda_{k+1} + \mu} \frac{4\tilde{\epsilon}_{k+1}}{m} \left( 1 - \frac{1}{\theta_{k+1}} \right)F(x_k)   \label{eq:sq_condition_1}
\end{align} 

\item Consider the case when $F(x_k) \leq \theta_{k+1} F^\star$. Then, to ensure the condition given in equation~\eqref{eq:quad_cond_2}, we can just ensure that the following holds:
\begin{align}
4C_k r_{k+1}^3 - \frac{\lambda_{k+1}}{c^2} r_{k+1}^2 \leq  4 \tilde{\epsilon}_{k+1} \left( 1+ \frac{1}{m} \right) \frac{F(x_k)}{\theta_{k+1}}  \label{eq:sq_condition_2}
\end{align} 
\end{itemize}
In equations~\eqref{eq:sq_condition_1} and \eqref{eq:sq_condition_2}, we use $\theta_{k+1} = 1 + (m+1) \frac{2\lambda_{k+1}+ \mu}{ 2\lambda_{k+1}}$ then we get the following condition to be satisfied:
\begin{align}
\begin{split} \label{eq:sq_condition_last}
&4C_k r_{k+1}^3 - \frac{\lambda_{k+1}}{c^2} r_{k+1}^2 \leq 4 \tilde{\epsilon}_{k+1}  \left(1 + \frac{1}{m}\right) \frac{F(x_k)}{\theta_{k+1}} \\
\Rightarrow ~ &4C_k r_{k+1}^3 \leq \frac{\lambda_{k+1}}{c^2} r_{k+1}^2 + 4 \tilde{\epsilon}_{k+1}  \left(1 + \frac{1}{m}\right) \frac{F(x_k)}{\theta_{k+1}}  \\
\Rightarrow ~ & 4C_k \frac{\| \nabla F(x_k)\|^3}{ (2\lambda_{k+1} + \mu)^3}  \leq \frac{1}{2c^2} \frac{2\lambda_{k+1}}{2\lambda_{k+1} + \mu} \frac{\| \nabla F(x_k) \|^2}{2\lambda_{k+1} + \mu } + 4 \tilde{\epsilon}_{k+1}  \left(1 + \frac{1}{m}\right) \frac{F(x_k)}{\theta_{k+1}} 
\end{split}
\end{align}
Now we assume that $2\lambda_{k} \geq \mu ~\forall~ k \Rightarrow \frac{2\lambda_{k}}{2\lambda_k +\mu} \geq \frac{1}{2}$ and $\tilde{\epsilon}_{k+1} = \tilde \delta_{k+1} \frac{\mu}{2\lambda_{k+1}+ \mu} $. Hence the condition given in the equation~\eqref{eq:sq_condition_last} is satisfied when:
\begin{align*}
4C_k \frac{\| \nabla F(x_k)\|^3}{ (2\lambda_{k+1} + \mu)^3} \leq  \frac{1}{4c^2}  \frac{\| \nabla F(x_k) \|^2}{2\lambda_{k+1} + \mu } + 4 \tilde \delta_{k+1} \frac{\mu}{2\lambda_{k+1}+ \mu }   \left(1 + \frac{1}{m}\right) \frac{F(x_k)}{\theta_{k+1}}  \\
\Rightarrow ~ 2\lambda_{k+1} + \mu \geq \sqrt{ \frac{ 4C_k\| \nabla F(x_k)\|^3}{\frac{1}{4c^2}  {\| \nabla F(x_k) \|^2} +  4 \tilde \delta_{k+1} \mu  \left(1 + \frac{1}{m}\right) \frac{F(x_k)}{\theta_{k+1}} }   }
\end{align*}
Now in the above equation we put the value of $\theta_{k+1} =  1  + (m+1) \frac{2\lambda_{k+1}+\mu}{2\lambda_{k+1}} \leq 2m+ 3$. We also use the fact that $m+1 \geq \frac{1}{3} (2m +3)$. That means the other conditions on $\lambda_{k+1}$ are satisfied when 
\begin{align*}
2\lambda_{k+1} + \mu \geq \sqrt{ \frac{ 4C_k\| \nabla F(x_k)\|^3}{\frac{1}{4c^2}  {\| \nabla F(x_k) \|^2} +  4 \tilde \delta_{k+1} \mu   \frac{F(x_k)}{3m} }   }
\end{align*}
Hence, given $$ 2\lambda_{k+1} = \max\left( \sqrt{ \frac{ 4C_k\| \nabla F(x_k)\|^3}{\frac{1}{4c^2}  {\| \nabla F(x_k) \|^2} +  4 \tilde \delta_{k+1} \mu   \frac{F(x_k)}{3m} }   }- \mu , \mu \right),$$
the conditions mentioned in the lemma are satisfied. 
\end{proof}

 \clearpage

\end{document}